\theoremstyle{plain}
\newtheorem{theorem}{Theorem}
\newtheorem{lemma}[theorem]{Lemma}
\newtheorem{proposition}[theorem]{Proposition}
\newtheorem{assum}{Assumption}
\newtheorem{definition}{Definition}
\theoremstyle{nonumberplain}
\theoremstyle{plain}
\newtheorem{remark}{Remark}
\theoremstyle{plain}
\theoremstyle{nonumberplain}
\newtheorem{proof}{Proof}
\newlength\fheight
\newlength\fwidth
\newcommand{\abs}[1]{\left| #1 \right|}
\newcommand{\norm}[1]{\| #1 \|}
\newcommand{\inn}[2]{\langle #1,#2 \rangle}
\newcommand{\lrbrace}[1]{\left\{ #1 \right\}}
\newcommand{\normi}[1]{{\left\vert\kern-0.25ex\left\vert\kern-0.25ex\left\vert #1 
		\right\vert\kern-0.25ex\right\vert\kern-0.25ex\right\vert}}
\newcommand{\inni}[2]{{\langle\kern-0.25ex\langle #1,#2
		\rangle\kern-0.25ex\rangle}}
\def\X{\mathcal{X}}
\def\T{ \mathrm{T} }								% T - transpose
\def\nat{ \mathbb{N} }								% set of natural numbers
\def\real{ \mathbb{R} }								% real numbers
\def\Erw{ \mathbb{E} }
\def\Prob{ \mathbb{P} }								% unit disk
\def\V{\mathcal{V}}
\def\X{\mathcal{X}}
\def\rmh{\mathrm{h}}
\def\bx{ \mathbold{x} }	
\def\bv{ \mathbf{v} }
\def\bg{ \mathbf{g} }
\def\ru{ \mathrm{u} }
\def\bM{ \mathbold{M} }
\def\bz{ \mathbold{z} }
\def\bfX{ \mathbf{X} }
\def\bfY{ \mathbf{Y} }
\def\bfh{ \mathbf{h} }
\def\boldp{ \mathbold{p} }
\def\boldlambda{ \mathbold{\lambda} }
\def\boldR{ \mathbold{R} }
\def\boldS{ \mathbold{S} }
\def\calA{ \mathcal{A} }
\def\bfA{ \mathbf{A} }
\def\bfQ{ \mathbf{Q} }
\def\bfC{ \mathbf{C} }
\def\boldX{ \mathbold{X} }
\def\boldZ{ \mathbold{Z} }
\def\boldY{ \mathbold{Y} }
\def\bfPi{ \mathbf{\Pi} }
\def\boldLambda{ \mathbold{\Lambda} }
\def\boldGamma{ \mathbold{\Gamma} }
\def\rmF{ \mathrm{F} }	
\def\bfPhi{ \mathbf{\Phi} }								% bold x
\def\by{  \mathbold{y} }									% bold y
\DeclareMathOperator{\VI}{\textnormal{VI}}
\DeclareMathOperator{\SOL}{\textnormal{SOL}}
\DeclareMathOperator{\GNE}{\text{GNE}}
\DeclareMathOperator{\CVio}{\textnormal{CVio}}
\DeclareMathOperator*{\argmax}{arg\,max}
\newcommand\restr[2]{{% we make the whole thing an ordinary symbol
		\left.\kern-\nulldelimiterspace % automatically resize the bar with \right
		#1 % the function
		%\vphantom{\big|} % pretend it's a little taller at normal size
		\right|_{#2} % this is the delimiter
}}
\def\Z{\mathcal{Z}}
\title{Coordinated Online Learning for Multi-Agent Systems with Coupled Constraints and Perturbed Utility Observations}
\author{Ezra~Tampubolon,~\IEEEmembership{Student~Member,~IEEE.} and Holger~Boche,~\IEEEmembership{Fellow,~IEEE,} 
	\thanks{Holger Boche and Ezra~Tampubolon are with the Technische Universit\"at M\"unchen,
		Lehrstuhl f\"ur Theoretische Informationstechnik, Germany. E-mail: boche@tum.de, ezra.tampubolon@tum.de. Additionally, H.Boche is with the CASA – Cyber Security in the Age of Large-Scale Adversaries – Excellenzcluster,
		Ruhr Universit\"at Bochum, Bochum, Germany.
		
		E. Tampubolon was supported by the Deutsche Forschungsgemeinschaft (DFG,
		German Research Foundation) under Grant Bo 1734/22-1 and Grant Bo 1734/24-1. H. Boche was supported by the Deutsche Forschungsgemeinschaft (DFG, German Research Foundation) under Germany’s Excellence Strategy - EXC 2092 CASA - 390781972 and in part by the DFG within the Gottfried Wilhelm Leibniz Prize under Grant BO 1734/20-1.
}}
\begin{document}

\maketitle

\begin{abstract}
Competitive non-cooperative online decision-making agents whose actions increase congestion of scarce resources constitute a model for widespread modern large-scale applications. To ensure sustainable resource behavior, we introduce a novel method to steer the agents toward a stable population state, fulfilling the given coupled resource constraints. The proposed method is a decentralized resource pricing method based on the resource loads resulting from the augmentation of the game’s Lagrangian. Assuming that the online learning agents have only noisy first-order utility feedback, we show that for a polynomially decaying agents’ step size/learning rate, the population’s dynamic will almost surely converge to generalized Nash equilibrium. A particular consequence of the latter is the fulfillment of resource constraints in the asymptotic limit. Moreover, we investigate the finite-time quality of the proposed algorithm by giving a non-asymptotic time decaying bound for the expected amount of resource constraint violation.
\end{abstract}

\begin{IEEEkeywords}
Machine Learning, Game Theory, Constrained Control, Agents and Autonomous Systems
\end{IEEEkeywords}

\section{Introduction}
In a vast number of real-world applications, such as smart grid \cite{Mohsenian2010,Saad2012,Li2016,Ma2013,Parise2014,Ma2016,Grammatico20162}, competitive market \cite{Li2015}, and network management \cite{Barrera2015}, the modeling of system participants as competitive selfish rational agents has become popular and has led to fruitful discussions about system design. Likewise, such modeling is suitable for large-scale real-time systems, occurring in world-changing technologies, such as IoT, 5G, and the smart industry. In fact, the competitiveness between the participants in such systems arises from the lack of communication and agreement due to the inherent complexity and the requirements on both the stringent latency and the high-flexibility. 
\paragraph*{Game Theory} The concept of game theory is suitable for describing, analyzing, and forecasting the systems above's behavior. The reason is that this concept considers a set of entities/agents that behave rationally by maximizing their yield and, therefore, closely resemble the participants of a smart large-scale system.
%, or equivalently, to minimize their loss by choosing appropriate actions
Besides, the game theory assumes that the agents are non-cooperative in the sense that one agent's strategy is not visible to the others so that the aspects mentioned in the previous paragraph are respected.
Moreover, it models the aspect of openness of modern systems by assuming that one agent's yield depends not only on her action but also on others' actions. 

One of the main goals of game theory is to predict, in a repeated setting and for an individual optimizing behavior of the agents, the long-term behavior of the population \cite{Sandholm2012,Newton2018}. In this direction, the central sub-concept is the so-called \textit{Nash equilibrium (NE)} \cite{Nash1951}, which denotes a stable state where no agent is interested in deviating from her strategy. 

\paragraph*{Online Learning in a Competitive Environment} In a repeated game, a single rational agent faces sequential decision-making in an unknown environment. A reasonable assumption from the perspective of machine learning is that she applies the so-called \textit{no-regret policy} (see, e.g., \cite{Shalev-Shwartz2012,Bub2012,Belmaga2018}). This policy asserts that the agent endeavors to choose actions such that the \textit{regret}, i.e., the cumulative difference between the instantaneous yields and the corresponding highest possible yields, grows slower than the number of rounds.
 The canonical class of no-regret policies in an unknown environment is the so-called \textit{online mirror descent (OMD)}. 
 
 The OMD is the online extension of the celebrated mirror descent algorithm \cite{Nesterov2009}. Correspondingly, OMD consists of the gradient step, providing the direction of the steepest decay/increase of the loss/payoff, and the "mirror" step, mapping the latter back to the region of the feasible actions. One advantage of the OMD is that it constitutes a generalization of the online version of the projected gradient descent and provides therefore a richer model for the online decision-making process. Another advantage of OMD is that its performance has a weak influence on the dimension of the underlying decision space provided that the mirror step is chosen appropriately (see e.g., \cite{Nemirovski2008}).

\paragraph*{Resource Constraints}
In widespread practical applications, the action of the agents causes the utilization of specific limited resources. For example, in network applications, the user's (agents) choice of data transfer paths (strategy) increases the congestion of specific links and routers (resources) with limited capacity.
Also, a similar scenario is observable in electric mobility \cite{Ma2013}, fog networking (see e.g. \cite{Chen2018}), and wireless communication \cite{Scutari2012}.

In applications where agents' action is coupled with the resources' congestion, the system designer and manager have to deal with the danger of resource overload due to agents' egoistic behavior. The latter is because the state of resources' overutilization can cause immense degradation of the overall system performance (see, e.g., the problem of congestion collapse in networked system \cite{Abbas2016}), and also, negative environmental issues (e.g., caused by high CO2 emissions of electrical energy driven resources). Another example of events justifying the importance resource sustainability aspect in a system of egoistic optimizing agents is the flash crash in US financial markets due to fully automated computerized trading (see, e.g., \cite{Borch2016}).      

\paragraph*{Problem Description}
This work addresses the problem of controlling egoistic online learning agents, so that in the long-term, the population's action converges to a stable state fulfilling the resource constraints.
A challenge associated with this issue is to design a decentralized congestion control method that does not provide direct commands to each agent by a centralized instance. Moreover, it is desired that the control method demands as little information about agents' characteristics as possible. The reason for requiring those properties is that methods contrary to those would need, e.g., if the number of agents is massive, exceptionally high computational power for the information processing and policy generation. Furthermore, such methods would be inflexible for the possible exit and entrance of new agents and, therefore, unsuitable for modern systems such as IoT.

\paragraph*{Our Contributions}  
Our main contribution is a novel method that solves the above control problem.
 Its core is a resource pricing method, aiming to give both, incentives (rather than direct commands) to all agents for acting sustainably, and stability for the population's state. Our pricing method requires only the current congestion state of the resources and not the agents' specific characteristics. Moreover, it is done by the resources themselves, rather than by a centralized instance.  

Assuming for simplicity that the feedback noise is persistent, we give the following results ($n$ denotes time):
\begin{itemize}
\item 
If the agents' step size $\gamma_{n}$ is of order $\Theta(n^{-p})$ where $p\in (1/2,1]$, we ensure
the almost sure fulfillment of the resource constraints in the asymptotic limit by showing
the almost sure convergence of the population iterate to a (variational) NE of the corresponding game underlying coupled resource constraints.
\item For the case where the agents' step size sequence is of order $\gamma_{n}=\Theta(n^{-1/2})$, we provide a time-decaying non-asymptotic upper bound of order $\mathcal{O}(\ln^{3/2}(n)/\sqrt{n})$ for the expected resource constraints' violation caused by the ergodic average of the population's action.
\item We show that the ergodic average of the population's iterate almost surely fulfills the resource constraints in the asymptotic limit for a large class of decaying step size sequences of order $\gamma_{n}=\Theta(n^{-p})$, where $p\in(0,1]$.
\end{itemize}

\subsection*{Relation to Prior Works}

\paragraph{Learning in Games}
Our work is related to the works investigating the dynamic of agents in a competitive setting. In particular, among them are those closely related to ours, which generate long-term results with different agent types. The latter includes no-learning agents, e.g., greedy agents with best-response dynamics, and learning agents, e.g., the fictitious playing, the gradient playing, and the farsighted reinforcement learning.
For a comprehensive review of the literature on those topics, we refer to \cite{Fudenberg1998,Zhang2019}.

 In this work, we focus on online learning agents applying the canonical mirror descent algorithm \cite{Nemirovski2008,Nesterov2009}. Therefore, the closest work to ours is \cite{Mertikopoulos2018} (along with several extensions such as \cite{DuvocellaMertikopoulos2018,ZhouMertikopoulos2018}). However, our work is in contrast to \cite{Mertikopoulos2018}, since we aim to control competitive online learning agents respective to coupled resource constraints, rather than to predict the long-term outcome. For this reason, the admissible set of population strategy profiles is not necessarily of product structure. Consequently, we have to modify the decentralized algorithm given in \cite{Mertikopoulos2018} and make use of coordinators to handle such inter-agent constraints. Another contrast to \cite{Mertikopoulos2018}, is that our technique relies directly on the Martingale Convergence Theorem, while in \cite{Mertikopoulos2018}, involved methods using continuous interpolation is used to show the convergence of the given discrete-time algorithm.

\paragraph{Generalized Nash equilibrium and Coupled Constraints} As we consider a non-cooperative game with coupled constraints having the generalized Nash equilibrium (GNE) as the central concept, we mention in the following some related works on this topic. Of interest is the subclass of the variational Nash equilibrium (VNE), defined as the solution of the well-known concept of the variational inequality (VI) \cite{Facchinei1}. Several characterizations of the Nash equilibrium of games with coupled constraints have been made in the works \cite{Facchinei1,Facchinei2007,Pavel2007,Yin2011,Kulkarni2012,Arslan2012}, leveraging from the Lagrangian duality theory. Our work does not overlap with these works, as our emphasis is not on analyzing the GNE. In this regard, other works worth mentioning are those investigating the deviation of GNE from the population's welfare, i.e., the efficiency of GNE.
On the efficiency of NE, there is extensive literature (e.g., \cite{Johari2004,Roughgarden2004,Papadimitriou2001}). 

However, relevant to our work is the very recent analysis given in \cite{Kulkarni2019}. The point there essential for us to is the hint that the VNE might be efficient, while the GNE might be arbitrarily inefficient. So a method converging to a VNE might not only support resource sustainability but also increase the population's welfare. In our numerical simulation, we also observe this effect.

\paragraph{Nash Equilibrium Findings}Besides controlling online learning agents, our method is also suitable for the Nash equilibrium finding. There is a large body of literature considering this problem. Reviewing all of them is beyond the scope of this work. Thus we concentrate on those that consider a similar setting as ours, i.e., the game with coupled constraints. 
	Most of the existing works in this direction, such as \cite{Yin2011,Paccagnan2017,Grammatico2017}, propose a primal-dual algorithm based on the fixed-point methods for finding the solution of a variational inequality (see, e.g., Chapter 12 in \cite{Facchinei1}), resulting in a Euclidean-projection based algorithm. 
In contrast, our method uses the mirror map, which constitutes a generalization of the Euclidean projection. For this reason, the performance of our method has a much weaker dimension dependence. 
However, a problem with utilizing the mirror map is that we cannot use the usual convergence proof via a fixed-point approach for variational inequality.

Other essential differences between our algorithm and the first-order algorithms for finding GNE of a game with coupled constraints are that they mostly use a constant step size, do not consider the possibility of noise in the feedback information, and do not have a non-asymptotic guarantee of the violation of the resource constraints.

Also worth mentioning are works that consider the payoff-based approach (see, e.g., \cite{Marden2009,Tatarenko2019}), where each agent can only observe its obtained payoffs. Such an approach is important for some applications (see e.g., \cite{Marden2013,Zhu2013}). In contrast, our work assumes that the agents have a gradient observation of their utility. Nevertheless, investigating a method for gradient feedback constitutes a cornerstone for a payoff-based approach. Therefore, we expect that from our work, one can generate a control-method via pricing for resource sustainability for online learning payoff-based agents. 

\paragraph{Resource Congestion Control} 
Lastly, we mention that the problem of alleviation of resource congestion can also be combined with other objectives, such as maximizing the population's welfare. This practice is common, e.g., in network/internet congestion control \cite{Srikant2004,Low1999} and control theory \cite{Paganini2005}. However, most of the algorithms for fulfilling such an extended task require a high degree of control of the agents or specific information about the agents (e.g., their utilities). In contrast, our method only assumes that the agents are online learners. Moreover, it is based on a pricing instrument using only the resource congestion state. However, the cost we pay is that we cannot guarantee (theoretically) performance gains other than the alleviation of resource congestion. Nevertheless, we can see in simulation, an additional gain of the population's wealth (see Section \ref{Sec:NumExp}).    
\subsection*{Paper Organization}
 The structure of our work is as follows. In Section \ref{Sec:Model}, we set up the multi-agent setting of our consideration by introducing the underlying game and the notion of coupled (resource) constraints (Subsection \ref{Subsec:Game}). Besides, we introduce in this section our agents' learning model (Subsection \ref{Subsec:aaksjsjshsshsgggshssss}) which corresponds to the descent method based on the concept of mirror map (Subsection \ref{Subsec:aiaissjshhsjshsss}).
 
 In Section \ref{Sec:aiishhhsggdgdgdhhsjjshhssss}, we propose the desired pricing algorithm that leads the online learning agent to a resource sustainable state. Therein, we also provide intuitions leading to the design of this method.
 
 The remaining sections are devoted to the analysis of our proposed method structured as follows. In Section \ref{Sec:aooaahhsjsshhddjjddd}, we show that our method ensures resource sustainability in the asymptotic region by showing the convergence of the population's iterate to a stable set satisfying coupled constraints.

  In Section \ref{Sec:aiaiishshggsshgshsss}, we quantify the extent of coupled constraint violations reduction achievable by our method. Specifically, we derive a time-decaying bound for the constraint violation caused by the price-controlled online learning population. In Section \ref{Sec:iaaiiaahsgshgshhsssss}, we close the gap between parameter choices provided in Sections \ref{Sec:aooaahhsjsshhddjjddd} and \ref{Sec:aiaiishshggsshgshsss} and derive the convergence of the ergodic average of the population's iterate to the stable state of the interest. Lastly, the final section (Section \ref{Sec:NumExp}) is devoted to practical simulations. There, we not only support our theoretical findings, but also show that our method is not too conservative, since it can also ensure the increase in the population's wealth. Furthermore, we compare our method with state of the art methods and show that ours may outperform them.

\subsection*{Basic Notations}
For $n\in\nat_{0}$, $[n]$ (resp. $[n]_{0}$) denotes the set of integers between $1$ (resp. $0$) and $n$. We use boldface letters to distinguish between vectors (or sequences) and scalars, and between vector-valued and scalar-valued functions. Upright letters stand for functions and matrices. We write a random variable by capital letter, but a capital upright letter stands for matrices. Given a vector/sequence $\bx$, $\bx_{r}$ denotes the $r$-th member of $\bx$. We use the same notation for the vector-valued function and the random vector/sequence. In case we have a sequence of vectors $\boldLambda$, we denote $\boldLambda_{n}^{r}$ as the $r$-th entry of the $n$-th member.

In this work, we always consider the usual \textit{Euclidean space} $\V:=(\real^{D},\norm{\cdot})$ equipped with a \textit{norm} $\norm{\cdot}$. For a convex subset $\calA\subseteq\real^{D}$, $\text{relint}(A)$ denotes the \textit{relative interior} of $\calA$, and $\norm{A}$ denotes the \textit{diameter} of $\calA$, i.e., the supremum of $\norm{\bx-\by}$ over all $\bx,\by\in \calA$. The \textit{Euclidean projection} onto a closed convex subset $\calA$ of $\real^{D}$ is denoted by $\bfPi_{A}$. The \textit{dual norm} of $\norm{\cdot}$ is denoted by $\norm{\cdot}_{*}$. $\V^{*}=(\real^{D},\norm{\cdot}_{*})$ denotes the \textit{dual space} of the Euclidean normed space $\V$. 

$\bg:\real^{D}\rightarrow\real^{D}$ is said to be \textit{Lipschitz continuous} on an a non-empty subset $\Z\subset (\real^{D},\norm{\cdot})$ with constant $L>0$ if $\norm{\bg(\bx)-\bg(\bz)}_{*}\leq L\norm{\bx-\bz}$, $\forall \bx,\bz\in\mathcal{Z}$. $\bg$ is said to be \textit{monotone} on $\Z$ if $\inn{\bx_{1}-\bx_{2}}{\bg(\bx_{1})-\bg(\bx_{2})}\leq 0$, for all $\bx_{1},\bx_{2}\in \mathcal{Z}$. If in the latter, strict inequality holds for $\bx_{1}\neq \bx_{2}$, then $\bg$ is said to be \textit{strictly monotone}. 

 In this work we assume that a \textit{probability space} $(\Omega,\Sigma,\Prob)$, and a \textit{filtration} $\mathbb{F}:=(\mathcal{F}_{n})_{n\in\nat_{0}}$ therein. By the abbreviation a.s. we mean almost sure w.r.t. this probability space. 
 For ease of notation, we denote for each $n\in\nat_{0}$, the conditional expectation $\Erw[\cdot|\mathcal{F}_{n}]$ given $\mathcal{F}_{n}$, simply by $\Erw_{n}[\cdot]$. Let $(\bM_{n})_{n}$ be a sequence of random variables taking values on a normed Euclidean space $(\real^{D},\norm{\cdot})$. We say $(\bM_{n})_{n}$ is a \textit{(resp. super-,sub-)martingale} if $(\bM_{n})_{n}$ is \textit{adapted}, i.e., $\bM_{n}$ is $\mathcal{F}_{n}$-measureable for all $n$, $(\bM_{n})_{n}$ is \textit{integrable}, i.e., $\Erw[\norm{\bM_{n}}]<\infty$ for all $n$, and  $\Erw_{n}[\bM_{n+1}]=\bM_{n}$ (resp. $\leq$, $\geq$, instead of $=$) for all $n$.
\section{Model Description and Preliminaries}
\label{Sec:Model}
In this section, we formalize the framework of our consideration, i.e., the setting of agents competing for scarce resources. Explicitly, we introduce the notion of non-cooperative game underlying coupled resource constraints. Moreover, we introduce the so-called mirror map, which provides a model of how a selfish online learning agent realizes her decision from the first-order feedback. 
\subsection{Continuous Games with Coupled Constraints}
\label{Subsec:Game}
\paragraph*{Continuous Non-Cooperative Game}Throughout this work, we consider a finite set $[N]$ of agents playing a (repeated) \emph{non-cooperative game (NG)} $\Gamma$. During the NG, every agent $i\in[N]$ chooses and applies an action/strategy $\bx^{(i)}$ from a non-empty compact convex subset $\X_{i}$ of a finite-dimensional normed space $\mathcal{V}_{i}=(\real^{D_{i}},\norm{\cdot}_{i})$. This process results in joint action/strategy-profile $\bx=(\bx^{(1)},\ldots,\bx^{(N)})\in\X:=\prod_{i=1}^{N}\X_{i}\in\real^{D}$, where $D:=\sum_{i=1}^{N}D_{i}$. In order to highlight the action of player $i$, we write $\bx=(\bx^{(i)},\bx^{(-i)})$, where $\bx^{(-i)}=(\bx^{(j)})_{j\neq i}\in \mathcal{X}_{-i}:=\prod_{j\neq i}\X_{j}$. Working with the whole population, it is advantageous to consider the Euclidean normed space $\V:=(\prod_{i=1}^{N}\real^{D_{i}},\norm{\cdot})$, where $\norm{\bx}^{2}:=\sum_{i}\norm{\bx^{(i)}}^{2}_{i}$. Now, suppose that the population action at time $t$ is $\bx_{t}\in\mathcal{X}$. The payoff/reward agent $i$ received after $\bx_{t}$ is given by $\ru_{i}(\bx_{t}^{(i)},\bx_{t}^{(-i)})$, where $\ru_{i}:\X\rightarrow\real$ is the utility function of the  $i$. Throughout this paper, we assume the following standard regularity condition for the utility functions:
\begin{assum} 
	\label{Ass:aiishhfggfjhhdjjdhhdjjddd}
	For all $i\in [N]$ and $\bx^{(-i)}\in \X_{-i}$,
	$\ru_{i}((\cdot),\bx^{(-i)})$ is %(pseudo-)concave 
	concave  and $\bv:=(\bv^{(1)},\ldots,\bv^{(N)})$ is continuous where: $$\bv^{(i)}(\bx):=\nabla_{\bx^{(i)}}\ru_{i}(\bx),\quad i\in[N].$$
\end{assum}
\paragraph*{Coupled Resource Constraints} 
For a certain number $R>0$ of resources, we model the relation between agents' action and resource utilization by a function $\bg:\real^{D}\rightarrow\real^{R}$.
Throughout this paper, we assume that the function $\bg$ is subject to the following conditions:
\begin{assum} 
	\label{Ass:aiishhfggfjhhdjjdhhdjjddd2}
	For all $r\in [R]$, $\bg_{r}:\mathcal{X}\rightarrow\real$ is convex and differentiable, and the Jacobian matrix $\nabla \bg$ of $\bg$ is continuous.
\end{assum}
One may interpret the term $\bg_{r}(\bx)$ as the overload/congestion state of the resource $r\in [R]$ caused by the population action $\bx$. Since from an operational and sustainability point of view, overload has to be kept low and even avoided, it is desired that the population strategy is contained in $\mathcal{Q}:=\mathcal{C}\cap\mathcal{X}$, where $\mathcal{C}:=\{\bg(\bx)\leq 0\}$ denotes the resource constraints. For the feasibility of this goal, we assume that $\mathcal{C}$ is non-empty.
The following regularity condition on $\mathcal{Q}$ is useful for later purpose:
\begin{assum}[Slater's condition]
	\label{Eq:aioaoosjjshhdjjddss}
	There exists a point $\bx_{*}$ in $\textnormal{relint}{(\X)}$  %$x_{*}\in\textnormal{relint}{(\X)}$ 
	such that $\bg(\bx_{*})< 0$, where $\text{relint}(\X)$ denotes the relative interior of $\X$.
\end{assum}
Notice that the compliance of the constraint $\bx\in\mathcal{C}$ depends not only on the strategy of a single agent, but also on the whole population. For this reason, one refers to this constraint as \textit{coupled constraint}. Correspondingly, NG subject to coupled inequality constraints $\mathcal{C}$, is also called NG with coupled constraints (NGCC). The set of the feasible strategy of the player $i$ given a joint action $\bx^{(-i)}$ of other agents is denoted by $\mathcal{Q}^{(i)}(\bx^{(-i)}):=\{\bx^{(i)}\in\mathcal{X}_{i}:~\bg(\bx)\leq 0\}$. 
 
\paragraph*{Generalized Nash Equilibrium (GNE)}
One of the central concepts in game theory is the so-called \textit{Nash equilibrium}. Specifically, NE denotes a feasible strategy profile for which no agent can improve her reward by unilaterally deviating from her strategy. For general games underlying coupled constraints, we formally define this notion as follows (see, e.g., \cite{Facchinei2007}):  
\begin{definition}[Generalized Nash Equilibrium (GNE)]
Given a NGCC $\Gamma$ and a resource constraint $\mathcal{C}$, $\bx_{*}\in\mathcal{Q}$ is said to be a (generalized) Nash equilibrium of $\Gamma$ with $\mathcal{C}$ if for each $i\in[N]$:
\begin{equation*}
\ru_{i}(\bx^{(i)}_{*},\bx^{(-i)}_{*})\geq \ru_{i}(\bx^{(i)},\bx^{(-i)}_{*}),\quad \forall \bx^{(i)}\in\mathcal{Q}(\bx^{(-i)}_{*}).
\end{equation*} 
The set of all generalized Nash equilibrium of $\Gamma$ is denoted by $\GNE(\Gamma)$
\end{definition}

\subsection{Mirror Map and Fenchel Coupling}
\label{Subsec:aiaissjshhsjshsss}
Throughout this work, we assume that the agents use first-order information of their utility function to make a decision. However, the first-order information is not necessarily a feasible strategy. For this reason, we specify in the following the class of maps helping an agent realizing her decision from the aforementioned information:
\begin{definition}[Regularizer/penalty function and Mirror Map]
Let $\Z$ be a compact convex subset of a Euclidean normed space $\V$, and $K>0$. We say $\uppsi:\Z\rightarrow\mathbb{R}$ is a $K$-strongly convex \textit{regularizer} (or \textit{penalty function}) on $\Z$, if $\uppsi$ is continuous and $K$-strongly convex on $\Z$, i.e., for all $\bx,\by\in \mathcal{Z}$ and $\lambda\in [0,1]$:
\begin{equation*}
\uppsi(\lambda \bx+(1-\lambda)\by)\leq \lambda \uppsi(\bx)+(1-\lambda)\uppsi(\by)-\tfrac{K}{2}\lambda (1-\lambda)\norm{\bx-\by}^{2}.
\end{equation*}
The mirror map $\bfPhi:\V^{*}\rightarrow\Z$ induced by $\uppsi$ is defined by:
$$\bfPhi(\by):=\argmax_{\bx\in\Z}\left\{\left\langle \by,\bx\right\rangle-\uppsi(\bx)\right\}.$$
\end{definition}
The following proposition, which is folklore in convex analysis, gives some basic properties of the mirror map:
\begin{proposition}
\label{Prop:aiaishshjfggfhdhddd}
Let $\uppsi$ be a $K$-strongly convex regularizer on a compact convex subset $\mathcal{Z}$ of a Euclidean normed space $\mathcal{V}$ inducing the mirror map $\bfPhi:\V^{*}\rightarrow\Z$, and let $\uppsi^{*}:\V^{*}\rightarrow\real$, $\by\mapsto\max_{\bx\in\Z}\left\{\left\langle \bx,\by\right\rangle-\uppsi(\bx)\right\}$ be the convex conjugate of $\uppsi$. Then:
\begin{enumerate}
\item $\bx=\bfPhi(\by)$ if and only if $\by\in\partial \uppsi(\bx)$. In particular $\text{im}(\bfPhi)=\text{dom}(\partial \uppsi)\supseteq \text{relint}(\Z)$.
\item $\uppsi^{*}$ is differentiable on $\V^{*}$ and $\nabla \uppsi^{*}(\by)=\bfPhi(\by)$.
\item $\bfPhi$ is $(1/K)$-Lipschitz continuous,
\end{enumerate}
where $\partial \uppsi(\bx)$ denotes the subgradient of $\uppsi$ at $\bx$.
\end{proposition}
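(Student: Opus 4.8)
The plan is to treat the three claims as a single chain resting on one workhorse fact—the equivalence in item~(1)—and to extract items~(2) and~(3) from it together with standard Fenchel duality and the strong convexity of $\uppsi$. Before anything else I would record that $\bfPhi$ is genuinely single-valued: since $\uppsi$ is $K$-strongly convex it is strictly convex, so $\bx\mapsto\inn{\by}{\bx}-\uppsi(\bx)$ is strictly concave and, being continuous on the nonempty compact set $\Z$, attains its maximum at a unique point. This is what makes $\bfPhi(\by)$ well defined for every $\by\in\V^{*}$.

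For item~(1) I would invoke Fermat's rule. Extending $\uppsi$ by $+\infty$ outside $\Z$ turns it into a proper, lower semicontinuous (by continuity on the closed set $\Z$) convex function on $\V$. Then $\bx=\bfPhi(\by)$ means exactly that $\bx$ minimizes $\uppsi(\cdot)-\inn{\by}{\cdot}$ over $\V$, which by the subdifferential optimality condition is equivalent to $0\in\partial\uppsi(\bx)-\by$, i.e.\ $\by\in\partial\uppsi(\bx)$. The ``in particular'' statement is then bookkeeping: $\bx\in\text{im}(\bfPhi)$ iff some $\by$ satisfies $\by\in\partial\uppsi(\bx)$ iff $\partial\uppsi(\bx)\neq\emptyset$, so $\text{im}(\bfPhi)=\text{dom}(\partial\uppsi)$; and $\text{dom}(\partial\uppsi)\supseteq\text{relint}(\Z)$ is the classical fact that a proper convex function has a nonempty subdifferential throughout the relative interior of its domain (here $\text{dom}(\uppsi)=\Z$).

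Item~(2) I would obtain from~(1) via Fenchel--Young inversion. Since $\uppsi$ is proper, lsc and convex, one has $\by\in\partial\uppsi(\bx)\iff\bx\in\partial\uppsi^{*}(\by)$. Combined with~(1) this gives $\partial\uppsi^{*}(\by)=\{\bx:\by\in\partial\uppsi(\bx)\}=\{\bfPhi(\by)\}$, a singleton. As $\uppsi^{*}$ is finite-valued and convex on $\V^{*}$ (a supremum of affine functions, finite by compactness of $\Z$), a singleton subdifferential at a point forces differentiability there with gradient equal to that unique subgradient; hence $\uppsi^{*}$ is differentiable on all of $\V^{*}$ with $\nabla\uppsi^{*}(\by)=\bfPhi(\by)$. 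For item~(3) I would again use~(1): writing $\bx_{i}=\bfPhi(\by_{i})$ gives $\by_{i}\in\partial\uppsi(\bx_{i})$. The key auxiliary step is to upgrade $K$-strong convexity to strong monotonicity of $\partial\uppsi$, namely $\inn{\boldp_{1}-\boldp_{2}}{\bx_{1}-\bx_{2}}\geq K\norm{\bx_{1}-\bx_{2}}^{2}$ for subgradients $\boldp_{i}\in\partial\uppsi(\bx_{i})$; this follows by first deriving the quadratic subgradient inequality $\uppsi(\by)\geq\uppsi(\bx)+\inn{\boldp}{\by-\bx}+\tfrac{K}{2}\norm{\by-\bx}^{2}$ from the defining inequality (rearrange, divide by $\lambda$, let $\lambda\to0^{+}$) and then summing the two instances obtained by swapping the points. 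Applying this with $\boldp_{i}=\by_{i}$ and bounding the left-hand side by Cauchy--Schwarz in the dual pairing, $\inn{\by_{1}-\by_{2}}{\bx_{1}-\bx_{2}}\leq\norm{\by_{1}-\by_{2}}_{*}\norm{\bx_{1}-\bx_{2}}$, yields $K\norm{\bx_{1}-\bx_{2}}^{2}\leq\norm{\by_{1}-\by_{2}}_{*}\norm{\bx_{1}-\bx_{2}}$, i.e.\ the desired $(1/K)$-Lipschitz bound.

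The routine-but-delicate part—and the one I would treat most carefully—is the derivation of strong monotonicity of $\partial\uppsi$ from the function-value definition of strong convexity, keeping the general (non-Hilbertian) norm $\norm{\cdot}$ and its dual $\norm{\cdot}_{*}$ straight, since the strong-convexity bonus is measured in $\norm{\cdot}$ while the pairing $\inn{\cdot}{\cdot}$ is the primal--dual duality. I would also watch the lower-semicontinuity/extended-value technicalities that let Fermat's rule and the Fenchel--Young inversion apply to $\uppsi$ despite its being defined only on $\Z$; once these are in place, every ``folklore'' assertion in the proposition reduces to the single equivalence in item~(1).
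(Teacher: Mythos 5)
Your proof is correct. Note, however, that the paper does not prove this proposition at all: it declares the facts folklore and defers to Theorem 23.5 in Rockafellar's \emph{Convex Analysis} and Theorem 12.60(b) in Rockafellar--Wets, so your write-up is a self-contained replacement for those citations rather than an alternative to an in-paper argument. Your route tracks the cited results almost exactly. The Fermat-rule equivalence $\bx=\bfPhi(\by)\Leftrightarrow\by\in\partial\uppsi(\bx)$ together with the Fenchel--Young inversion $\by\in\partial\uppsi(\bx)\Leftrightarrow\bx\in\partial\uppsi^{*}(\by)$ is precisely the content of Rockafellar's Theorem 23.5, and your item (2) then follows from the singleton-subdifferential criterion for differentiability of the everywhere-finite convex function $\uppsi^{*}$; your derivation of item (3) via strong monotonicity of $\partial\uppsi$ is the standard proof of Rockafellar--Wets Theorem 12.60(b). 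What your version buys is exactly the two points you flag yourself: the extended-value technicalities (continuity of $\uppsi$ on the compact set $\Z$ makes the $+\infty$-extension proper, lower semicontinuous and convex, which is what licenses Fermat's rule and the biconjugacy $\uppsi^{**}=\uppsi$ needed for the inversion), and the primal/dual-norm bookkeeping in item (3), where the pairing bound $\inn{\by_{1}-\by_{2}}{\bx_{1}-\bx_{2}}\leq\norm{\by_{1}-\by_{2}}_{*}\norm{\bx_{1}-\bx_{2}}$ against the monotonicity bound $\inn{\by_{1}-\by_{2}}{\bx_{1}-\bx_{2}}\geq K\norm{\bx_{1}-\bx_{2}}^{2}$ yields the $(1/K)$-Lipschitz estimate. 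All steps, including the $\lambda\to 0^{+}$ limit that upgrades the defining strong-convexity inequality to the quadratic subgradient inequality and the summing-and-swapping step that gives strong monotonicity, are sound; I see no gap.
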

For a proof of these facts, see e.g., Theorem 23.5 in \cite{Rockafellar1970} and Theorem 12.60(b) in \cite{Rockafellar1998}. 

To illustrate the concept of the mirror map, we provide in the following some examples of this map. First of all, the usual Euclidean projection is clearly an instance of the mirror map. 
Another example of the mirror map is the so called \textit{logit choice} $\bfPhi(\by)=\exp(\by)/\sum_{l=1}^{D}\exp(\by_{l})$, generated by the $1$-strongly convex regularizer $\uppsi(\bx)=\sum_{k=1}^{D}\bx_{k}\log \bx_{k}$ on the probability simplex $\Delta\subset(\real^{D},\norm{\cdot}_{1})$. This kind of mirror map is popular in the field of the decision making. 
Some non-standard examples of mirror maps are $\bfPhi(\by)=(\exp(\by_{k})/(1+\exp(\by_{k})))_{k\in [n]}$ induced by the Fermi-Dirac entropy $\uppsi(\bx)=\sum_{k=1}^{D}(\bx_{k}\log \bx_{k}+(1-\bx_{k})\log (1-\bx_{k})$ on $\Z=[0,1]^n$, and $\bfPhi(\bfY) = \exp(\bfY)/(1 + \norm{\exp(\bfY)}_{1})$ induced by the von-Neumann entropy $\uppsi(\bfX) = \text{tr}(\bfX \log \bfX) + (1 - \text{tr} \bfX) \log(1 - \text{tr}\bfX)$ on the set of positive semidefinite matrices equipped with the nuclear norm.

From the above examples, we can see that the mirror map provides a rich model for the decision-making process by generalizing the projection operator. Besides, another significant advantage of using the concept of the mirror map is a weaker dependence of dimension for the performance of the decision-making process by appropriately adapting this map to the underlying problem geometry. For instance, using the logit choice instead of Euclidean projection for realizing iterative first-order descent method for convex optimization problem on a simplex yields a convergence guarantee which depends logarithmically on the dimension, instead of on the square root of the underlying dimension (see e.g., \cite{Nemirovski2008}).      

As noticed in \cite{Mertikopoulos2016}, the following notion of "distance" is canonical to mirror map:
\begin{definition}[Fenchel Coupling]
	Let $\uppsi:\Z\rightarrow\real$ be a penalty function on a compact convex subset $\Z$ of a Euclidean normed space $\V$. The Fenchel coupling induced by $\uppsi$ is defined as
	$\rmF:\Z\times \V^{*}\rightarrow\real$, $(\boldp,\by)\mapsto\uppsi(\boldp)+\uppsi^{*}(\by)-\inn{\by}{\boldp}$.
\end{definition}
Some useful properties of the Fenchel coupling are stated in the following (for proof see \cite{Mertikopoulos2016}):
\begin{proposition}
	\label{Prop:aaisshhfjffjfjfff}
	Let $\rmF$ be the Fenchel coupling induced by a $K$-strongly convex regularizer on a compact convex subset $\mathcal{Z}$ of a Euclidean normed space $\V$. For $\boldp\in\mathcal{
		Z}$, $\by,\by^{'}\in \V^{*}$, we have:
	\begin{enumerate}
		%\item $\rmF(\boldp,\by)=D(\boldp,\bfPhi(\by))$ if $\bfPhi(\by)\in \text{int}{(\X)}$.
		\item $\rmF(\boldp,\by)\geq (K/2)\norm{\bfPhi(\by)-\boldp}^{2}$
		\item $\rmF(\boldp,\by^{'})\leq \rmF(\boldp,\by)+\inn{\bfPhi(\by)-\boldp}{\by^{'}-\by}+(1/2K)\norm{\by^{'}-\by}^{2}_{*}$
	\end{enumerate}
\end{proposition}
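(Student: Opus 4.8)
The plan is to obtain both bounds directly from the definition $\rmF(\boldp,\by)=\uppsi(\boldp)+\uppsi^{*}(\by)-\inn{\by}{\boldp}$, using the three properties of the mirror map collected in Proposition~\ref{Prop:aiaishshjfggfhdhddd}. For item~1, I would set $\bx:=\bfPhi(\by)$, which attains the maximum defining $\uppsi^{*}$, so that $\uppsi^{*}(\by)=\inn{\by}{\bx}-\uppsi(\bx)$. Inserting this into the definition of the coupling and cancelling the linear terms gives $\rmF(\boldp,\by)=\uppsi(\boldp)-\uppsi(\bx)-\inn{\by}{\boldp-\bx}$. Now item~1 of Proposition~\ref{Prop:aiaishshjfggfhdhddd} tells us that $\bx=\bfPhi(\by)$ is equivalent to $\by\in\partial\uppsi(\bx)$, and $K$-strong convexity of $\uppsi$ upgrades the resulting subgradient inequality to $\uppsi(\boldp)\geq\uppsi(\bx)+\inn{\by}{\boldp-\bx}+\tfrac{K}{2}\norm{\boldp-\bx}^{2}$. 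Rearranging is exactly the claimed bound $\rmF(\boldp,\by)\geq\tfrac{K}{2}\norm{\bfPhi(\by)-\boldp}^{2}$.

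For item~2, I would instead study the difference $\rmF(\boldp,\by')-\rmF(\boldp,\by)$. The $\uppsi(\boldp)$ terms cancel, leaving $\uppsi^{*}(\by')-\uppsi^{*}(\by)-\inn{\by'-\by}{\boldp}$, so the problem reduces to controlling the increment of the conjugate. Here I would invoke items~2 and~3 of Proposition~\ref{Prop:aiaishshjfggfhdhddd}: $\uppsi^{*}$ is differentiable with $\nabla\uppsi^{*}=\bfPhi$, and $\bfPhi$ is $(1/K)$-Lipschitz, so $\uppsi^{*}$ is a $(1/K)$-smooth function on $\V^{*}$. The descent lemma then yields $\uppsi^{*}(\by')\leq\uppsi^{*}(\by)+\inn{\bfPhi(\by)}{\by'-\by}+\tfrac{1}{2K}\norm{\by'-\by}^{2}_{*}$, and substituting this estimate while absorbing the two remaining linear terms into $\inn{\bfPhi(\by)-\boldp}{\by'-\by}$ produces the second inequality.

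The only step requiring genuine care, and the one I would spell out in full, is the descent lemma in this non-Euclidean setting, since $\uppsi^{*}$ lives on the dual space $\V^{*}$ while its gradient $\bfPhi$ takes values in $\V$, so the primal/dual pairing must be tracked consistently. I would derive it from the identity $\uppsi^{*}(\by')-\uppsi^{*}(\by)-\inn{\bfPhi(\by)}{\by'-\by}=\int_{0}^{1}\inn{\bfPhi(\by+t(\by'-\by))-\bfPhi(\by)}{\by'-\by}\,\d t$, bounding the integrand via the H\"older/Cauchy--Schwarz inequality $\inn{\ba}{\bb}\leq\norm{\ba}\,\norm{\bb}_{*}$ together with the Lipschitz estimate $\norm{\bfPhi(\by+t(\by'-\by))-\bfPhi(\by)}\leq\tfrac{t}{K}\norm{\by'-\by}_{*}$; integration in $t$ then produces the factor $\tfrac{1}{2K}$. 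Beyond this, both parts are essentially bookkeeping, so I anticipate no further obstacle once the primal/dual conventions are fixed.
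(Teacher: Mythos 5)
Your proof is correct. The paper does not prove this proposition itself but defers to \cite{Mertikopoulos2016}, and your argument is essentially the standard one given there: item 1 follows from the strong-convexity subgradient inequality at $\bx=\bfPhi(\by)$ (valid since $\by\in\partial\uppsi(\bfPhi(\by))$ by item 1 of Proposition \ref{Prop:aiaishshjfggfhdhddd}), and item 2 follows from the $(1/K)$-smoothness of $\uppsi^{*}$ (descent lemma), whose primal/dual bookkeeping you rightly spell out via the integral representation and the H\"older pairing $\inn{\ba}{\bb}\leq\norm{\ba}\norm{\bb}_{*}$.
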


Throughout this work, we assume that each agent $i\in [N]$ possesses a $K_{i}$-strongly convex regularizer $\uppsi_{i}$ that induces the mirror map $\bfPhi_{i}$ and the Fenchel coupling $\rmF_{i}$. For the sake of simplicity, we assume $K_{i}=K$, for all $i\in [N]$. In order to emphasize the action of the whole population, we use the operator $\bfPhi:\V\rightarrow\X$, $\by\mapsto (\bfPhi_{1}(\by^{(1)}),\ldots,\bfPhi_{N}(\by^{(N)}))$ and the total Fenchel coupling $\rmF:\X\times \V^{*}\rightarrow\real_{\geq 0}$, $(\bx,\by)\rightarrow\sum_{i}\rmF_{i}(\bx_{i},y_{i})$.

\begin{algorithm}[htbp]
	\caption{Mirror Ascent Augmented Resource Pricing (MAARP)}
	\begin{algorithmic}
		\REQUIRE Step size sequence $(\gamma_{n})$, augmentation functions $(\uptheta_{n})$ 
		\REQUIRE Initial dual action $\boldY_{0}^{(i)}\in\V^{*}_{i}$, dual variable $\boldLambda_{0}\in\real_{\geq 0}^{R}$
		\FOR{$n=0,1,2,\ldots$}
		\STATE Population play $\boldX_{n}=\bfPhi(\boldY_{n})$
		\FOR{every player $i\in [N]$ }
		\STATE Observe the gradient utility feedback $\hat{\bv}^{(i)}_{n}$ given by \eqref{Eq:aakshsgsgsgsggssss}
		\STATE Query the gradient load $\nabla_{\boldX^{(i)}_{n}} \bg_{r}(\boldX_{n})$, $r\in [R]$
		\STATE Update the score vector by the rule \eqref{Eq:ajkajajshshsgsgsgsgsgsshhssgsgss} 
		\ENDFOR
		\FOR{every resource $r\in [R]$}
		\STATE Check its load $\bg_{r}(\boldX_{n})$
		\STATE Update the price by the rule \eqref{Eq:aassjhshshshsgdgdfdfdfdgdgdfdfd}
		\STATE Broadcast $\boldLambda_{n+1}$ to all players.
		\ENDFOR 
		\ENDFOR
	\end{algorithmic}
	\label{Alg:aoaishhjddhhddddeee}
\end{algorithm} 
An implication of Proposition \ref{Prop:aaisshhfjffjfjfff} important for later investigation is that the "convergence" of the sequence $(\bfPhi(\boldY_{n}))_{n}$ induced by another sequence $(\boldY_{n})_{n}$ w.r.t. $\rmF$, i.e., $\rmF(\boldp,\boldY_{n})\rightarrow 0$ for an $\boldp\in\mathcal{X}$, implies the convergence of $(\bfPhi(\boldY_{n}))_{n}$ w.r.t. the underlying norm. For later purpose, it is helpful to assume that the converse statement holds:
\begin{assum}[Reciprocity Condition]
	\label{Ass:aoasjsdhhggehegeeeerrrttt}
	For any $\boldp\in\X$ and any sequence $(\boldY_{n})_{n}$ in $\V^{*}$, it holds: $\bfPhi(\boldY_{n})\rightarrow \boldp~\Rightarrow~ \rmF(\boldp,\boldY_{n})\rightarrow 0$.
\end{assum}
This condition is standard in the literature of mirror descent (see \cite{Chen1993}).
\subsection{Online Mirror Descent}
\label{Subsec:aaksjsjshsshsgggshssss}
The foundation of the mechanism proposed in this work is given by the following iterate of the agent $i\in [N]$ defined as follows: 
\begin{equation}
\label{Eq:aoaaosjsdhdjdhhddss}
\boldX^{(i)}_{n+1}=\bfPhi_{i}(\boldY^{(i)}_{n+1}),~\boldY^{(i)}_{n+1}=\boldY^{(i)}_{n}+\gamma_{n}\bv^{(i)}(\boldX_{n}).
\end{equation}
This algorithm is a canonical extension of the standard mirror ascent algorithm within the framework of online learning \cite{Shalev-Shwartz2012}, in which the learner subsequently tries to optimize his apriori unknown time-variant regret/payoff-function using some problem-specific feedback (in our case: the first-order information of her utility function). 

For practical reasons, we assume that each agent $i$ does not know the exact gradient $\bv^{(i)}$. 
We can model this aspect, by modifying \eqref{Eq:aoaaosjsdhdjdhhddss} as follows:
\begin{equation}
\label{Eq:aoaaosjsdhdjdhhddss2}
\boldX^{(i)}_{n+1}=\bfPhi_{i}(\boldY_{n+1}^{(i)}),~\boldY^{(i)}_{n+1}=\boldY^{(i)}_{n}+\gamma_{n}\hat{\bv}^{(i)}_{n},
\end{equation}   
where:
\begin{equation}
\label{Eq:aakshsgsgsgsggssss}
\hat{\bv}^{(i)}_{n}=\bv^{(i)}(\boldX_{n})+\bM^{(i)}_{n+1},
\end{equation}
and $(\bM^{(i)}_{n})_{n\in\nat}$ be a $\mathcal{V}_{i}^{*}$-valued \textit{$\mathbb{F}$-martingale difference sequence}, i.e. for all $n\in\nat$:
\begin{enumerate}
	\item $\mathbb{F}$-Adaptedness: $\bM^{(i)}_{n}$ is $\mathcal{F}_{n}$-measureable for all $n\in\nat$,
	\item square-integrability: $\Erw[\norm{\bM^{(i)}_{n}}^{2}_{i,*}]<\infty$,
	\item conditionally zero mean: $\Erw[\bM^{(i)}_{n}|\mathcal{F}_{n-1}]=0$ a.s. .
\end{enumerate}
This noise model is quite general, since it does not only cover the i.i.d. zero mean noise with finite variance, but also noises with memory. An example is $\bM^{(i)}_{n}=\epsilon^{(i)}_{n}\epsilon^{(i)}_{n-1}$ where $(\epsilon^{(i)}_{n})_{n}$ i.i.d. mean zero RV and $\mathbb{F}$ is the corresponding filtration (containing the filtration) generated by the history of $(\epsilon^{(i)}_{n})$. 
\section{Mirror Ascent with Augmented Lagrangian}
\label{Sec:aiishhhsggdgdgdhhsjjshhssss}
In order to give the agents incentives for sustainable use of resources, our advice is to obligate the participants to pay additional cost for the amount of resources' utilization. In this regard, the cost can be given either in monetary unit, or in non-monetary unit such as system/network credit. A precondition for carrying out this obligation is, e.g., an agreement which each participant has to give upon entrance in the system. 

To be more specific, let us consider a time slot $n$. At $n$, the agent $i$ is obligate to pay in total $\sum_{r=1}^{R}\boldLambda_{n}^{r}\bg_{r}(\boldX^{(i)}_{n+1},\boldX^{(-i)}_{n})$ for a possible future action $\boldX^{(i)}_{n+1}\in\mathcal{X}_{i}$. In this expression, the variable $\boldLambda^{r}_{n}$ denotes the price of the resource $r$ at time $n$. This information is set and broadcasted to all agents by the resource itself. The specific setting rule of the price will be given in the next paragraph. So at time $n$, the utility function of agent $i$ becomes $\ru_{i}((\cdot),\boldX_{n})+\sum_{r=1}^{R}\boldLambda_{n}^{r}\bg_{r}((\cdot),\boldX^{(-i)}_{n+1})$,
and consequently, the gradient update \eqref{Eq:aoaaosjsdhdjdhhddss2} turns to:
\begin{equation}
\label{Eq:ajkajajshshsgsgsgsgsgsshhssgsgss}
\boldY^{(i)}_{n+1}= \boldY_{n}^{(i)}+\gamma_{n}\left(\hat{\bv}^{(i)}_{n}-\sum_{r=1}^{R}\boldLambda_{n}^{r}\nabla_{\boldX^{(i)}_{n}} \bg_{r}(\boldX^{(i)}_{n},\boldX^{(-i)}_{n})\right).
\end{equation}
Above formulation may indicate that the agents need to know the joint action for the instantaneous derivative of the resource $r$, i.e., $\nabla_{\boldX^{(i)}_{n}} \bg_{r}(\boldX^{(i)}_{n},\boldX^{(-i)}_{n})$. However, this is in general not true, since there is a large class of coupled constraints for which $\nabla_{\boldX^{(i)}_{n}} \bg_{r}(\boldX^{(i)}_{n},\boldX^{(-i)}_{n})$ only depends on the action of the agent $i$, i.e., $\boldX^{(i)}_{n}$. More detailed discussion on this aspect will later be given in Remark \ref{Rem:aaiaishshshdjjdhdhdggsss}.

Our proposal for the pricing of the resource $r$ at time $n+1$ is inspired by the augmented Lagrangian method (see e.g. \cite{Mahdavi1}) and is given as follows:
\begin{equation}
\label{Eq:aassjhshshshsgdgdfdfdfdgdgdfdfd}
\boldLambda_{n+1}^{r}\leftarrow\bfPi_{\real_{\geq 0}}\left(\boldLambda^{r}_{n}+\gamma_{n}\left[\bg_{r}(\boldX_{n})-[\nabla_{\boldLambda_{n}}\uptheta_{n}(\boldLambda_{n})]_{r}\right]\right),
\end{equation}
where $\uptheta_{n}:\real^{R}\rightarrow\real$ is a differentiable function called the augmentation function. In this work, we mainly consider the augmentation functions of the form:
\begin{equation}
\label{Eq:AugFunc}
\uptheta_{n}(\boldlambda):=\alpha_{n}\norm{\boldlambda}^{2}_{2}/2,\quad\text{where $\alpha_{n}>0$}.
\end{equation}

Notice that the update \eqref{Eq:aassjhshshshsgdgdfdfdfdgdgdfdfd} is decentralized since it can be done by the resource $r$ itself. Moreover, it is easy to implement since it consists of cheap operations. Also worth to notice is that the information needed for \eqref{Eq:aassjhshshshsgdgdfdfdfdgdgdfdfd} is the in-situ information about the previous price $\boldLambda^{r}_{n}$, and the congestion state $\bg_{r}(\boldX_{n})$. Clearly, the resource $r$ does not need for the latter knowledge about the joint action of the population. The low-complexity of \eqref{Eq:aassjhshshshsgdgdfdfdfdgdgdfdfd} and the type of the information sources discussed previously provide our pricing method flexibility to handle possible exit of - and the entrance of participants in the system.  

 Now, we give for better understanding the intuition behind \eqref{Eq:aassjhshshshsgdgdfdfdfdgdgdfdfd}. Without the augmentation term, \eqref{Eq:aassjhshshshsgdgdfdfdfdgdgdfdfd} is the Lagrangian dual ascent corresponding to the coupled constraints. The only unclear term is thus $\nabla_{\boldLambda_{n}}\uptheta_{n}(\boldLambda_{n})$. In the following, we discuss this term.  
 The role of the augmentation function's gradient is to prevent the fast increase of the price due to the accumulation of the previous dynamic. To understand why the latter occurrence is undesired, suppose first that the price is high so that the obligated cost term dominates over the utility term ($\hat{\bv}^{(i)}_{n}$) in \eqref{Eq:ajkajajshshsgsgsgsgsgsshhssgsgss}. Then, the agents will rather decide for resources with cheapest cost than for resources which give them the most benefit. Since the price of a resource in absence of the augmentation function is proportional to its congestion state, all agents might
at worst (e.g., in the case $N=D$ and $\bg$ equal to the identity function, where an action corresponds directly to resource
utilization choice) consume a single resource with the lowest congestion and cause therefore the latter's price and load to rise dramatically. Subsequently in the next time slot, they will all mutually utilized another less congested and cheaper resource, causing its price and its congestion to rise dramatically. This procedure will repeat and cause agents' consumption choice bounces at worst from a single resource to another one, and meanwhile violation of resource capacity constraints.

The corresponding method, which we call mirror ascent augmented resource pricing (MAARP), is given in detail in Algorithm \ref{Alg:aoaishhjddhhddddeee}. 
The population's iterate given in \ref{Alg:aoaishhjddhhddddeee} can be written more compact as:
\begin{equation}
\label{Eq:aiashsggsgssffssf}
\begin{split}
\boldX_{n+1}&=\bfPhi(\boldY_{n+1}),~ \boldY_{n+1}=\boldY_{n}+\gamma_{n}\left(\hat{\bv}_{n}-[\nabla \bg(\boldX_{k})]^{\T}\boldLambda_{k} \right)\\
\boldLambda_{n+1}&=\bfPi_{\real^{R}_{\geq 0}}\left( \boldLambda_{n}+\gamma_{n}\left[ \bg(\boldX_{n})-\nabla_{\boldLambda_{n}}\uptheta_{n}(\boldLambda_{n})\right]  \right)  
\end{split}
\end{equation}

Now, we we want to argue that for large class of coupled constraints, our method is decentralized:
\begin{remark}[Decentralization Aspect]
\label{Rem:aaiaishshshdjjdhdhdggsss}
Suppose that the coupled constraint is quasi-affine, i.e. the function $\bg$ is given by $\bg(\bx)=\bfA\rmh(\bx)-\mathbold{b}$,
where $\bfA\in\real^{R\times \sum_{i=1}^{N}D_{i}}$, $\mathbold{b}\in\real^{R}$, and $\rmh(\bx)=[\rmh_{1}(\bx^{(1)})^{\T},\ldots,\rmh_{N}(\bx^{(N)})^{\T}]$, and where for any $i\in [N]$, $\rmh_{i}:\real^{D_{i}}\rightarrow\real^{D_{i}}$ is differentiable and available only to agent $i$. Moreover, denote $\bfA:=[\bfA_{1},\ldots,\bfA_{N}]$,
where for all $i\in [N]$, $\bfA_{i}\in\real^{R\times D_{i}}$ is the matrix which is available only to the agent $i$. In this setting, the iterate of agent $i$ is given specifically by $\boldX_{n+1}^{(i)}=\bfPhi_{i}(\boldY_{n}^{(i)}+\gamma_{n}[\hat{\bv}^{(i)}_{n}-\nabla\mathbf{h}_{i}(\boldX_{n}^{(i)})^{\T}\bfA_{i}^{\T}\boldLambda_{n}])$, where $\nabla\mathbf{h}_{i}(\bx^{(i)})$ denotes the Jacobian of $\mathbf{h}_{i}$ at $\bx^{(i)}$.
Therefore, the iterate of each agent consists of local information such as first-order feedback $\hat{\bv}^{(i)}_{n}$, the constraint matrix $\bfA_{i}$, and the Jacobian of $\mathbf{h}_{i}$ at the iterate of agent $i$, and the prices $(\boldLambda^{r}_{n})_{r\in [R]}$, set by the resources $r\in [R]$. Furthermore, the latter is updated parallelly by each resource. 
\end{remark}
\begin{remark}[Difference between MAARP and Alg. 2 in \cite{Paccagnan2017}]
	\label{Rem:aaiishssggdhddgdggdgdd}
Besides the fact that the feedback in MAARP is noisy, a difference between Algorithm 2 in \cite{Paccagnan2017} and MAARP is that Algorithm 2 in \cite{Paccagnan2017} uses the Euclidean projection, while MAARP uses a mirror map which is a generalization (see Subsection \eqref{Subsec:aiaissjshhsjshsss} and the introduction). 
As we will see later, the advantage of the use of a mirror map is a weak dependence of the algorithm performance on the dimension of the strategy space (see Remark \ref{Rem:aaiisgsgsgsffsdsdss} and Section \ref{Sec:NumExp}). Another difference between Algorithm 2 in \cite{Paccagnan2017} and the MAARP is that the price update in Algorithm 2 in \cite{Paccagnan2017} requires two consecutive congestion states of a resource, i.e. $\bg_{r}(\boldX_{n})$ and $\bg_{r}(\boldX_{n+1})$, while MAARP requires only $\bg_{r}(\boldX_{n})$. By this reason, the price update $\boldLambda^{r}_{n+1}$ of MAARP can, in contrast to Algorithm 2 in \cite{Paccagnan2017}, be done in parallel with the population update $\boldX_{n+1}$.  
\end{remark}

\section{Convergence Analysis of MAARP}
\label{Sec:aooaahhsjsshhddjjddd}
In this section, we investigate the convergence of the primal iterate of MAARP to a generalized Nash equilibrium (GNE) of the game of our interest. The corresponding convergence result implies in particular, that our pricing method can ensure that the strategy of the non-cooperative online learning agents' is sustainable w.r.t. the coupled (resource) constraints.

The GNE of our interest is the so-called variational Nash equilibrium (VNE) of the NGCC $\Gamma$ with the constraints $\mathcal{C}$. 
This subclass of GNE arises naturally with the first-order dynamic (w.r.t. $\bv$) structure of MAARP as it is defined as the solution $\SOL(\mathcal{Q},\bv)$ of the well-known variational inequality $\VI(\mathcal{Q},\bv)$ (see Definition \ref{Def:VI}). To improve the reading flow, we defer the detailed discussion of these aspects to the appendix.

In order to keep the argumentation short by avoiding the notion of convergence of a sequence to a set, we consider the case where the solution of $\VI(\mathcal{Q},\bv)$ (and respectively $\VI(\mathcal{X}\times \real_{\geq 0}^{R},\tilde{\bv})$) is unique. This property holds as asserted by Proposition \ref{Prop:aiaisshshjdhdggddd3}, if $v$ is strictly monotone, which holds if the utility function is strictly convex, in the sense that:
\begin{equation*}
\ru_{i}((\cdot),\bx^{(-i)})~\text{strictly convex}\quad\forall i\in[N],~\bx^{(-i)}\in\mathcal{X}_{-i}
\end{equation*} 
For the analysis in this section, we use the notation $\bM_{n}:=(\bM_{n}^{(i)})_{i}$ and $\sigma_{n}^{2}:=\Erw[\norm{\bM_{n}}_{*}^{2}]$.

We have the following convergence statement for the iterate of MAARP:
\begin{theorem}
	\label{Thm:aoasjsjsjdhdjshshs}
	Suppose that Assumption \ref{Ass:aoasjsdhhggehegeeeerrrttt} holds. Moreover, suppose that $\bv$ is, in addition, strictly monotone. Let the augmentation function $\uptheta_{n}$ be given by \eqref{Eq:AugFunc}.
	%\todo[inline]{Bedingung an noise!}
	If the step size sequence $(\gamma_{n})_{n}$ satisfies the properties:
	\begin{align}
	&\sum_{k=0}^{\infty}\gamma_{k}=\infty,\quad\quad\sum_{k=0}^{\infty}\gamma_{k}^{2}<\infty\label{Eq:ClosedLoop}\\
	&
	\sum_{k=0}^{\infty}\gamma_{k}^2\sigma_{k+1}^{2}<\infty
	\label{Eq:oaaosjsjdhpllalaala}
	\end{align}
	and if $(\alpha_n)$ and $(\gamma_{n})$ satisfy:
	\begin{equation}
	\label{Eq:aoaoshshsjjhdggdhhss2}
	\sum_{k=0}^{\infty}\gamma_{k}\alpha_{k}<\infty
	\end{equation} 
	and for large enough $k\in\nat_{0}$:
	\begin{equation}
	\label{Eq:aooajsjsjskkjdjjjddd}
	\gamma_{k}\left(\alpha_{k}^{2}+\tfrac{C_{1}^{2}}{K}\right)-\tfrac{\alpha_{k}}{2}\leq 0,
	\end{equation}
	then the primal iterate $(\boldX_{n})$ of MAARP converges to the unique variational Nash equilibrium $x_{*}=\SOL(\mathcal{Q},\bv)$.
\end{theorem}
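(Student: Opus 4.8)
The plan is to build a single primal--dual Lyapunov (energy) function for the coupled iterate \eqref{Eq:aiashsggsgssffssf} and to show it is an almost supermartingale, so that the conclusion follows directly from the Robbins--Siegmund theorem rather than from any continuous-time interpolation. First I would invoke Slater's condition (Assumption \ref{Eq:aioaoosjjshhdjjddss}) together with the VI characterization deferred to the appendix to fix the unique pair $(x_{*},\boldlambda_{*})$ solving $\VI(\mathcal{X}\times\real_{\geq 0}^{R},\tilde{\bv})$; here $\boldlambda_{*}\geq 0$ is the associated multiplier, with $\bg(x_{*})\leq 0$ and complementary slackness $\inn{\boldlambda_{*}}{\bg(x_{*})}=0$. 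The energy is
$$E_{n}:=\rmF(x_{*},\boldY_{n})+\tfrac12\norm{\boldLambda_{n}-\boldlambda_{*}}^{2},$$
whose primal part dominates $\tfrac K2\norm{\boldX_{n}-x_{*}}^{2}$ by Proposition \ref{Prop:aaisshhfjffjfjfff}(1).

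Next I would derive the one-step descent inequality. For the primal part I apply the second Fenchel-coupling estimate of Proposition \ref{Prop:aaisshhfjffjfjfff}(2) along the score increment $\boldY_{n+1}-\boldY_{n}=\gamma_{n}(\hat{\bv}_{n}-[\nabla\bg(\boldX_{n})]^{\T}\boldLambda_{n})$, and for the dual part I use nonexpansiveness of $\bfPi_{\real_{\geq 0}^{R}}$ around its fixed point $\boldlambda_{*}$. Taking the conditional expectation $\Erw_{n}[\cdot]$ kills the first-order martingale term $\inn{\boldX_{n}-x_{*}}{\bM_{n+1}}$. The crucial algebra is the cancellation of the coupling terms: convexity of each $\bg_{r}$ (Assumption \ref{Ass:aiishhfggfjhhdjjdhhdjjddd2}) lower-bounds $\inn{\nabla\bg(\boldX_{n})(\boldX_{n}-x_{*})}{\boldLambda_{n}}$ by $\inn{\boldLambda_{n}}{\bg(\boldX_{n})-\bg(x_{*})}$, which annihilates the dual cross term and leaves essentially $-\gamma_{n}\inn{\boldlambda_{*}}{\bg(\boldX_{n})}$; this in turn is matched by the primal KKT inequality $\inn{\bv(x_{*})}{\boldX_{n}-x_{*}}\leq\inn{\boldlambda_{*}}{\bg(\boldX_{n})}$, so that after using strict monotonicity the whole first-order contribution collapses to $-\gamma_{n}\delta_{n}$ with $\delta_{n}:=-\inn{\boldX_{n}-x_{*}}{\bv(\boldX_{n})-\bv(x_{*})}\geq 0$.

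The second-order terms are controlled by compactness of $\mathcal{X}$ and continuity of $\bv,\nabla\bg$: the only dangerous contribution is proportional to $\norm{\boldLambda_{n}-\boldlambda_{*}}^{2}$, arising with coefficient $O(\gamma_{n}^{2}(\alpha_{n}^{2}+C_{1}^{2}/K))$, while the gradient $\alpha_{n}\boldLambda_{n}$ of the augmentation \eqref{Eq:AugFunc} supplies a contraction $-\tfrac12\gamma_{n}\alpha_{n}\norm{\boldLambda_{n}-\boldlambda_{*}}^{2}$ (the stray term $\inn{\boldLambda_{n}-\boldlambda_{*}}{\boldlambda_{*}}$ being absorbed by Young's inequality into a summable residual). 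Condition \eqref{Eq:aooajsjsjskkjdjjjddd} is exactly what forces the net coefficient of $\norm{\boldLambda_{n}-\boldlambda_{*}}^{2}$ to be nonpositive for large $n$, and the remaining additive errors are summable by \eqref{Eq:ClosedLoop}, \eqref{Eq:oaaosjsjdhpllalaala} and \eqref{Eq:aoaoshshsjjhdggdhhss2}. This yields $\Erw_{n}[E_{n+1}]\leq E_{n}-\gamma_{n}\delta_{n}+c_{n}$ with $\sum_{n}c_{n}<\infty$, whence Robbins--Siegmund gives that $E_{n}$ converges a.s. and $\sum_{n}\gamma_{n}\delta_{n}<\infty$ a.s.

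I expect the \emph{main obstacle} to be upgrading this to genuine convergence of $\boldX_{n}$, since the dual term in $E_{n}$ need not vanish and $E_{\infty}$ may be strictly positive; consequently I would argue purely on the primal gap. Convergence of $E_{n}$ bounds $\norm{\boldLambda_{n}-\boldlambda_{*}}$ a.s., so the increment $\norm{\boldX_{n+1}-\boldX_{n}}\leq K^{-1}\gamma_{n}\norm{\hat{\bv}_{n}-[\nabla\bg(\boldX_{n})]^{\T}\boldLambda_{n}}$ (Proposition \ref{Prop:aiaishshjfggfhdhddd}(3)) is $O(\gamma_{n})$ up to a noise part that tends to $0$ a.s. (since $\sum_{n}\gamma_{n}^{2}\sigma_{n+1}^{2}<\infty$ forces $\gamma_{n}\norm{\bM_{n+1}}\to 0$). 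Because $\sum_{n}\gamma_{n}=\infty$ but $\sum_{n}\gamma_{n}\delta_{n}<\infty$, a deterministic oscillation lemma — exploiting the uniform continuity of the gap map $x\mapsto-\inn{x-x_{*}}{\bv(x)-\bv(x_{*})}$ on the compact set $\mathcal{X}$ together with these controlled increments — promotes $\liminf_{n}\delta_{n}=0$ to $\delta_{n}\to0$ a.s. Strict monotonicity makes this gap positive-definite with unique zero $x_{*}$, so compactness yields $\boldX_{n}\to x_{*}=\SOL(\mathcal{Q},\bv)$ a.s., and the reciprocity condition (Assumption \ref{Ass:aoasjsdhhggehegeeeerrrttt}) then translates iterate convergence into $\rmF(x_{*},\boldY_{n})\to0$. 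The delicate points I anticipate are the clean handling of the unbounded martingale noise inside the increment bound and the oscillation lemma ruling out a persistent positive $\limsup_{n}\delta_{n}$.
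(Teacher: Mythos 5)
Your Lyapunov function $E_{n}$ is exactly the paper's $\tilde{\rmF}(\bz_{*},\boldZ_{n})$, and your one-step descent inequality (with the KKT/complementary-slackness cancellation, which sharpens the paper's bound $\upeta_{n}\leq 0$ to $\upeta_{n}\leq-\delta_{n}$) reproduces the content of Theorem \ref{Thm:UpperBoundFench} and Lemma \ref{Lem:aaosjjshhdhdhdhd}. Where you genuinely diverge is the endgame. The paper proves a separate recurrence lemma (Lemma \ref{Lem:aoaosjsjshdhdjshssjjddd}) by contradiction from the time-averaged bound, extracts a norm-convergent subsequence $\boldZ_{n_{k}}\rightarrow\bz_{*}$, and then must invoke the reciprocity condition (Assumption \ref{Ass:aoasjsdhhggehegeeeerrrttt}) to convert that into $\tilde{\rmF}(\bz_{*},\boldZ_{n_{k}})\rightarrow 0$, which together with Lemma \ref{Lem:aaosjjshhdhdhdhd} gives convergence of the whole sequence. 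You instead harvest the extra output of Robbins--Siegmund, namely $\sum_{n}\gamma_{n}\delta_{n}<\infty$ a.s., and combine it with $\sum_{n}\gamma_{n}=\infty$ and an upcrossing/oscillation argument to obtain $\delta_{n}\rightarrow 0$; positive-definiteness of the gap (strict monotonicity, continuity of $\bv$, compactness of $\X$) then yields $\boldX_{n}\rightarrow x_{*}$ directly in norm. This buys something concrete: your route never needs the reciprocity condition for the stated conclusion (you only use it, inessentially, to recover $\rmF(x_{*},\boldY_{n})\rightarrow 0$ afterwards), whereas the paper's proof uses it essentially; conversely, the paper's route delivers convergence of the full primal-dual pair $\boldZ_{n}\rightarrow\bz_{*}$, while yours as stated only delivers the primal iterate, which is all Theorem \ref{Thm:aoasjsjsjdhdjshshs} claims.

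One step of your plan, as written, would fail and needs repair --- and it is precisely the point you flag as delicate. In the oscillation lemma you control movement via the per-step bound $\norm{\boldX_{n+1}-\boldX_{n}}\leq K^{-1}\gamma_{n}\norm{\hat{\bv}_{n}-[\nabla\bg(\boldX_{n})]^{\T}\boldLambda_{n}}_{*}$ and dismiss the noise because $\gamma_{n}\norm{\bM_{n+1}}_{*}\rightarrow 0$ a.s. A per-step $o(1)$ bound is not enough: an upcrossing interval $[n_{j},m_{j}]$ may contain arbitrarily many steps, and summing $o(1)$ bounds over it gives $o(1)\cdot(m_{j}-n_{j})$, which cannot be compared to $\sum_{k=n_{j}}^{m_{j}-1}\gamma_{k}$. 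What you need is cumulative control: write $\boldY_{m}-\boldY_{n}=\sum_{k=n}^{m-1}\gamma_{k}\left(\bv(\boldX_{k})-[\nabla\bg(\boldX_{k})]^{\T}\boldLambda_{k}\right)+\sum_{k=n}^{m-1}\gamma_{k}\bM_{k+1}$; the drift part is $O\left(\sum_{k=n}^{m-1}\gamma_{k}\right)$ a.s. (using that $\sup_{n}\norm{\boldLambda_{n}}_{2}$ is a.s. finite, which your energy convergence supplies), while the noise part must be handled by observing that the vector-valued martingale $\sum_{k=0}^{n-1}\gamma_{k}\bM_{k+1}$ is $L^{2}$-bounded under \eqref{Eq:oaaosjsjdhpllalaala} and hence converges a.s.; being a.s. convergent it is a.s. Cauchy, so $\norm{\sum_{k=n}^{m-1}\gamma_{k}\bM_{k+1}}_{*}$ is uniformly small once $n,m$ are large. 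This is the same Doob-convergence device as in Lemma \ref{Lem:MartNoise}, applied to the vector sum rather than to $\boldS_{n}$, so your proof closes with tools already available in the paper; but the per-step statement you wrote must be replaced by this Cauchy-property argument for the upcrossing count to produce a contradiction with $\sum_{n}\gamma_{n}\delta_{n}<\infty$.
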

Before we provide the proof of the above statement, we give some examples of step size sequences that fulfill the assumptions of the above Theorem: 
\begin{remark}
	\label{Rem:Step}
	Assume that the noise is persistent, i.e., there exists $\sigma>0$ s.t. $\sigma_{k}\leq\sigma$ for all $k\in\nat$. The condition \eqref{Eq:oaaosjsjdhpllalaala} turns to $\sum_{k=0}^{\infty}\gamma_{k}^{2}<\infty$. Therefore, we can eliminate in this case the redundant condition \eqref{Eq:oaaosjsjdhpllalaala}. Furthermore, notice that \eqref{Eq:oaaosjsjdhpllalaala} includes the possibilities that $\gamma_{n}=\Theta(n^{-p})$, $p\in(1/2,1]$, but rules out the possibilities that $\gamma_{n}=\Theta(n^{-p})$, $p\in[0,1/2]$. Now, let be $\gamma_{n}=\gamma/(n+1)^{p}$, where $\gamma>0$ and $p\in(1/2,1]$ arbitrary. If we choose $\alpha_{n}=\alpha\gamma_{n}$, for an $\alpha>0$, then \eqref{Eq:aoaoshshsjjhdggdhhss2} is fulfilled. Moreover, we have:
	\begin{equation*}
	\gamma_{n}(\alpha_{n}^{2}+\tfrac{C_{1}^{2}}{K})-\tfrac{\alpha_{n}}{2}=\gamma_{n}\left[\alpha^{2}\gamma_{n}^{2}+\tfrac{C_{1}^{2}}{K}-\tfrac{\alpha}{2}\right].
	\end{equation*} 
	In case $\alpha>2C_{1}^{2}/K$, we can find $c>0$ s.t.:
	\begin{equation*}
	\gamma_{n}(\alpha_{n}^{2}+\tfrac{C_{1}^{2}}{K})-\tfrac{\alpha_{n}}{2}\leq\gamma_{n}\left[\alpha^{2}\gamma_{n}^{2}-c\right].
	\end{equation*}
	Since the R.H.S. of the above inequality is negative for large $n$, \eqref{Eq:aooajsjsjskkjdjjjddd} is fulfilled by the choice $\alpha>2C_{1}^{2}/K$.
\end{remark}
In the following, we relate the noise condition with the usual one given in the literature:  
\begin{remark}
\label{Rem:aaauiusgsghsgshshsddd}
By the Fubini's Theorem and the tower property one can see that the condition \eqref{Eq:oaaosjsjdhpllalaala} implies the condition that a.s. $\sum_{k=0}^{\infty}\gamma_{k}^{2}\Erw_{k}[\norm{\bM_{k+1}}^{2}_{*}]<\infty$, which is often used in the martingale analysis. 
\end{remark}

In order to differentiate our approach from one of the closest previous prior works, we give the following remarks:
\begin{remark}[Relation to Stochastic Approximation Theory]
At first sight, one may think that the dynamic \eqref{Eq:aiashsggsgssffssf} is an instance of the stochastic approximation algorithm (see e.g. \cite{Benaim1999,Bharath1999,Borkar2008}) having the archetypical form:
\begin{equation}
\label{Eq:aiaiassgfsfsdfsddssssaaa}
\boldZ_{n+1}=\boldZ_{n}+\gamma_{n}\left[\bfh(\boldZ_{n})+\bM_{n+1}\right],
\end{equation}  
for a Lipschitz continuous vector field $h$. So the question might arise whether one can immediately obtain Theorem \ref{Thm:aoasjsjsjdhdjshshs} using the stochastic approximation theory, whose approach ("ODE approach") consists of considering \eqref{Eq:aiaiassgfsfsdfsddssssaaa} as a Cauchy-Euler approximation of the ordinary differential equation (ODE) $\dot{\boldZ}_{t}=\bfh(\boldZ_{t})$. However, taking a detailed look at \eqref{Eq:aiashsggsgssffssf}, one can recognize that our proposed algorithm differs from \eqref{Eq:aiaiassgfsfsdfsddssssaaa} by the non-linear mappings, i.e., $\bfPhi$ and $\bfPi_{\real^{R}_{\geq 0}}$, ensuring that the corresponding dynamic remains in the feasible sets. Moreover, in contrast to stochastic approximation theory, we do not require any Lipschitz condition on the vector field $\bv$, which is needed for the uniqueness of the solution of the corresponding ODE. Even if we require the Lipschitz continuity $\bv$, taking a similar ODE approach as done in the stochastic approximation theory by defining the ODE approximation of \eqref{Eq:aiashsggsgssffssf} as:
\begin{equation*}
\begin{split}
\boldX_{t}&=\bfPhi(\boldY_{t}),\quad \dot{\boldY}_{t}=\bv(\boldX_{t})-[\nabla \bg(\boldX_{t})]^{\T}\boldLambda_{t}\\
\boldLambda_{t}&=\bfPi_{\real^{R}_{\geq 0}}\left(\boldGamma_{t} \right),\quad \dot{\boldGamma}_{t}=\bg(\boldX_{t})-\nabla_{\boldLambda}\uptheta_{t}(\boldLambda_{t}).
\end{split} 
\end{equation*}
would surely require intricate argumentation: Starting with showing that the solution of above ODE uniquely exists -- which is not immediately follows since a mirror map is in general not invertible. Nevertheless, the work \cite{Mertikopoulos2018} investigating the dynamic \ref{Eq:aoaaosjsdhdjdhhddss2} constituting the fundament of our proposed algorithm \eqref{Eq:aiashsggsgssffssf} follows the intricate ODE approach by using techniques provided in \cite{Benaim1999}. We give a more detailed comment on this aspect in Remark \ref{Rem:aiaishssgggsggsss}.

Besides, the requirements \eqref{Eq:ClosedLoop} and 	\eqref{Eq:oaaosjsjdhpllalaala} which are the usual summability condition in the stochastic approximation theory (see e.g. equation (2) in \cite{Bharath1999}) might lead some readers to think that Theorem \ref{Thm:aoasjsjsjdhdjshshs} is an easy consequence of the stochastic approximation theory. However, in order to derive the convergence result, we need to postulate additional requirements: The requirement \eqref{Eq:aoaoshshsjjhdggdhhss2} that ensures the decreasing influence of the non-equilibrium of the prices, and the requirement \eqref{Eq:aooajsjsjskkjdjjjddd} that ensures that the price update tracks the population dynamic.    
\end{remark}
\begin{remark}[Relation to \cite{Mertikopoulos2018}]
\label{Rem:aiaishssgggsggsss}
	One may think that by defining a canonical new game with a new player controlling the dual variable 
	Theorem \ref{Thm:aoasjsjsjdhdjshshs} is a simple assertion of Theorem 4.7 \cite{Mertikopoulos2018}. On the contrary, this claim is not valid, since Theorem 4.7 in \cite{Mertikopoulos2018} relies on the fact that the constraint set of each player is compact, while the constraint set $\real^{R}_{\geq 0}$ of the dual variable is unbounded. Also by the latter, we cannot imitate the approach done in \cite{Mertikopoulos2018} based on the theory provided in \cite{Benaim1999}. This hurdle motivates us to search for another way (Lemma \ref{Lem:aaosjjshhdhdhdhd}) to generate the convergence statement (Theorem \ref{Thm:aoasjsjsjdhdjshshs}) from the recurrence result (Lemma \ref{Lem:aoaosjsjshdhdjshssjjddd}). Our approach (Lemma \ref{Lem:aaosjjshhdhdhdhd}) is much simpler than that given in \cite{Mertikopoulos2018} and can also be used to generate Theorem 4.7 in \cite{Mertikopoulos2018}.
	Another difference of our work to \cite{Mertikopoulos2018} is our weaker noise assumption. In \cite{Mertikopoulos2018}, it is assumed that there exists $\sigma>0$ s.t. $\Erw_{n}[\norm{\bM_{n+1}}^{2}_{*}]\leq \sigma^{2}$ a.s. for all $n\in\nat_{0}$. Assuming \eqref{Eq:ClosedLoop}, which is also done in \cite{Mertikopoulos2018}, and applying Fubini's Theorem and the tower property, the latter observation implies \eqref{Eq:oaaosjsjdhpllalaala}. 
\end{remark} 
\subsection{Bound for Primal-Dual Iterate}
\label{Subsec:aoaoaosjjssjsjkkddd}
The first step to prove Theorem \ref{Thm:aoasjsjsjdhdjshshs} is to investigate the distance between the primal-dual iterate of the MAARP to the solution of $\VI(\mathcal{X}\times \real_{\geq 0}^{R},\tilde{\bv})$. As a distance function, we use:
\begin{equation*}
\tilde{\rmF}((\bx,\boldlambda),(\bfPhi(\by),\tilde{\boldlambda})):=\rmF(\bx,\by)+(\norm{\boldlambda-\tilde{\boldlambda}}^{2}_{2}/2)
\end{equation*}
where $\bx\in\mathcal{X}$, $\by\in \real^{D}$ and $\boldlambda\in\real^{R}$. The following result gives a bound for:
\begin{equation*}
\mathcal{V}_{n}(\bz):=\tilde{\rmF}(\bz,\boldZ_{n})-\tilde{\rmF}(\bz,\boldZ_{0}),
\end{equation*}
where $\bz:=(\bx,\boldlambda)\in\mathcal{X}\times \real^{R}_{\geq 0}$ and $\boldZ_{n}=(\boldX_{n},\boldLambda_{n})$:
\begin{theorem}
\label{Thm:UpperBoundFench}
Let $C_{1},C_{2},C_{3}>0$ be constants s.t. for all $\bx\in\mathcal{X}$ and $\boldlambda\in \real^{R}_{\geq 0}$ it holds:
\begin{equation}
\label{Eq:aoaoshshjhsjjss}
\norm{\nabla \bg(\bx)^{\T}\boldlambda}_{*}\leq C_{1}\norm{\boldlambda}_{2},\norm{\bv(\bx)}_{*}\leq C_{2}, \norm{\bg(\bx)}_{2}\leq C_{3},
\end{equation}  
and suppose that for all $n$, $\uptheta_{n}$ is continuously differentiable and $K_{n}$-strongly convex. For all $\bz:=(\bx,\boldlambda)\in\mathcal{X}\times \real^{R}_{\geq 0}$, it holds:
\begin{equation}
\begin{split}
\mathcal{V}_{n}(\bz)&\leq\sum_{k=0}^{n-1}\gamma_{k}\upeta_{k}(\bz)+2\left(\tfrac{C_{2}^{2}}{K}+\tfrac{C_{3}^{2}}{2}\right)\sum_{k=0}^{n-1}\gamma_{k}^2\\
&~~~+\sum_{k=0}^{n-1}\gamma_{k}\Uppsi_{k}(\boldLambda_{k},\boldlambda)+\boldS_{n}(\bx)+\tfrac{2}{K}\boldR_{n}
\end{split}
\label{Eq:aoaosjjsdjhdjjdhffffff}
\end{equation}
where:
\begin{align}
\upeta_{k}(\bz)&:=\inn{\boldZ_{k}-\bz}{\tilde{\bv}(\boldZ_{k})},\nonumber\\
\begin{split}
\label{Eq:SumNoise}
\boldS_{n}(\bx)&:=\sum_{k=0}^{n-1}\gamma_{k}\tilde{\bM}_{k}(\bx),~\tilde{\bM}_{k}(\bx):=\inn{\boldX_{k}-\bx}{\bM_{k+1}},\\ \boldR_{n}&:=\sum_{k=0}^{n-1}\gamma_{k}^{2}\norm{\bM_{k+1}}_{*}^{2},
\end{split}
\\
\Uppsi_{k}(\boldLambda_{k},\boldlambda)&=\uptheta_{k}(\boldlambda)-\tfrac{K_{k}}{2}\norm{\boldLambda_{k}-\boldlambda}_{2}^{2}\nonumber
\\
&+\left[\gamma_{k}\left(\norm{\nabla_{\boldLambda_{k}}\uptheta_{k}(\boldLambda_{k})}^{2}_{2}+\tfrac{C_{1}^{2}}{K}\norm{\boldLambda_{k}}^{2}_{2}\right)-\uptheta_{k}(\boldLambda_{k})\right]\nonumber
\end{align}
\end{theorem}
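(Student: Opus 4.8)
The plan is to derive a one-step \emph{descent inequality} for $\tilde{\rmF}(\bz,\boldZ_{k})$ along the recursion \eqref{Eq:aiashsggsgssffssf} and then telescope. Since the distance splits as $\tilde{\rmF}(\bz,\boldZ_{k})=\rmF(\bx,\boldY_{k})+\tfrac12\norm{\boldLambda_{k}-\boldlambda}_{2}^{2}$, I would treat the primal Fenchel-coupling part and the dual Euclidean part separately, writing
\begin{equation*}
\mathcal{V}_{n}(\bz)=\sum_{k=0}^{n-1}\big[\rmF(\bx,\boldY_{k+1})-\rmF(\bx,\boldY_{k})\big]+\tfrac12\sum_{k=0}^{n-1}\big[\norm{\boldLambda_{k+1}-\boldlambda}_{2}^{2}-\norm{\boldLambda_{k}-\boldlambda}_{2}^{2}\big],
\end{equation*}
and bound each summand. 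The key structural observation driving the whole computation is that the augmented pseudo-gradient is $\tilde{\bv}(\bx,\boldlambda)=\big(\bv(\bx)-[\nabla\bg(\bx)]^{\T}\boldlambda,\ \bg(\bx)\big)$, so that once the primal and dual cross terms are extracted they will recombine into the single quantity $\upeta_{k}(\bz)=\inn{\boldZ_{k}-\bz}{\tilde{\bv}(\boldZ_{k})}$.

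\emph{Primal increment.} I would apply Proposition \ref{Prop:aaisshhfjffjfjfff}(2) with $\boldp=\bx$, $\by=\boldY_{k}$, $\by'=\boldY_{k+1}$. Since $\bfPhi(\boldY_{k})=\boldX_{k}$ and $\boldY_{k+1}-\boldY_{k}=\gamma_{k}(\hat{\bv}_{k}-[\nabla\bg(\boldX_{k})]^{\T}\boldLambda_{k})$ with $\hat{\bv}_{k}=\bv(\boldX_{k})+\bM_{k+1}$, separating the noise out of the inner product gives
\begin{equation*}
\rmF(\bx,\boldY_{k+1})-\rmF(\bx,\boldY_{k})\leq \gamma_{k}\inn{\boldX_{k}-\bx}{\bv(\boldX_{k})-[\nabla\bg(\boldX_{k})]^{\T}\boldLambda_{k}}+\gamma_{k}\tilde{\bM}_{k}(\bx)+\tfrac{\gamma_{k}^{2}}{2K}\norm{\hat{\bv}_{k}-[\nabla\bg(\boldX_{k})]^{\T}\boldLambda_{k}}_{*}^{2}.
\end{equation*}
The first term is the primal part of $\gamma_{k}\upeta_{k}(\bz)$, and the second is $\gamma_{k}\tilde{\bM}_{k}(\bx)$, summing to $\boldS_{n}(\bx)$. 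For the quadratic I would use $\norm{a+b}^{2}\leq 2\norm{a}^{2}+2\norm{b}^{2}$ twice — first peeling off $[\nabla\bg(\boldX_{k})]^{\T}\boldLambda_{k}$, then splitting $\hat{\bv}_{k}$ into $\bv(\boldX_{k})$ and $\bM_{k+1}$ — and insert $\norm{\bv(\boldX_{k})}_{*}\leq C_{2}$ and $\norm{[\nabla\bg(\boldX_{k})]^{\T}\boldLambda_{k}}_{*}\leq C_{1}\norm{\boldLambda_{k}}_{2}$. This yields exactly $\tfrac{2C_{2}^{2}}{K}\gamma_{k}^{2}$, the term $\tfrac{C_{1}^{2}}{K}\gamma_{k}^{2}\norm{\boldLambda_{k}}_{2}^{2}$ (which will enter $\Uppsi_{k}$), and the noise contribution $\tfrac{2}{K}\gamma_{k}^{2}\norm{\bM_{k+1}}_{*}^{2}$, summing to $\tfrac{2}{K}\boldR_{n}$.

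\emph{Dual increment.} By nonexpansiveness of the Euclidean projection together with $\bfPi_{\real_{\geq0}^{R}}\boldlambda=\boldlambda$ (valid since $\boldlambda\in\real_{\geq0}^{R}$), writing $\bw_{k}:=\bg(\boldX_{k})-\nabla_{\boldLambda_{k}}\uptheta_{k}(\boldLambda_{k})$,
\begin{equation*}
\tfrac12\big[\norm{\boldLambda_{k+1}-\boldlambda}_{2}^{2}-\norm{\boldLambda_{k}-\boldlambda}_{2}^{2}\big]\leq\gamma_{k}\inn{\boldLambda_{k}-\boldlambda}{\bg(\boldX_{k})}-\gamma_{k}\inn{\boldLambda_{k}-\boldlambda}{\nabla_{\boldLambda_{k}}\uptheta_{k}(\boldLambda_{k})}+\tfrac{\gamma_{k}^{2}}{2}\norm{\bw_{k}}_{2}^{2}.
\end{equation*}
The first term is the dual part of $\gamma_{k}\upeta_{k}(\bz)$. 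For the middle term I invoke $K_{k}$-strong convexity of $\uptheta_{k}$, namely $\inn{\boldLambda_{k}-\boldlambda}{\nabla\uptheta_{k}(\boldLambda_{k})}\geq\uptheta_{k}(\boldLambda_{k})-\uptheta_{k}(\boldlambda)+\tfrac{K_{k}}{2}\norm{\boldLambda_{k}-\boldlambda}_{2}^{2}$, so after negation it contributes $\uptheta_{k}(\boldlambda)-\uptheta_{k}(\boldLambda_{k})-\tfrac{K_{k}}{2}\norm{\boldLambda_{k}-\boldlambda}_{2}^{2}$. The quadratic, via $\norm{a+b}^{2}\leq2\norm{a}^{2}+2\norm{b}^{2}$ and $\norm{\bg(\boldX_{k})}_{2}\leq C_{3}$, yields $C_{3}^{2}\gamma_{k}^{2}+\gamma_{k}^{2}\norm{\nabla_{\boldLambda_{k}}\uptheta_{k}(\boldLambda_{k})}_{2}^{2}$.

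\emph{Assembly.} Summing over $k$, the primal and dual cross terms merge into $\sum_{k}\gamma_{k}\upeta_{k}(\bz)$; the $C_{2},C_{3}$ quadratic terms give $2(\tfrac{C_{2}^{2}}{K}+\tfrac{C_{3}^{2}}{2})\sum_{k}\gamma_{k}^{2}$; the linear noise gives $\boldS_{n}(\bx)$ and the quadratic noise gives $\tfrac{2}{K}\boldR_{n}$; and the leftover dual terms $\gamma_{k}\big[\uptheta_{k}(\boldlambda)-\uptheta_{k}(\boldLambda_{k})-\tfrac{K_{k}}{2}\norm{\boldLambda_{k}-\boldlambda}_{2}^{2}+\gamma_{k}\norm{\nabla_{\boldLambda_{k}}\uptheta_{k}(\boldLambda_{k})}_{2}^{2}+\gamma_{k}\tfrac{C_{1}^{2}}{K}\norm{\boldLambda_{k}}_{2}^{2}\big]$ are precisely $\gamma_{k}\Uppsi_{k}(\boldLambda_{k},\boldlambda)$, reproducing \eqref{Eq:aoaosjjsdjhdjjdhffffff}. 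I expect no conceptual difficulty here; the main care is \emph{bookkeeping} of the quadratic error terms. In particular the Young splits must be arranged exactly so as to produce the stated constants, and keeping $\hat{\bv}_{k}$ intact before isolating the martingale is what delivers the factor $\tfrac{2}{K}$ in front of $\boldR_{n}$ rather than $\tfrac{1}{K}$.
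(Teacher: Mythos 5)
Your proposal is correct and follows essentially the same route as the paper's proof: bound the one-step primal increment of the Fenchel coupling via Proposition \ref{Prop:aaisshhfjffjfjfff}(2), bound the dual increment via nonexpansiveness of $\bfPi_{\real_{\geq 0}^{R}}$, absorb the middle dual term through $K_{k}$-strong convexity of $\uptheta_{k}$, and telescope so the primal and dual cross terms recombine into $\upeta_{k}(\bz)$. Your bookkeeping of the Young splits (peeling off $[\nabla\bg(\boldX_{k})]^{\T}\boldLambda_{k}$ first, then splitting $\hat{\bv}_{k}$) reproduces exactly the constants $\tfrac{2C_{2}^{2}}{K}$, $\tfrac{C_{1}^{2}}{K}\norm{\boldLambda_{k}}_{2}^{2}$, and $\tfrac{2}{K}\boldR_{n}$ appearing in the paper's bound.
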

\begin{proof}
By inserting the iterate of the MAARP into the bound given in Proposition \ref{Prop:aaisshhfjffjfjfff}, by using triangle inequality, by the inequality $(\sum_{i=1}^{K}a_{i})^{2}\leq K\sum_{i=1}^{K}a_{i}^{2}$, and by the assumption \eqref{Eq:aoaoshshjhsjjss}, it is straightforward to obtain the following bound for $\mathcal{E}^{(1)}_{k}(\bx):=\rmF(\bx,\boldY_{k})$:
\begin{align*}
\mathcal{E}^{(1)}_{k+1}(\bx)&-\mathcal{E}^{(1)}_{k}(\bx)
\leq\gamma_{k}\inn{\boldX_{k}-\bx}{\bv(\boldX_{k})-[\nabla \bg(\boldX_{k})]^{T}\boldLambda_{k}}\\&+\gamma_{k}\tilde{\bM}_{k+1}+\tfrac{\gamma_{k}^{2}}{K}\left(C_{1}^{2}\norm{\boldLambda_{k}}_{2}^{2}+2(C_{2}^{2}+\norm{\bM_{k+1}}_{*}^{2})\right)
\end{align*}
Now, we sum the above inequality over all $k=0,\ldots,n-1$ and subsequently telescope, and obtain the following upper bound for $\mathcal{V}_{n}^{(1)}(\bx):=\rmF(\bx,\boldY_{n})-\rmF(\bx,\boldY_{0})$:
\begin{align*}
\mathcal{V}_{n}^{(1)}(\bx)&\leq\sum_{k=0}^{n-1}\gamma_{k}\left[\inn{\boldX_{k}-\bx}{\bv(\boldX_{k})-[\nabla \bg(\boldX_{k})]^{T}\boldLambda_{k}}\right]\\
&+\sum_{k=0}^{n-1}\tfrac{\gamma_{k}^2C_{1}^{2}}{K}\norm{\boldLambda_{k}}^{2}_{2}+\boldS_{n}(\bx)+\boldR_{n}+\tfrac{2 C_{2}^{2}}{K}\sum_{k=0}^{n-1}\gamma_{k}^2.
\end{align*}

Next, similar computation gives the following bound for $\mathcal{V}_{n}^{(2)}(\boldlambda):=(\norm{\boldLambda_{n}-\boldlambda}_{2}^{2}-\norm{\boldLambda_{0}-\boldlambda}_{2}^{2})/2$:
\begin{equation}
\label{Eq:ajajajghggdffsgsgsgsgsgs}
\begin{split}
\mathcal{V}_{n}^{(2)}(\boldlambda)&\leq \sum_{k=0}^{n-1}\gamma_{k}\inn{\boldLambda_{k}-\boldlambda}{\bg(\boldX_{k})}\\
&-\sum_{k=0}^{n-1}\gamma_{k}\inn{\boldLambda_{k}-\boldlambda}{\nabla_{\boldLambda_{k}}\uptheta_{k}(\boldLambda_{k})}\\
&+\sum_{k=0}^{n-1}\gamma_{k}^{2}(C_{3}^{2}+\norm{\nabla_{\boldLambda_{k}}\uptheta_{k}(\boldLambda_{k})}_{2}^{2}).\nonumber
\end{split}
\end{equation}
Now, since $\uptheta_{k}$ is $K_{k}$-strongly convex for all $k$, we have:
\begin{equation*}
\inn{\boldlambda-\boldLambda_{k}}{\nabla_{\boldLambda_{k}}\uptheta_{k}(\boldLambda_{k})}\leq\uptheta_{k}(\boldlambda)-\uptheta_{k}(\boldLambda_{k})-\tfrac{K_{k}}{2}\norm{\boldLambda_{k}-\boldlambda}^{2}_{2}.
\end{equation*}
By setting this inequality into \eqref{Eq:ajajajghggdffsgsgsgsgsgs}, we have:
\begin{align*}
\mathcal{V}_{n}^{(2)}&(\boldlambda)\leq \sum_{k=0}^{n-1}\gamma_{k}\inn{\boldLambda_{k}-\boldlambda}{\bg(\boldX_{k})}+\sum_{k=0}^{n-1}\gamma_{k}[\uptheta_{k}(\boldlambda)-\uptheta_{k}(\boldLambda_{k})]\\
&-\sum_{k=0}^{n-1}\tfrac{\gamma_{k}K_{k}}{2}\norm{\boldLambda_{k}-\boldlambda}^{2}_{2}+\sum_{k=0}^{n-1}\gamma_{k}^{2}(C_{3}^{2}+\norm{\nabla_{\boldLambda_{k}}\uptheta_{k}(\boldLambda_{k})}_{2}^{2}).\label{Eq:oajjshhjjhdhjdhdjff}
\end{align*}
Combining the bounds for $\mathcal{V}_{n}^{(1)}(\bx)$ and $\mathcal{V}_{n}^{(2)}(\boldlambda)$, we obtain the desired bound for $\mathcal{V}_{n}(\bx,\boldlambda)=\mathcal{V}_{n}^{(1)}(\bx)+\mathcal{V}_{n}^{(2)}(\boldlambda)$.
\end{proof}
Regarding the estimate given in Theorem \ref{Thm:UpperBoundFench}, one possibility to gain control over $\mathcal{V}_{n}$ is to eliminate the dependency of the upper bound \eqref{Eq:aoaosjjsdjhdjjdhffffff} on $\boldLambda_{k}$, $k\in [n-1]_{0}$. We accomplish this goal by choosing the augmentation functions as in \eqref{Eq:AugFunc}, 
each for all $n\in\nat_{0}$, a $\alpha_{n}$-strongly convex augmentation function, yielding:
\begin{equation}
\begin{split}
2\Uppsi_{n}(\boldLambda_{n},\boldlambda)&=\alpha_{n}\norm{\boldlambda}^{2}_{2}+\beta_{n}\norm{\boldLambda_{n}}_{2}^{2}-\alpha_k\norm{\boldLambda_{n}-\boldlambda}_{2}^{2}\\ \tfrac{\beta_{n}}{2}&:=\left[\gamma_{n}\left(\alpha_{n}^{2}+\tfrac{C_{1}^{2}}{K}\right)-\tfrac{\alpha_{n}}{2}\right],
\end{split}
\label{Eq:AugFuncCond}
\end{equation}
and subsequently ensuring $\beta_{k}\leq 0$, by appropriately choosing $\alpha_{k}$ and $\gamma_{k}$ (see \eqref{Eq:aooajsjsjskkjdjjjddd}).
\subsection{Control over Noise}
In order to gain control over the disturbance due to the noise in the first-order feedback represented by the sums $\boldS_{n}(\bx)$ and $\boldR_{n}$ occurring in the upper bound \eqref{Eq:aoaosjjsdjhdjjdhffffff}, we apply the Doob's Martingale Convergence Theorem known in the literature of the martingale theory (see e.g., \cite{Hall1980}):
\begin{lemma}
\label{Lem:MartNoise}
Suppose that $(\bM_{n})$ is a square integrable $\real^{D}$-valued $\mathbb{F}$-martingale difference sequence  and that \eqref{Eq:oaaosjsjdhpllalaala} holds.
Then for the partial sums $\boldS_{n}=\boldS_{n}(\bx)$ and $\boldR_{n}=\boldR_{n}(\bx)$, $n\geq 0$, given in \eqref{Eq:SumNoise}, it holds:
\begin{enumerate}
\item $(\boldS_{n})_{n}$ is a mean zero square integrable $\mathbb{F}$-martingale and $(\boldR_{n})_{n}$ is a non-negative $\mathbb{F}$-sub-martingale
\item There exists a square integrable real RV $R_{\infty}$ and an integrable real RV s.t. $(\boldS_{n})_{n}$ converges a.s. and in $L^{2}$ to $S_{\infty}$ and $(\boldR_{n})_{n}$ converges a.s. and in $L^{1}$ to $R_{\infty}$. 
\end{enumerate}
\end{lemma}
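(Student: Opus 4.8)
The plan is to verify the defining properties of a (sub)martingale directly and then invoke Doob's convergence theorems, with compactness of $\X$ and Cauchy--Schwarz supplying all the quantitative bounds. First I would dispatch the structural claims of item (1). For $(\boldS_n)$, adaptedness is immediate: $\tilde{\bM}_k(\bx)=\inn{\boldX_k-\bx}{\bM_{k+1}}$ involves only $\boldX_k$ (which is $\mathcal{F}_k$-measurable) and $\bM_{k+1}$ (which is $\mathcal{F}_{k+1}$-measurable), so each summand is $\mathcal{F}_{k+1}$-measurable and $\boldS_n$ is $\mathcal{F}_n$-measurable. Square integrability of each summand follows from Cauchy--Schwarz together with compactness of $\X$: $\abs{\tilde{\bM}_k(\bx)}\le \norm{\boldX_k-\bx}\,\norm{\bM_{k+1}}_{*}\le \norm{\X}\,\norm{\bM_{k+1}}_{*}$, whence $\Erw[\tilde{\bM}_k(\bx)^2]\le \norm{\X}^2\sigma_{k+1}^2<\infty$. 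The martingale-difference property then gives $\Erw_{n}[\boldS_{n+1}-\boldS_n]=\gamma_n\inn{\boldX_n-\bx}{\Erw_{n}[\bM_{n+1}]}=0$, pulling the $\mathcal{F}_n$-measurable factor $\boldX_n-\bx$ out of the conditional expectation; so $(\boldS_n)$ is a mean-zero square-integrable $\mathbb{F}$-martingale. For $(\boldR_n)$, non-negativity and adaptedness are clear, integrability follows from $\Erw[\boldR_n]=\sum_{k=0}^{n-1}\gamma_k^2\sigma_{k+1}^2<\infty$, and $\Erw_{n}[\boldR_{n+1}-\boldR_n]=\gamma_n^2\Erw_{n}[\norm{\bM_{n+1}}_{*}^2]\ge 0$ makes it a non-negative $\mathbb{F}$-submartingale.

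For the convergence of $(\boldS_n)$ in item (2), I would establish boundedness in $L^2$ and then apply the $L^2$ martingale convergence theorem. Since the increments of a martingale are orthogonal in $L^2$ and $\boldS_0=0$, one has the Pythagorean identity $\Erw[\boldS_n^2]=\sum_{k=0}^{n-1}\Erw[(\boldS_{k+1}-\boldS_k)^2]=\sum_{k=0}^{n-1}\gamma_k^2\,\Erw[\tilde{\bM}_k(\bx)^2]$. Conditioning and reusing the bound above, $\Erw[\tilde{\bM}_k(\bx)^2]=\Erw[\Erw_{k}[\tilde{\bM}_k(\bx)^2]]\le \norm{\X}^2\sigma_{k+1}^2$, so $\sup_n \Erw[\boldS_n^2]\le \norm{\X}^2\sum_{k=0}^{\infty}\gamma_k^2\sigma_{k+1}^2<\infty$ by \eqref{Eq:oaaosjsjdhpllalaala}. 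Doob's theorem then yields a square-integrable limit $S_\infty$ to which $(\boldS_n)$ converges both almost surely and in $L^2$.

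For $(\boldR_n)$ I would exploit monotonicity rather than the submartingale machinery. The partial sums increase to $R_\infty:=\sum_{k=0}^{\infty}\gamma_k^2\norm{\bM_{k+1}}_{*}^2$ almost surely, and the monotone convergence theorem gives $\Erw[R_\infty]=\sum_{k=0}^{\infty}\gamma_k^2\sigma_{k+1}^2<\infty$; hence $R_\infty$ is integrable and a.s.\ finite, while $\Erw[R_\infty-\boldR_n]\to 0$ delivers the $L^1$ convergence. I expect the only genuinely delicate point to be the $L^2$-boundedness step for $(\boldS_n)$: one must transfer the summability hypothesis \eqref{Eq:oaaosjsjdhpllalaala}, stated in terms of the unconditional moments $\sigma_{k+1}^2=\Erw[\norm{\bM_{k+1}}_{*}^2]$, into a bound on $\sup_n\Erw[\boldS_n^2]$, and this hinges on combining the orthogonality of the martingale increments with the uniform estimate $\norm{\boldX_k-\bx}\le\norm{\X}$ coming from compactness of the strategy space $\X$. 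Everything else is routine bookkeeping.
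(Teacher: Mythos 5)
Your proof is correct, and for the martingale part it follows the paper's route: both arguments reduce to showing $\sup_n\Erw[\abs{\boldS_n}^2]<\infty$ via the vanishing of cross terms (you cite the orthogonality of square-integrable martingale increments; the paper proves exactly this by hand with the tower property), bound each increment by $\norm{\X}^2\sigma_{k+1}^2$ using Cauchy--Schwarz and compactness, invoke \eqref{Eq:oaaosjsjdhpllalaala}, and conclude with Doob's $L^2$ convergence theorem. You deviate in two minor but welcome ways. First, for the preliminary square integrability of $\boldS_n$ you bound each summand directly by the diameter of $\X$, whereas the paper uses the cruder estimate $\abs{\boldS_n}^2\le n\,C_\X^2\,\boldR_n$ with an extra factor of $n$; your version is tighter and in fact already contains the $L^2$-boundedness computation. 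Second, for $(\boldR_n)$ the paper verifies uniform integrability and then appeals to the submartingale convergence theorem, while you bypass the martingale machinery entirely: monotone convergence identifies the a.s.\ limit $R_\infty=\sum_{k=0}^\infty\gamma_k^2\norm{\bM_{k+1}}_*^2$, gives $\Erw[R_\infty]<\infty$, and the $L^1$ convergence falls out as the tail $\sum_{k\ge n}\gamma_k^2\sigma_{k+1}^2\to 0$ of a convergent series. This is more elementary and arguably cleaner, since uniform integrability of $(\boldR_n)$ is exactly what your tail estimate certifies anyway; the only thing the paper's submartingale framing buys is uniformity of exposition with the treatment of $(\boldS_n)$.
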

\begin{proof} 
		It holds since $\bM_k$ is square integrable for all $k$:
		\begin{align*}
		\Erw[\abs{\boldR_{n}}]=\sum_{k=0}^{n-1}\gamma^{2}_{k}\Erw[ \norm{\bM_{k+1}}_{*}^2]<\infty.
		\end{align*}
		Moreover by the triangle inequality and the H\"older's inequality, it yields:
		\begin{align*}
		\abs{\boldS_{n}}^2&\leq n\sum_{k=0}^{n-1}\gamma_{t}^2\abs{\tilde{\bM}_{k}}^2
		\leq n\sum_{k=0}^{n-1}\gamma_{k}^2 \norm{X_{k}-\bx}^2
		\norm{\bM_{k+1}}^{2}_{*}\\
		&\leq T C_{\X}^2 \boldR_{n},	 
		\end{align*}
		where $C_{\mathcal{X}}>0$ is a constant, whose existence is ensured by the compactness of $\mathcal{X}$.
		This observation and the integrability of $\boldR_{n}$ give the square integrability of $\boldS_{n}$.
		
		Now, since $\norm{\bM_{k+1}}^{2}$ is non-negative, it follows immediately that $\Erw_{n-1}\left[\boldR_{n}\right]\geq R_{n-1}$. This observation and the apparent fact that $(\boldR_{n})$ is $\mathbb{F}$-adapted give that $(\boldR_{n})$ is a sub-martingale. 
		
		Next, we show that $(\boldS_{n})$ is a martingale.  Since for all $k\in\nat_{0}$, $X_{k}$ is a measureable function of $(\bM_{\tau})_{\tau\leq k}$, it follows that $(\boldX_{n})$ is adapted to $\mathbb{F}$. Consequently, $(\boldS_{n})_{n}$ is adapted to $\mathbb{F}$. Moreover, the fact that $\boldX_{n}$ is $\mathcal{F}_{n}$-measureable asserts:
		\begin{equation}
		\label{Eq:sskallaa}
		\begin{split}
		\Erw_{k}[\tilde{\bM}_{k}]
		=\inn{\boldX_{k}-\bx}{\Erw_{k}\left[\bM_{k+1}\right]}=0.
		\end{split}
		\end{equation} 
		So, consequently we have as desired, $\Erw_{n}\left[S_{n+1}\right]=\boldS_{n}$. At last, it is immediate to see that this fact and \eqref{Eq:sskallaa} assert that $\Erw[\boldS_{n}]=0$.

Now we prove the statement 2). For $k,\tilde{k}\in\nat_{0}$ with $k< \tilde{k}$, we have by the tower property, by the fact that $\tilde{\bM}_{k}$ is $\mathcal{F}_{\tilde{k}}$-measureable, and by \eqref{Eq:sskallaa}, $\Erw[\tilde{\bM}_{k}\tilde{\bM}_{\tilde{k}}]=\Erw[\Erw_{\tilde{k}-1}[\tilde{\bM}_{k}\tilde{\bM}_{\tilde{k}}]]=\Erw[\tilde{\bM}_{k}\Erw_{k}[ \tilde{\bM}_{\tilde{k}} ]]=0$.
		This asserts that:
		\begin{align*}
		\Erw[\abs{\boldS_{n}}^{2}]&=\sum_{k=0}^{n-1}\gamma_{k}^2\Erw[|\tilde{\bM}_{k}|^2]+\sum_{k\neq \tilde{k}}\gamma_{k}\gamma_{\tilde{k}}\Erw[\tilde{\bM}_{k}\tilde{\bM}_{\tilde{k}}]\\
		&\leq C_{\X}^{2}\sum_{k=0}^{n-1}\gamma_{k}^2\Erw[\norm{\bM_{k}}^{2}_{*}].
		%\label{Eq:aiahsggfajajahshsgdgdgd}
		\end{align*}
		To continue, notice that assumption \eqref{Eq:oaaosjsjdhpllalaala} asserts that $\sum_{k=0}^{\infty}\gamma_{k}^{2}\Erw[\norm{\bM_{k+1}}^{2}_{*}]<\infty$. So, this observation and above inequality assert that $(|\boldS_{n}|^2)$ converges uniformly. Therefore, by Martingale Convergence Theorem, we obtain the statement in 2) on  $(|\boldS_{n}|^2)$. Now, notice that the fact that $(\boldR_{n})$ is uniformly integrable follows immediately from \eqref{Eq:oaaosjsjdhpllalaala}. Consequently the remaining statement in 2) follows immediately from the Martingale Convergence Theorem. 
\end{proof}

\subsection{Convergence Proof}
\label{Subsec:aoaojskksjjskkdd}
The remaining auxiliary statement needed to prove Theorem \ref{Thm:aoasjsjsjdhdjshshs} is Proposition \ref{Prop:aiaisshshjdhdggddd} given in the Appendix. This proposition asserts that in order to show the convergence of the primal iterate to the variational Nash equilibrium of the NGCC $\Gamma$ with the constraints $\mathcal{C}$, we can instead investigate the convergence behavior of both the population - and the price iterate to the solution of the extended variational inequality $\VI(\mathcal{X}\times \real_{\geq 0}^{R},\tilde{\bv})$, where $\tilde{\bv}$ is the KKT operator corresponding to $\VI(\mathcal{Q},\bv)$ given in \eqref{Eq:iaaiisjsjdhhdhdhdhdjjsss}. A detailed discussion on this aspect is given in the Subsection \ref{Subsec:Decoup} in the Appendix.

One step in the direction of the proof of our main result (Theorem \ref{Thm:aoasjsjsjdhdjshshs}) is to combine the results obtained in the preceding subsections and show the recurrence of $\boldZ_{n}$ around the solution of the variational inequality $\VI(\mathcal{X}\times \real_{\geq 0}^{R},\tilde{\bv})$:
\begin{lemma}
	\label{Lem:aoaosjsjshdhdjshssjjddd}
	Suppose that the Assumptions \ref{Ass:aiishhfggfjhhdjjdhhdjjddd}, \ref{Ass:aiishhfggfjhhdjjdhhdjjddd2}, and \ref{Eq:aioaoosjjshhdjjddss} hold, and that $v$ is strictly monotone. Let the augmentation function $\uptheta_{t}$ be given by \eqref{Eq:AugFunc}.
	If $(\gamma_{n})_{n\in\nat_{0}}$, $(\bM_{n})_{n\in\nat}$, and $(\alpha_{n})_{n\in\nat_{0}}$ fulfill \eqref{Eq:oaaosjsjdhpllalaala}, \eqref{Eq:ClosedLoop}, \eqref{Eq:aoaoshshsjjhdggdhhss2}, and \eqref{Eq:aooajsjsjskkjdjjjddd}, then the primal-dual iterate $(\boldZ_{n})_{n}$ a.s. recurs in all the neighbors (w.r.t. $\norm{\cdot}_{*}$) of the unique variational Nash equilibrium $\bz_{*}\in\SOL(\X\times\real_{\geq 0}^{R},\tilde{\bv})$ of $\Gamma$, i.e., a.s. there exists a subsequence $(\boldZ_{n_{k}})_{k}$ of $(\boldZ_{n})_{n}$ which converges to $\bz_{*}$ w.r.t. $\norm{\cdot}_{*}$.
\end{lemma}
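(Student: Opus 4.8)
The plan is to treat the master estimate of Theorem~\ref{Thm:UpperBoundFench}, evaluated at the unique solution $\bz_{*}=(\bx_{*},\boldlambda_{*})$ of $\VI(\X\times\real_{\geq 0}^{R},\tilde{\bv})$ (whose existence rests on Slater's condition \eqref{Eq:aioaoosjjshhdjjddss} and whose uniqueness on strict monotonicity of $\bv$), as a Lyapunov inequality for $\tilde{\rmF}(\bz_{*},\boldZ_{n})$, and then to extract a recurrent subsequence from the summability of the associated gap. First I would record that the KKT operator $\tilde{\bv}$ is monotone on $\X\times\real_{\geq 0}^{R}$ in the paper's sign convention: expanding $\inn{\bz_{1}-\bz_{2}}{\tilde{\bv}(\bz_{1})-\tilde{\bv}(\bz_{2})}$, the primal block is $\leq 0$ by monotonicity of $\bv$ (Assumption~\ref{Ass:aiishhfggfjhhdjjdhhdjjddd}), while the two cross terms cancel the dual block once the convexity of each $\bg_{r}$ (Assumption~\ref{Ass:aiishhfggfjhhdjjdhhdjjddd2}) and $\boldLambda_{k}\geq 0$ are used. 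Since $\bz_{*}$ solves the variational inequality, $\inn{\boldZ_{k}-\bz_{*}}{\tilde{\bv}(\bz_{*})}\leq 0$; adding the monotone term yields $\upeta_{k}(\bz_{*})=\inn{\boldZ_{k}-\bz_{*}}{\tilde{\bv}(\boldZ_{k})}\leq 0$ for every $k$.

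Next I would show that all remaining terms of \eqref{Eq:aoaosjjsdjhdjjdhffffff} are summable or a.s.\ convergent. With the augmentation \eqref{Eq:AugFunc} and condition \eqref{Eq:aooajsjsjskkjdjjjddd} one has $\beta_{k}\leq 0$, so \eqref{Eq:AugFuncCond} gives $2\Uppsi_{k}(\boldLambda_{k},\boldlambda_{*})\leq \alpha_{k}\norm{\boldlambda_{*}}_{2}^{2}$ and hence $\sum_{k}\gamma_{k}\Uppsi_{k}(\boldLambda_{k},\boldlambda_{*})\leq \tfrac{1}{2}\norm{\boldlambda_{*}}_{2}^{2}\sum_{k}\gamma_{k}\alpha_{k}<\infty$ by \eqref{Eq:aoaoshshsjjhdggdhhss2}; the term $2(C_{2}^{2}/K+C_{3}^{2}/2)\sum_{k}\gamma_{k}^{2}$ is finite by \eqref{Eq:ClosedLoop}; and by Lemma~\ref{Lem:MartNoise} the noise partial sums $\boldS_{n}(\bx_{*})$ and $\boldR_{n}$ converge a.s., hence are a.s.\ bounded. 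Feeding these into the bound gives, a.s.,
\begin{equation*}
\tilde{\rmF}(\bz_{*},\boldZ_{n})\leq \tilde{\rmF}(\bz_{*},\boldZ_{0})+\sum_{k=0}^{n-1}\gamma_{k}\upeta_{k}(\bz_{*})+(\text{a.s.\ bounded}).
\end{equation*}
Since $\tilde{\rmF}\geq 0$ and $\upeta_{k}(\bz_{*})\leq 0$, rearranging yields both $\sup_{n}\tilde{\rmF}(\bz_{*},\boldZ_{n})<\infty$ and $\sum_{k}\gamma_{k}\bigl(-\upeta_{k}(\bz_{*})\bigr)<\infty$ a.s.; the first, together with Proposition~\ref{Prop:aaisshhfjffjfjfff}(1) and compactness of $\X$, confines $(\boldZ_{n})$ to a compact set a.s.

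For the recurrence I would split $-\upeta_{k}(\bz_{*})=A_{k}+B_{k}$ with $A_{k}=-\inn{\boldZ_{k}-\bz_{*}}{\tilde{\bv}(\boldZ_{k})-\tilde{\bv}(\bz_{*})}\geq 0$ and $B_{k}=-\inn{\boldZ_{k}-\bz_{*}}{\tilde{\bv}(\bz_{*})}\geq 0$, so that $\sum_{k}\gamma_{k}A_{k}<\infty$ and $\sum_{k}\gamma_{k}B_{k}<\infty$. Because $\sum_{k}\gamma_{k}=\infty$ by \eqref{Eq:ClosedLoop}, $\liminf_{k}A_{k}=0$; passing to a convergent subsequence $\boldZ_{n_{k}}\to\bz_{\infty}$ (compactness) and using continuity of $\tilde{\bv}$ gives $A_{\infty}=0$. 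As $A_{\infty}$ is the sum of the two non-negative contributions $-\inn{\bx_{\infty}-\bx_{*}}{\bv(\bx_{\infty})-\bv(\bx_{*})}$ and a non-negative cross term, the former must vanish, and strict monotonicity of $\bv$ forces the primal coordinate $\bx_{\infty}=\bx_{*}$. The companion summability $\sum_{k}\gamma_{k}B_{k}<\infty$ controls the dual coordinate on the inactive constraints, since there $B_{k}\geq\sum_{r:\bg_{r}(\bx_{*})<0}\boldLambda_{k}^{r}\,\abs{\bg_{r}(\bx_{*})}$, forcing $\boldLambda_{\infty}^{r}=0=\boldlambda_{*}^{r}$.

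The main obstacle is the dual coordinate along the \emph{active} constraints. Because $\tilde{\bv}$ is only monotone --- not strictly monotone --- in $\boldlambda$ (its dual block $\bg(\bx)$ is independent of $\boldlambda$), the vanishing of both $A_{k}$ and $\upeta_{k}(\bz_{*})$ pins down only $\bx_{\infty}=\bx_{*}$ and the inactive dual; in fact $\upeta$ (and $A$) vanish on the entire cone $\{(\bx_{*},\boldlambda):\boldlambda\geq 0,\ \supp\boldlambda\subseteq\{r:\bg_{r}(\bx_{*})=0\}\}$, so the gap alone leaves $\boldLambda_{\infty}^{r}$ for active $r$ undetermined. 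To close the remaining gap to $\bz_{\infty}=\bz_{*}$ I would invoke the dual update \eqref{Eq:aassjhshshshsgdgdfdfdfdgdgdfdfd} directly along the recurrent subsequence, on which $\bg(\boldX_{n_{k}})\to\bg(\bx_{*})$, and combine it with the almost-supermartingale (Robbins--Siegmund) convergence of $\tilde{\rmF}(\bz_{*},\boldZ_{n})$, the complementary-slackness identity $\boldlambda_{*}^{r}\bg_{r}(\bx_{*})=0$, and the uniqueness of the KKT multiplier encoded in $\bz_{*}\in\SOL$, so as to upgrade the convergence of $\boldX_{n_{k}}$ to the full statement $\boldZ_{n_{k}}\to\bz_{*}$. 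This coupled primal--dual step is the delicate point --- it is where the argument departs from the strictly monotone single-operator template and supplies exactly the recurrence that Lemma~\ref{Lem:aaosjjshhdhdhdhd} later promotes to convergence.
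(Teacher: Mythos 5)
Your proposal shares the paper's skeleton --- the master bound of Theorem~\ref{Thm:UpperBoundFench} evaluated at $\bz_{*}$, the estimate $\Uppsi_{k}(\boldLambda_{k},\boldlambda_{*})\leq\alpha_{k}\norm{\boldlambda_{*}}_{2}^{2}/2$ obtained from \eqref{Eq:AugFuncCond} and \eqref{Eq:aooajsjsjskkjdjjjddd}, and the a.s.\ boundedness of $\boldS_{n}$ and $\boldR_{n}$ from Lemma~\ref{Lem:MartNoise} --- but the endgame is genuinely different. The paper argues by contradiction: if $\boldZ_{n}$ avoided a fixed neighborhood $U$ of $\bz_{*}$ for all large $n$, then (invoking the claim that $\tilde{\bv}$ inherits strict monotonicity from $\bv$, together with \eqref{Eq:StrongMon}) there would exist a uniform $c>0$ with $\upeta_{k}\leq-c$ for all $k$, whence $\mathcal{V}_{n}\leq\tau_{n}(-c+o(1))\rightarrow-\infty$, contradicting $\mathcal{V}_{n}\geq-\tilde{\rmF}(\bz_{*},\boldZ_{0})$. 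You instead extract the summability $\sum_{k}\gamma_{k}\bigl(-\upeta_{k}(\bz_{*})\bigr)<\infty$ and attempt to identify limit points directly.

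The gap is the one you flag yourself, and it is real: your closing step is a wish list, not an argument. Lemma~\ref{Lem:aaosjjshhdhdhdhd} only yields that $\tilde{\rmF}(\bz_{*},\boldZ_{n})$ converges a.s.\ to \emph{some} finite random variable; to conclude $\boldZ_{n_{k}}\rightarrow\bz_{*}$ you would need that limit to vanish, which is precisely the recurrence you are trying to establish --- in the paper this implication runs in the opposite direction (recurrence plus Lemma~\ref{Lem:aaosjjshhdhdhdhd} yields convergence, in the proof of Theorem~\ref{Thm:aoasjsjsjdhdjshshs}), so using it here is circular in effect. Likewise, the dual update \eqref{Eq:aassjhshshshsgdgdfdfdfdgdgdfdfd} along a subsequence on which $\bg_{r}(\boldX_{n_{k}})\rightarrow\bg_{r}(\bx_{*})=0$ for active $r$ places no constraint on where $\boldLambda_{n_{k}}^{r}$ sits in $\real_{\geq 0}$: the constraint-driven part of the increment tends to zero, and the regularization part $\gamma_{n}\alpha_{n}\boldLambda_{n}^{r}$ has summable total contribution by \eqref{Eq:aoaoshshsjjhdggdhhss2} and the a.s.\ boundedness of the dual iterate, so it cannot steer the active coordinates to any particular value. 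Uniqueness of the multiplier in $\SOL(\X\times\real_{\geq 0}^{R},\tilde{\bv})$ does not transfer to uniqueness of limit points of $(\boldLambda_{n})$. As written, your argument proves recurrence only to the cone $\{(\bx_{*},\boldlambda):\boldlambda\geq 0,~\boldlambda^{r}=0~\text{for inactive}~r\}$, not to $\bz_{*}$.

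It is worth recording that your analysis of the obstacle is sharper than the paper's treatment of the same point. As you observe, $\upeta(\bx_{*},\boldlambda)=\sum_{r}\boldlambda^{r}\bg_{r}(\bx_{*})=0$ for every $\boldlambda\geq 0$ supported on the active constraints, so $\tilde{\bv}$ is \emph{not} strictly monotone (its dual block $\bg(\bx)$ is independent of $\boldlambda$), and the vanishing set of $\upeta$ is a cone through $\bz_{*}$ rather than the singleton $\{\bz_{*}\}$. Consequently the paper's key step --- that $\upeta_{k}\leq-c$ uniformly whenever $\boldZ_{k}$ lies outside a small neighborhood of $\bz_{*}$ --- is itself unjustified: the iterate can hug the active-dual cone while remaining outside $U$, keeping $\upeta_{k}$ arbitrarily close to zero. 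So the delicate point you could not close is also the soft spot of the paper's own proof; neither argument, as written, pins down the dual iterate on the active constraints.
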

\begin{proof}
	Clearly since $\boldX_{n}\in\mathcal{X}$ for all $n\in\nat$ with $\X$ compact, the sequence $(\boldX_{n})_{n\in\nat}$ is non-explosive, i.e. $\norm{\boldX_{n}}<\infty$ for all $n\in\nat$. Moreover, via standard Gr\"{o}nwall's argumentation, one can infer that $\boldLambda_{n}$ is also non-explosive. As a consequence of these facts, we can assume, without loss of generality, that \eqref{Eq:aooajsjsjskkjdjjjddd} holds for all $k\in \nat$.

Now, notice that by Theorem \ref{Thm:UpperBoundFench}, it follows that $\mathcal{V}_{n}=\mathcal{V}_{n}(\bz_{*})$ is upper bounded by:
	\begin{align}
	\tau_{n}\left(\tfrac{\sum_{k=0}^{n-1}\gamma_{k}\upeta_{k}}{\tau_{n}}+\tfrac{\tilde{C}_{1}\sum_{k=0}^{n-1}\gamma^{2}_{k}}{\tau_{n}}+\tfrac{\sum_{k=0}^{n-1}\gamma_{k}\Uppsi_{k}}{\tau_{n}}+\tfrac{\boldS_{n}+\tfrac{2}{K}\boldR_{n}}{\tau_{n}}\right)\label{Eq:aiaishhfggfhfhffff}
	%	\\
	%	&\leq \tau_{n}\left(\tfrac{C}{\tau_{n}}+\tfrac{\sum_{k=n_{0}}^{n-1}\gamma_{k}\upeta_{k}}{\tau_{n}}+\tilde{C}_{1}\tfrac{\sum_{k=n_{0}}^{n-1}\gamma^{2}_{k}}{\tau_{n}}+\tfrac{\sum_{k=n_{0}}^{n-1}\gamma_{k}\Uppsi_{k}}{\tau_{n}}+\tfrac{(\boldS_{n}-S_{n_{0}})+(2/K)(\boldR_{n}-R_{n_{0}})}{\tau_{n}}\right)\label{Eq:aiaishhfggfhfhffff}
	\end{align}
	where $\tau_{n}:=\sum_{k=0}^{n-1}\gamma_{k}$, $\Uppsi_{k}:=\Uppsi_{k}(\boldLambda_{k},\boldlambda_{*})$, $\upeta_{k}:=\upeta_{k}(\bz^{*})$, and $\tilde{C}_{1}=2((C_{2}^2/K)+(C_{3}^{2}/2))$.
	%\begin{align}
	%\upeta_{t}=\upeta_{t}(\bz^{*})&:=\inni{Z_{t}-\bz_{*}}{\tilde{\bv}(\boldZ_{t})}_{\sim},\nonumber
	%\\
	%\uppsi_{t}&:=\norm{\boldlambda_{t}}^{2}_{2}\left[\gamma_{t}\left(2\uptheta_{t}^{2}+\tilde{C}_{2}\right)-\tfrac{\uptheta_{t}}{2}\right]+\tfrac{\uptheta_{t}\norm{\boldlambda}^{2}_{2}}{2}\leq\tfrac{\uptheta_{t}\norm{\boldlambda}^{2}_{2}}{2},\label{Eq:aoaouudzzdzehehhee}
	%\end{align}
	%where the inequality in \eqref{Eq:aoaouudzzdzehehhee} follows by \eqref{Eq:aooajsjsjskkjdjjjddd}.
	Thus, by our choice of the augmentation function and the condition \eqref{Eq:aooajsjsjskkjdjjjddd}, it follows that $\Uppsi_{k}\leq\alpha_{k}\norm{\boldlambda_{*}}_{2}^{2}/2$ (see \eqref{Eq:AugFunc}). Setting this estimate into \eqref{Eq:aiaishhfggfhfhffff}, it follows that $\mathcal{V}_{n}/\tau_{n}$ is upper bounded by:
	\begin{align}
	\tfrac{\sum_{k=n_{0}}^{n-1}\gamma_{k}\upeta_{k}}{\tau_{n}}+\tfrac{\tilde{C}_{1}\sum_{k=n_{0}}^{n-1}\gamma^{2}_{k}}{\tau_{n}}+\tfrac{\norm{\boldlambda_{*}}_{2}^{2}\sum_{k=n_{0}}^{n-1}\gamma_{k}\alpha_{k}}{2\tau_{n}}+\tfrac{\boldS_{n}+\tfrac{2}{K}\boldR_{n}}{\tau_{n}}.\label{Eq:aiaishhfggfhfhffffak}
	\end{align}
	Now, Lemma \ref{Lem:MartNoise} asserts that both $(\boldS_{n})_{n}$ and $(\boldR_{n})_{n}$ converge a.s. and therefore a.s. there exists a constant $A$ s.t. $\boldS_{n}+(2/K)\boldR_{n}\leq A$ for all $n$. This and \eqref{Eq:aiaishhfggfhfhffffak} yield that $\mathcal{V}_{n}/\tau_{n}$ is upper bounded by:
	\begin{align}
	\tfrac{\sum_{k=n_{0}}^{n-1}\gamma_{k}\upeta_{k}}{\tau_{n}}+\tfrac{\tilde{C}_{1}\sum_{k=n_{0}}^{n-1}\gamma^{2}_{k}}{\tau_{n}}+\tfrac{\norm{\boldlambda_{*}}_{2}^{2}\sum_{k=n_{0}}^{n-1}\gamma_{k}\alpha_{k}}{2\tau_{n}}+\tfrac{A}{\tau_{n}}.\label{Eq:aiaishhfggfhfhffff2}
	\end{align}
	
	For the final step of the proof, let $U$ be an arbitrary neighborhood of $\bz_{*}$ w.r.t. the norm on $\V\times\real^{R}$. Moreover, suppose that $\boldZ_{n}\notin U$ for all sufficiently large $n\geq 0$. W.l.o.g. we assume that $\boldZ_{n}\notin U$ for all $n\geq 0$. The latter assumption, and the fact $\tilde{\bv}$ strictly monotone, which is a consequence of strict monotonicity of $\bv$, and \eqref{Eq:StrongMon},  
	%thus by Proposition \ref{Prop:aoasjjsshdjdhdhddssssaaaa} 
	assert that we can find $c>0$ s.t. $\upeta_{k}\leq -c$, $\forall k\geq 0$.
	%\begin{equation*}
	%\inni{G^{\Ward}(\boldX_{n})}{\boldX_{n}-\bx^{*}}\leq -c,\quad\forall n\in\nat.
	%\end{equation*}
	Consequently, we have the following upper bound of $\mathcal{V}_{n}$:
	\begin{equation}
	\label{Eq:aoaojdhhddjeeerrr}
\tau_{n}\left(-c+\tilde{C}_{1}\tfrac{\sum_{k=n_{0}}^{n-1}\gamma^{2}_{k}}{\tau_{n}}+\tfrac{\sum_{k=n_{0}}^{n-1}\gamma_{k}\alpha_{k}}{2\tau_{n}}\norm{\boldlambda_{*}}_{2}^{2}+\tfrac{A}{\tau_{n}}\right).
	\end{equation}
	Now, \eqref{Eq:ClosedLoop} and \eqref{Eq:aoaoshshsjjhdggdhhss2} assert that:
	\begin{equation}
	\label{Eq:hajhsjsjskkdjjdd}
	\tilde{C}_{1}\tfrac{\sum_{k=n_{0}}^{n-1}\gamma^{2}_{k}}{\tau_{n}}+\tfrac{\norm{\boldlambda_{*}}_{2}^{2}\sum_{k=n_{0}}^{n-1}\gamma_{k}\alpha_{k}}{2\tau_{n}}+\tfrac{A}{\tau_{n}}\rightarrow 0,\quad\text{as }n\rightarrow\infty.
	\end{equation}
	Finally, since $\tau_{n}\rightarrow\infty$ as $n\rightarrow\infty$, we have from \eqref{Eq:hajhsjsjskkdjjdd} and \eqref{Eq:aoaojdhhddjeeerrr} that a.s. $\mathcal{V}_{n}\rightarrow-\infty$, $n\rightarrow\infty$. This contradicts with the fact that $\mathcal{V}_{n}\geq-\tilde{\rmF}(\bz_{*},\boldZ_{0})>-\infty$. Thus, we obtain as desired that a.s. $\boldZ_{n}\in U$ for infinitely many $n\geq 0$. 
\end{proof}
The final auxiliary step that we need for showing Theorem \ref{Thm:aoasjsjsjdhdjshshs}, is the following result in the spirit of Robbin-Siegmund's Lemma (see e.g., Thm. 1.3.12 in \cite{Duflo1997}):  
\begin{lemma}
	\label{Lem:aaosjjshhdhdhdhd}
	Suppose that the Assumptions \ref{Ass:aiishhfggfjhhdjjdhhdjjddd}, \ref{Ass:aiishhfggfjhhdjjdhhdjjddd2}, \ref{Eq:aioaoosjjshhdjjddss}, and \ref{Ass:aoasjsdhhggehegeeeerrrttt} hold, and that $(\gamma_{n})_{n\in\nat_{0}}$, $(\bM_{n})_{n\in\nat}$, and $(\alpha_{n})_{n\in\nat_{0}}$ fulfill \eqref{Eq:oaaosjsjdhpllalaala}  \eqref{Eq:ClosedLoop}, \eqref{Eq:aoaoshshsjjhdggdhhss2}, and \eqref{Eq:aooajsjsjskkjdjjjddd}.
	Let $\bz_{*}$ be a variational Nash equilibrium, i.e. solution of $\bz_{*}\in\VI(\X\times\real_{\geq 0}^{R},\tilde{\bv})$. Then a.s. $(\tilde{\rmF}(\bz_{*},\boldZ_{n}))_{n}$ converges to a finite RV.		
\end{lemma}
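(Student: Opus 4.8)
The plan is to recognize the claim as a direct application of the Robbins--Siegmund almost-supermartingale convergence theorem (Thm.~1.3.12 in \cite{Duflo1997}) to the non-negative process $V_{n}:=\tilde{\rmF}(\bz_{*},\boldZ_{n})=\rmF(\bx_{*},\boldY_{n})+\tfrac{1}{2}\norm{\boldLambda_{n}-\boldlambda_{*}}_{2}^{2}$, where $\bz_{*}=(\bx_{*},\boldlambda_{*})$; non-negativity is immediate from Proposition \ref{Prop:aaisshhfjffjfjfff}(1). The whole argument then reduces to producing a one-step conditional inequality $\Erw_{n}[V_{n+1}]\le V_{n}+\xi_{n}-\zeta_{n}$ with $\xi_{n},\zeta_{n}\ge 0$ adapted and $\sum_{n}\xi_{n}<\infty$ a.s., after which the cited theorem yields a.s. convergence of $(V_{n})_{n}$ to a finite limit (and, as a by-product, $\sum_{n}\zeta_{n}<\infty$ a.s.).

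To obtain that inequality I would not re-derive anything but read off the per-step increment estimates already established inside the proof of Theorem \ref{Thm:UpperBoundFench} (the bounds on $\mathcal{E}^{(1)}_{k+1}-\mathcal{E}^{(1)}_{k}$ and on the one-step change of $\tfrac12\norm{\boldLambda_{k}-\boldlambda}_{2}^{2}$, before telescoping). Summing the primal and dual increments at a single index $k$ and specializing $\boldlambda=\boldlambda_{*}$, $\bx=\bx_{*}$ gives $V_{k+1}-V_{k}\le \gamma_{k}\upeta_{k}(\bz_{*})+\gamma_{k}\Uppsi_{k}(\boldLambda_{k},\boldlambda_{*})+\tilde{C}_{1}\gamma_{k}^{2}+\gamma_{k}\tilde{\bM}_{k+1}(\bx_{*})+\tfrac{2}{K}\gamma_{k}^{2}\norm{\bM_{k+1}}_{*}^{2}$, where the two inner-product contributions are assembled into $\upeta_{k}(\bz_{*})=\inn{\boldZ_{k}-\bz_{*}}{\tilde{\bv}(\boldZ_{k})}$. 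Taking $\Erw_{k}[\cdot]$ removes the martingale increment, since $\boldX_{k}$ is $\mathcal{F}_{k}$-measurable and $\Erw_{k}[\bM_{k+1}]=0$. For the augmentation term I would insert \eqref{Eq:AugFunc} through the identity \eqref{Eq:AugFuncCond}; because the coefficient $\beta_{k}$ there is non-positive under \eqref{Eq:aooajsjsjskkjdjjjddd} (which, by the non-explosiveness of $(\boldLambda_{n})_{n}$ established in Lemma \ref{Lem:aoaosjsjshdhdjshssjjddd}, may be taken to hold for every $k$), the $\norm{\boldLambda_{k}}_{2}^{2}$-dependence drops and $\Uppsi_{k}(\boldLambda_{k},\boldlambda_{*})\le\tfrac{\alpha_{k}}{2}\norm{\boldlambda_{*}}_{2}^{2}$. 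Collecting, one sets $\xi_{k}:=\tfrac{\gamma_{k}\alpha_{k}}{2}\norm{\boldlambda_{*}}_{2}^{2}+\tilde{C}_{1}\gamma_{k}^{2}+\tfrac{2}{K}\gamma_{k}^{2}\Erw_{k}[\norm{\bM_{k+1}}_{*}^{2}]$ and $\zeta_{k}:=-\gamma_{k}\upeta_{k}(\bz_{*})$.

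It then remains to verify the two Robbins--Siegmund hypotheses. Summability of $(\xi_{k})_{k}$ splits into three pieces: $\sum_{k}\gamma_{k}\alpha_{k}<\infty$ is exactly \eqref{Eq:aoaoshshsjjhdggdhhss2}; $\sum_{k}\gamma_{k}^{2}<\infty$ is \eqref{Eq:ClosedLoop}; and $\sum_{k}\gamma_{k}^{2}\Erw_{k}[\norm{\bM_{k+1}}_{*}^{2}]<\infty$ a.s. is precisely the consequence of \eqref{Eq:oaaosjsjdhpllalaala} recorded in Remark \ref{Rem:aaauiusgsghsgshshsddd} (Fubini and the tower property applied to a non-negative series). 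Non-negativity of $\zeta_{k}$ is the sign $\upeta_{k}(\bz_{*})\le 0$: here I would use that $\tilde{\bv}$ is monotone --- inherited from monotonicity of $\bv$ together with convexity of each $\bg_{r}$ (Assumption \ref{Ass:aiishhfggfjhhdjjdhhdjjddd2}), which renders the extended KKT operator monotone --- and that $\bz_{*}$ solves $\VI(\X\times\real_{\geq 0}^{R},\tilde{\bv})$, so $\inn{\boldZ_{k}-\bz_{*}}{\tilde{\bv}(\boldZ_{k})}\le\inn{\boldZ_{k}-\bz_{*}}{\tilde{\bv}(\bz_{*})}\le 0$. With $\Erw[V_{0}]<\infty$ and $\sup_{n}\Erw[V_{n}]<\infty$ (a routine induction on the recursion, using the summability in expectation), the cited theorem applies and $\tilde{\rmF}(\bz_{*},\boldZ_{n})=V_{n}$ converges a.s. to a finite random variable.

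I expect the only genuinely delicate point to be the non-positivity of the drift $\upeta_{k}(\bz_{*})$: it hinges on correctly assembling the primal term $\inn{\boldX_{k}-\bx_{*}}{\bv(\boldX_{k})-\nabla\bg(\boldX_{k})^{\T}\boldLambda_{k}}$ and the dual term $\inn{\boldLambda_{k}-\boldlambda_{*}}{\bg(\boldX_{k})}$ into $\inn{\boldZ_{k}-\bz_{*}}{\tilde{\bv}(\boldZ_{k})}$ with the right sign conventions, and on invoking monotonicity of the extended operator $\tilde{\bv}$ (the non-strict version, weaker than the \emph{strict} monotonicity needed only in the recurrence Lemma \ref{Lem:aoaosjsjshdhdjshssjjddd}) together with the variational characterization of $\bz_{*}$. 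Everything else is bookkeeping on the summable error terms, with the conditional noise sum handled via Remark \ref{Rem:aaauiusgsghsgshshsddd}.
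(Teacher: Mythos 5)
Your proposal is correct and follows essentially the same route as the paper: you derive the same one-step conditional inequality from the per-step estimates behind Theorem \ref{Thm:UpperBoundFench}, control the augmentation term via \eqref{Eq:AugFuncCond} and \eqref{Eq:aooajsjsjskkjdjjjddd}, obtain $\upeta_{k}(\bz_{*})\leq 0$ from monotonicity of $\tilde{\bv}$ and the variational characterization of $\bz_{*}$, and handle the noise sums via \eqref{Eq:ClosedLoop}, \eqref{Eq:aoaoshshsjjhdggdhhss2}, and Remark \ref{Rem:aaauiusgsghsgshshsddd}. The only (immaterial) difference is that you invoke Robbins--Siegmund as a black box, whereas the paper re-proves that step inline by forming the compensated process $\tilde{\rmF}(\bz_{*},\boldZ_{n})-\sum_{k=0}^{n-1}\zeta_{k}$, showing it is a supermartingale bounded below, and applying the Martingale Convergence Theorem --- precisely the argument behind the theorem you cite.
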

\begin{proof}
W.l.o.g. we can assume that	\eqref{Eq:aooajsjsjskkjdjjjddd} holds for all $n$. Now, we have by \eqref{Eq:aiaishhfggfhfhffffak}:
	\begin{equation}
	\begin{split}
	&\tilde{\rmF}(\bz_{*},\boldZ_{n+1})
	%&\leq \rmF(\bx^{*},\boldY_{n})+\gamma_{n}\inn{G^{\Ward}(\boldX_{n})}{\boldX_{n}-\bx_{*}}+\gamma_{n}\uppsi_{n}+\tfrac{\gamma_{n}^{2}}{2K}\norm{\tilde{G}(\boldX_{n},\hat{\sigma}_{n})}\nonumber\\
	\leq  \tilde{\rmF}(\bz_{*},\boldZ_{n})+\gamma_{n}\upeta_{n}+\tfrac{\gamma_{n}\alpha_{n}\norm{\boldlambda_{*}}_{2}^{2}}{2}\\
	&+\gamma_{n}^{2}\tilde{C}_{1}+\gamma_{n}\tilde{\bM}_{n}+\gamma_{n}^{2}\norm{\bM_{n+1}}^{2}_{*},
	\end{split}
	\label{Eq:aoaosjsjsdjdjdhfgfhgfhdhdgfgf}
	\end{equation}
	
	To continue, notice that $\boldZ_{n}$ is $\mathcal{F}_{n}$ measureable and $\upeta_{n}=\upeta_{n}(\bz^{*})\leq 0$ for all $n$ (this follows from the assumptions that $\bz^{*}$ is the solution of $\VI(\X\times\real_{\geq 0}^{R},\tilde{\bv})$ and $\tilde{\bv}$ is monotone), and $\Erw_{n}[\tilde{\bM}_{n}]=0$. Therefore, taking the expectation of \eqref{Eq:aoaosjsjsdjdjdhfgfhgfhdhdgfgf} given $\mathcal{F}_{n}$, we obtain: 
	\begin{align}
	\Erw[\tilde{\rmF}(\bz_{*},\boldZ_{n+1})|\mathcal{F}_{n}]
	\leq  \tilde{\rmF}(\bz_{*},\boldZ_{n})+\zeta_{n},%+\gamma_{n}\upeta_{n},
	\label{Eq:aoaosjsjshdhdggfhff}
	\end{align}
%	where $\zeta_{n}$ is given by:
where $\zeta_{n}:=\tfrac{\gamma_{n}\alpha_{n}\norm{\boldlambda_{*}}_{2}^{2}}{2}+\gamma_{n}^{2}\tilde{C}_{1}+\gamma_{n}^{2}\Erw[\norm{\bM_{n+1}}^{2}_{*}|\mathcal{F}_{n}]$. For further step, Notice that by the assumptions \eqref{Eq:ClosedLoop}, \eqref{Eq:aoaoshshsjjhdggdhhss2}, \eqref{Eq:oaaosjsjdhpllalaala}, and Remark \ref{Rem:aaauiusgsghsgshshsddd}, $(\zeta_{n})_{n}$ is a non-negative $\mathbb{F}$-adapted sequence which fulfills $\sum_{k=0}^{\infty}\zeta_{k}<\infty$ a.s..
	%\begin{equation*}
	%\zeta_{n}:=\tfrac{\gamma_{n}\alpha_{n}\norm{\boldlambda_{*}}_{2}^{2}}{2}+\gamma_{n}^{2}\tilde{C}_{1}+\gamma_{n}^{2}\Erw[\norm{\bM_{n+1}}^{2}_{*}|\mathcal{F}_{n}].
	%\end{equation*}

	Now, we aim now to exploit \eqref{Eq:aoaosjsjshdhdggfhff}. For this sake, consider the sequence $\boldY_{n}:=\tilde{\rmF}(\bz_{*},\boldZ_{n})-\sum_{k=0}^{n-1}\zeta_{k}$. 
Clearly, $(\boldY_{n})$ is $\mathbb{F}$-adapted and integrable. Moreover, the fact that $(\zeta_{n})$ is $\mathbb{F}$-adapted gives $\Erw_{n}[\boldY_{n+1}]=\Erw_{n}[\tilde{\rmF}(\bz_{*},\boldZ_{n+1})]-\sum_{k=0}^{n}\zeta_{k}$.
%	\begin{align*}
%	&\Erw_{n}[\boldY_{n+1}]=\Erw_{n}[\tilde{\rmF}(\bz_{*},\boldZ_{n+1})]-\Erw_{n}\left[ \sum_{k=0}^{n}\zeta_{k}\right] \\
%	&=\Erw_{n}[\tilde{\rmF}(\bz_{*},\boldZ_{n+1})]-\sum_{k=0}^{n}\zeta_{k}.
%	\end{align*}
	So combining this and \eqref{Eq:aoaosjsjshdhdggfhff}, we have $\Erw_{n}[\boldY_{n+1}]\leq \boldY_{n}$,
%	\begin{align*}
%	\Erw_{n}[Y_{n+1}]&\leq\tilde{\rmF}(\bz_{*},\boldZ_{n})+\zeta_{n}-\sum_{k=0}^{n}\zeta_{k}
%	\\
%	&=\tilde{\rmF}(\bz_{*},\boldZ_{n})-\sum_{k=0}^{n-1}\zeta_{k}=\boldY_{n},
%	\end{align*}
	and consequently $(\boldY_{n})$ is a supermartingale. Notice that this supermartingale fulfills $\boldY_{n}\geq -\sum_{k=0}^{\infty}\zeta_{k}>-\infty$, for all $n\geq 0$.
%	\begin{equation*}
%	Y_{n}\geq -\sum_{k=0}^{\infty}\zeta_{k}>-\infty,\quad\text{a.s.}
%	\end{equation*}
	Therefore, the Martingale Convergence Theorem asserts that $(\boldY_{n})$ a.s. converges. Finally, since $0\leq\sum_{k=0}^{\infty}\zeta_{k}<\infty$ a.s., it follows that $(\tilde{\rmF}(\bz_{*},\boldZ_{n}))_{n}$ converges a.s., as desired.
	%	Moreover $\upeta_{n}=\upeta_{n}(\bz^{*})\leq 0$ for all $n$ since $\bz^{*}$ is the solution of $\VI(\X\times\real_{\geq 0}^{R},\tilde{\bv})$ and since $\tilde{\bv}$ is monotone. Consequently Robbins-Siegmund Lemma (see Theorem \ref{Thm:RobbinsSiegmund}) asserts that $\tilde{\rmF}(\bz_{*},\boldZ_{n})$ converges a.s. to a finite RV.
\end{proof}
Now, we are ready to prove our main result:
\begin{proof}[Proof of Theorem \ref{Thm:aoasjsjsjdhdjshshs}]
	Notice first that Lemma \ref{Lem:aoaosjsjshdhdjshssjjddd} asserts (a.s.) the existence of a subsequence $(\boldZ_{n_{k}})_{k}$ of $(\boldZ_{n})_{n}$, which converges to $\bz_{*}$ the unique solution of $\VI(\X\times\real_{\geq 0}^{R},\tilde{\bv})$. Furthermore, the reprocity condition asserts that $\rmF(\bz_{*},\boldZ_{n_{k}})\rightarrow 0$. Next, Lemma \ref{Lem:aaosjjshhdhdhdhd} asserts that $(\rmF(\bz_{*},\boldZ_{n}))_{n}$ converges a.s. to a finite random variable and therefore a.s. a Cauchy sequence. As a consequence of the previous observations, for any $\epsilon>0$, a.s., we can choose $n_{0}$ large enough s.t. for all $n_{k},n,\tilde{n}\geq n_{0}$, $\rmF(\bz_{*},\boldZ_{n_{k}})\leq \epsilon/2$ and $\rmF(\bz_{*},\boldZ_{\tilde{n}})\leq \rmF(\bz_{*},\boldZ_{n})+ \epsilon/2$.
%	\begin{equation*}
%	\rmF(\bz_{*},\boldZ_{n_{k}})\leq \epsilon/2,\quad \rmF(\bz_{*},\boldZ_{\tilde{n}})\leq \rmF(\bz_{*},\boldZ_{n})+ \epsilon/2.
%	\end{equation*}
	So for the latter event, we have for $n,n_{k}\geq n_{0}$, $\rmF(\bz_{*},\boldZ_{n})\leq \rmF(\bz_{*},\boldZ_{n_{k}})+ \epsilon/2\leq \epsilon$.
%	\begin{equation*}
%	\rmF(\bz_{*},\boldZ_{n})\leq \rmF(\bz_{*},\boldZ_{n_{k}})+ \epsilon/2\leq \epsilon.
%	\end{equation*}
	Thus a.s. $\rmF(\bz_{*},\boldZ_{n})\rightarrow 0$ as $n\rightarrow\infty$ and Proposition \ref{Prop:aaisshhfjffjfjfff} asserts that $(\boldZ_{n})$ converges a.s. to $\bz_{*}=(\bx_{*},\boldlambda_{*})$ as $n\rightarrow\infty$. For the desired statement, notice that by Proposition \ref{Prop:aiaisshshjdhdggddd3}, $\bx^{*}$ is the unique variational Nash equilibrium.  
\end{proof}
\section{Resource Constraint Violation Analysis}
\label{Sec:aiaiishshggsshgshsss}
Theorem \ref{Thm:aoasjsjsjdhdjshshs} asserts that in the long term and under suitable choices of algorithm parameters, the population iterate a.s. satisfies the coupled resource constraint. However, the guarantee is purely asymptotic, i.e., we only know that for large times, the population's iterate almost fulfills the resource constraints. Motivated to eliminate this drawback, we investigate in this section the non-asymptotic behavior of the iterate guarantee. In particular, we are interested in how the amount of resource constraint violation of MAARP decays with time. To make the corresponding result more accessible, we assume in this section the following:
\begin{assum}
	\label{Assum:aaishshdgdgdhdgdgd}
For all $n\in\nat_{0}$, $\alpha_{n}=\alpha\gamma_{n}$ for an $\alpha>0$.
The trackability condition \eqref{Eq:aooajsjsjskkjdjjjddd} holds for all $n\in\nat$, which is in this context:
\begin{equation}
\label{Eq:aaosjsjdhdhdgdgfgfgfgfff}
\alpha^{2}\gamma_{n}-\tfrac{\alpha}{2}+\tfrac{C_{1}^{2}}{K}\leq 0,\quad\forall n\in\nat_{0}.
\end{equation}
The noise is persistent in the sense that there exists $\sigma>0$ s.t. $\sigma_{k}\leq \sigma$ for all $k\in\nat$.
\end{assum}

Our measure for the resource constraint violation of MAARP is the following:   
\begin{equation*}
\text{CVio}_{n}^{r}:=\tfrac{\sum_{k=0}^{n-1}\gamma_{k}\bg_{r}(\boldX_{k})}{\sum_{k=0}^{n-1}\gamma_{k}}.
\end{equation*}
Since $\bg_{r}$ is convex for every $r\in [R]$, it follows from Jensen's inequality that $\text{CVio}_{n}^{r}$ is an upper bound of the constraint violation on the resource $r$ caused by the ergodic average of the population iterate at time $n-1$, i.e.:
\begin{equation}
\label{Eq:aiaiasshsgdgddhsgsgsgdffddd}
\bg_{r}(\overline{\boldX}_{n})\leq \CVio_{n}^{r},\quad\text{where}~\overline{\boldX}_{n}:=\tfrac{\sum_{k=0}^{n-1}\gamma_{k}\boldX_{k}}{\sum_{k=0}^{n-1}\gamma_{k}}.
\end{equation}

We start with the following auxiliary statement, which asserts that one can deduce the information about the load from the prices:
\begin{lemma} Suppose that $\boldLambda_{0}=0$.
	For all $r\in [R]$ and $n\in\nat$:
\begin{equation*}
\CVio_{n}^{r}\leq \tfrac{\norm{\boldLambda_{n}}_{2}+\sum_{k=0}^{n-1}\gamma_{k}\alpha_{k}\norm{\boldLambda_{k}}_{2}}{\sum_{k=0}^{n-1}\gamma_{k}}.
\end{equation*}
\end{lemma}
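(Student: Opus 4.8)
The plan is to exploit the price update rule \eqref{Eq:aassjhshshshsgdgdfdfdfdgdgdfdfd} together with the one decisive property of the Euclidean projection onto the nonnegative half-line, namely that $\bfPi_{\real_{\geq 0}}(x)\geq x$ for every $x\in\real$. Concretely, for the augmentation function \eqref{Eq:AugFunc} one has $\nabla_{\boldLambda_{n}}\uptheta_{n}(\boldLambda_{n})=\alpha_{n}\boldLambda_{n}$, so the per-resource update reads $\boldLambda_{n+1}^{r}=\bfPi_{\real_{\geq 0}}(\boldLambda_{n}^{r}+\gamma_{n}[\bg_{r}(\boldX_{n})-\alpha_{n}\boldLambda_{n}^{r}])$. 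Discarding the projection in the favourable direction via the lower bound above yields $\boldLambda_{n+1}^{r}\geq \boldLambda_{n}^{r}+\gamma_{n}\bg_{r}(\boldX_{n})-\gamma_{n}\alpha_{n}\boldLambda_{n}^{r}$, which I would rearrange into the key one-step inequality
\[
\gamma_{n}\bg_{r}(\boldX_{n})\leq (\boldLambda_{n+1}^{r}-\boldLambda_{n}^{r})+\gamma_{n}\alpha_{n}\boldLambda_{n}^{r}.
\]

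Next I would sum this inequality over $k=0,\dots,n-1$. The difference term on the right telescopes to $\boldLambda_{n}^{r}-\boldLambda_{0}^{r}=\boldLambda_{n}^{r}$, where the hypothesis $\boldLambda_{0}=0$ is used. This produces $\sum_{k=0}^{n-1}\gamma_{k}\bg_{r}(\boldX_{k})\leq \boldLambda_{n}^{r}+\sum_{k=0}^{n-1}\gamma_{k}\alpha_{k}\boldLambda_{k}^{r}$. Finally, since the iterates remain in $\real_{\geq 0}^{R}$ by construction (the projection guarantees $\boldLambda_{k}^{r}\geq 0$), every scalar entry is dominated by the Euclidean norm of the whole price vector, $\boldLambda_{k}^{r}=\abs{\boldLambda_{k}^{r}}\leq \norm{\boldLambda_{k}}_{2}$ (and likewise $\boldLambda_{n}^{r}\leq\norm{\boldLambda_{n}}_{2}$); together with $\gamma_{k},\alpha_{k}>0$ this upgrades the estimate to $\sum_{k=0}^{n-1}\gamma_{k}\bg_{r}(\boldX_{k})\leq \norm{\boldLambda_{n}}_{2}+\sum_{k=0}^{n-1}\gamma_{k}\alpha_{k}\norm{\boldLambda_{k}}_{2}$. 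Dividing through by $\tau_{n}=\sum_{k=0}^{n-1}\gamma_{k}$ recovers exactly the claimed bound for $\CVio_{n}^{r}$.

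The argument is essentially mechanical, so I do not expect a genuine obstacle; the only step requiring a moment's thought is the very first one, namely recognizing that the nonexpansive projection onto $\real_{\geq 0}$ may be dropped in the direction $\bfPi_{\real_{\geq 0}}(x)\geq x$. This converts the recursive, projected price dynamic into a clean telescoping inequality, after which the cancellation enabled by $\boldLambda_{0}=0$ and the elementary componentwise norm bound finish the proof.
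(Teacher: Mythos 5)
Your proof is correct and takes essentially the same route as the paper's: the one-step lower bound $\boldLambda^{r}_{k+1}\geq\boldLambda^{r}_{k}+\gamma_{k}\bg_{r}(\boldX_{k})-\gamma_{k}\alpha_{k}\boldLambda^{r}_{k}$, telescoping with $\boldLambda_{0}=0$, and the componentwise estimate $\boldLambda_{k}^{r}\leq\norm{\boldLambda_{k}}_{2}$ justified by nonnegativity of the prices. The only (welcome) difference is that you spell out explicitly why the projection can be dropped, namely $\bfPi_{\real_{\geq 0}}(x)\geq x$, which the paper leaves implicit in the phrase ``the definition of our price policy gives.''
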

\begin{proof}
The definition of our price policy gives $\boldLambda^{r}_{k+1}\geq\boldLambda^{r}_{k}+\gamma_{k}\bg_{r}(\boldX_{k})-\gamma_{k}\alpha_{k}\boldLambda^{r}_{k}$.
%\begin{align*}
%\boldLambda^{r}_{k+1}\geq\boldLambda^{r}_{k}+\gamma_{k}\bg_{r}(\boldX_{k})-\gamma_{k}\alpha_{k}\boldLambda^{r}_{k}.
%\end{align*}
So by summing this inequality and subsequent telescoping, we have, since $\boldLambda_{0}=0$, $\sum_{k=0}^{n-1}\gamma_{k}\bg_{r}(\boldX_{k})\leq \boldLambda_{n}^{r}+\sum_{k=0}^{n-1}\gamma_{k}\alpha_{k}\boldLambda_{k}^{r}$. Now, this inequality
%\begin{equation*}
%\sum_{k=0}^{n-1}\gamma_{k}\bg_{r}(\boldX_{k})\leq \boldLambda_{n}^{r}+\sum_{k=0}^{n-1}\gamma_{k}\alpha_{k}\boldLambda_{k}^{r}.
%\end{equation*}
and the fact that $\boldLambda^{r}_{k}\geq 0$ give $\boldLambda_{k}^{r}\leq\sqrt{\sum_{r=1}^{R}(\boldLambda_{k}^{r})^{2}}=\norm{\boldLambda_{k}}_{2}$.
%:
%\begin{equation*}
%\boldLambda_{k}^{r}\leq\sqrt{\sum_{r=1}^{R}(\boldLambda_{k}^{r})^{2}}=\norm{\boldLambda_{k}}_{2}.
%\end{equation*}
Therefore as desired $\sum_{k=0}^{n-1}\gamma_{k}\bg_{r}(\boldX_{k})\leq \norm{\boldLambda_{n}}_{2}+\sum_{k=0}^{n-1}\gamma_{k}\alpha_{k}\norm{\boldLambda_{k}}_{2}$.
%\begin{equation*}
%\sum_{k=0}^{n-1}\gamma_{k}\bg_{r}(\boldX_{k})\leq \norm{\boldLambda_{n}}_{2}+\sum_{k=0}^{n-1}\gamma_{k}\alpha_{k}\norm{\boldLambda_{k}}_{2}.
%\end{equation*}
\end{proof}
%In order that $\text{CVio}^{r}_{n}$ decays with $n$,  
%So, if $\norm{\boldLambda_{k}}_{2}\leq \mathcal{O}(1)$, then we can conclude that:
%\begin{equation*}
%\text{CVio}^{r}_{n}\leq\mathcal{O}\left( \tfrac{\sum_{k=0}^{n-1}\gamma_{k}\alpha_{k}}{\sum_{k=0}^{n-1}\gamma_{k}}\right) 
%\end{equation*}
Therefore to show the decay $\text{CVio}_{n}^{r}$, we only need to investigate the growth of the prices:  
\begin{lemma}
Suppose that $\boldLambda_{0}=0$ and $\boldY_{0}=0$. Then under Assumption \ref{Assum:aaishshdgdgdhdgdgd}, it holds: 
\begin{equation*}
\Erw[\norm{\boldLambda_{n}}_{2}]\leq \mathcal{O}\left((1+\sigma)\sqrt{\sum_{k=0}^{n-1}\gamma_{k}^{2}} \right) .
\end{equation*}	
\end{lemma}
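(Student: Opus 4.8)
The plan is to control $\Erw[\norm{\boldLambda_{n}}_{2}]$ by centering the dual iterate not at the origin, but at the dual part of a variational Nash equilibrium, which lets me re-use almost verbatim the one-step estimate already derived in the proof of Lemma \ref{Lem:aaosjjshhdhdhdhd}. Concretely, I would fix a variational Nash equilibrium $\bx_{*}$ together with an associated multiplier $\boldlambda_{*}$ furnished by Slater's condition (Assumption \ref{Eq:aioaoosjjshhdjjddss}), so that $\bz_{*}=(\bx_{*},\boldlambda_{*})$ solves $\VI(\X\times\real_{\geq 0}^{R},\tilde{\bv})$, and work with the combined distance $\tilde{\rmF}(\bz_{*},\boldZ_{n})=\rmF(\bx_{*},\boldY_{n})+\tfrac{1}{2}\norm{\boldLambda_{n}-\boldlambda_{*}}_{2}^{2}$. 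The reason $\tilde{\rmF}$ is the right object is that the dangerous cross term $\langle\boldLambda_{n},\bg(\boldX_{n})\rangle$, which a naive estimate of the price recursion alone would leave behind, is cancelled by the matching term produced by the primal update through the convexity of $\bg$; this cancellation is exactly what is already encoded in the one-step inequality \eqref{Eq:aoaosjsjsdjdjdhfgfhgfhdhdgfgf}.

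Building on that, I would first invoke \eqref{Eq:aoaosjsjsdjdjdhfgfhgfhdhdgfgf},
\[\tilde{\rmF}(\bz_{*},\boldZ_{n+1})\leq \tilde{\rmF}(\bz_{*},\boldZ_{n})+\gamma_{n}\upeta_{n}+\tfrac{\gamma_{n}\alpha_{n}\norm{\boldlambda_{*}}_{2}^{2}}{2}+\gamma_{n}^{2}\tilde{C}_{1}+\gamma_{n}\tilde{\bM}_{n}+\gamma_{n}^{2}\norm{\bM_{n+1}}_{*}^{2},\]
which is valid for every $n$ since Assumption \ref{Assum:aaishshdgdgdhdgdgd} forces the trackability condition \eqref{Eq:aooajsjsjskkjdjjjddd} to hold for all $n$. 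Because $\bz_{*}$ solves the variational inequality and $\tilde{\bv}$ is monotone, $\upeta_{n}=\upeta_{n}(\bz_{*})\leq 0$, so this term is discarded. Taking total expectations, the martingale increment vanishes, $\Erw[\tilde{\bM}_{n}]=\Erw[\langle\boldX_{n}-\bx_{*},\Erw_{n}[\bM_{n+1}]\rangle]=0$ since $\boldX_{n}$ is $\mathcal{F}_{n}$-measurable, while the noise-energy term is bounded using persistence, $\Erw[\norm{\bM_{n+1}}_{*}^{2}]=\sigma_{n+1}^{2}\leq\sigma^{2}$.

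Next I would telescope from $0$ to $n-1$ and use $\alpha_{k}=\alpha\gamma_{k}$ (Assumption \ref{Assum:aaishshdgdgdhdgdgd}), so that $\sum_{k}\gamma_{k}\alpha_{k}=\alpha\sum_{k}\gamma_{k}^{2}$; together with $\boldLambda_{0}=0$, $\boldY_{0}=0$, which make $\tilde{\rmF}(\bz_{*},\boldZ_{0})=\rmF(\bx_{*},0)+\tfrac{1}{2}\norm{\boldlambda_{*}}_{2}^{2}$ a fixed constant, this gives
\[\Erw[\tilde{\rmF}(\bz_{*},\boldZ_{n})]\leq \tilde{\rmF}(\bz_{*},\boldZ_{0})+\Big(\tfrac{\alpha\norm{\boldlambda_{*}}_{2}^{2}}{2}+\tilde{C}_{1}+\sigma^{2}\Big)\sum_{k=0}^{n-1}\gamma_{k}^{2}.\]
Since $\rmF\geq 0$ yields $\tilde{\rmF}(\bz_{*},\boldZ_{n})\geq\tfrac{1}{2}\norm{\boldLambda_{n}-\boldlambda_{*}}_{2}^{2}$, and since the fixed constants $\tilde{\rmF}(\bz_{*},\boldZ_{0})$ and $\norm{\boldlambda_{*}}_{2}$ are absorbed into $\mathcal{O}(\sum_{k}\gamma_{k}^{2})$ using $\sum_{k=0}^{n-1}\gamma_{k}^{2}\geq\gamma_{0}^{2}>0$, I obtain $\Erw[\norm{\boldLambda_{n}-\boldlambda_{*}}_{2}^{2}]\leq\mathcal{O}\big((1+\sigma^{2})\sum_{k=0}^{n-1}\gamma_{k}^{2}\big)$. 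Finally, Jensen's inequality $\Erw[\norm{\boldLambda_{n}-\boldlambda_{*}}_{2}]\leq(\Erw[\norm{\boldLambda_{n}-\boldlambda_{*}}_{2}^{2}])^{1/2}$, the triangle inequality $\norm{\boldLambda_{n}}_{2}\leq\norm{\boldLambda_{n}-\boldlambda_{*}}_{2}+\norm{\boldlambda_{*}}_{2}$, and $\sqrt{1+\sigma^{2}}\leq 1+\sigma$ deliver the claim.

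The only genuinely delicate point is the centering at $\boldlambda_{*}$ and the appeal to monotonicity to annihilate $\upeta_{n}$: centering at the origin (as in the preceding lemma bounding $\CVio$) would leave the uncontrolled term $\sum_{k}\gamma_{k}\langle\boldLambda_{k},\bg(\boldX_{k})\rangle$, which cannot be dominated by $\sum_{k}\gamma_{k}^{2}$ and would degrade the bound to order $\sqrt{\sum_{k}\gamma_{k}}$ or worse. Everything after that is bookkeeping on the recursion \eqref{Eq:aoaosjsjsdjdjdhfgfhgfhdhdgfgf}, the identity $\sum_{k}\gamma_{k}\alpha_{k}=\alpha\sum_{k}\gamma_{k}^{2}$, and the persistence of the noise.
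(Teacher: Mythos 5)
Your proof is correct and follows essentially the same route as the paper's: the paper likewise telescopes the expectation of the one-step recursion \eqref{Eq:aoaosjsjsdjdjdhfgfhgfhdhdgfgf} (in its conditioned form \eqref{Eq:aoaosjsjshdhdggfhff}), centered at the variational equilibrium $\bz_{*}=(\bx_{*},\boldlambda_{*})$, lower-bounds $\tilde{\rmF}(\bz_{*},\boldZ_{n})$ by $\tfrac{1}{2}\norm{\boldLambda_{n}-\boldlambda_{*}}_{2}^{2}$, and finishes with Jensen's and the triangle inequality. The only cosmetic difference is that the paper makes the initial constant $\tilde{\rmF}(\bz_{*},\boldZ_{0})$ explicit via the regularizer depth $\Delta\uppsi$ in \eqref{Eq:aiaishshhdhddgdgdhdgddd}, whereas you absorb it into the $\mathcal{O}$-notation.
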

\begin{proof}
By taking the expectation of \eqref{Eq:aoaosjsjshdhdggfhff}, and subsequent summing and telescoping, we can conclude that $\Erw\left[ \tilde{\rmF}(\bz_{*},\boldZ_{n})\right] \leq \tilde{\rmF}(\bz_{*},\boldZ_{0})+\tfrac{\sum_{k=0}^{n-1}\gamma_{k}\alpha_{k}\norm{\boldlambda_{*}}_{2}^{2}}{2}+\sum_{k=0}^{n-1}\gamma_{k}^{2}\tilde{C}_{1}
+\sum_{k=0}^{n-1}\gamma_{k}^{2}\sigma_{k+1}^{2}$.
%\begin{align*}
%\Erw\left[ \tilde{\rmF}(\bz_{*},\boldZ_{n})\right] &\leq \tilde{\rmF}(\bz_{*},\boldZ_{0})+\tfrac{\sum_{k=0}^{n-1}\gamma_{k}\alpha_{k}\norm{\boldlambda_{*}}_{2}^{2}}{2}+\sum_{k=0}^{n-1}\gamma_{k}^{2}\tilde{C}_{1}\\
%&+\sum_{k=0}^{n-1}\gamma_{k}^{2}\sigma_{k+1}^{2}.
%\end{align*}
Consequently, since $\boldZ_{0}=0$:
%\todo[inline]{Need: initialization $\boldLambda_{0}=0$ and $\boldY_{0}=0$, and $\tilde{\rmF}(\bz_{*},\boldZ_{n})=\rmF(\bx_{*},\boldX_{n})+\norm{\boldLambda_{n}-\boldlambda_{*}}_{2}^{2}/2\geq\norm{\boldLambda_{n}-\boldlambda_{*}}_{2}^{2}/2 $}
\begin{align*}
\Erw[\tfrac{\norm{\boldLambda_{n}-\boldlambda_{*}}^{2}}{2}]
%&\leq  \rmF(\bx_{*},0)+\tfrac{\norm{\boldlambda_{*}}_{2}^{2}}{2} +\sum_{k=0}^{n-1}\gamma_{k}\alpha_{k}\norm{\boldlambda_{*}}_{2}^{2}+\sum_{k=0}^{n-1}\gamma_{k}^{2}\tilde{C}_{1}\\
%&+\sum_{k=0}^{n-1}\gamma_{k}^{2}\sigma_{k+1}^{2}\\
&\leq \Delta\uppsi+\tfrac{\norm{\boldlambda_{*}}_{2}^{2}}{2}+\tfrac{\sum_{k=0}^{n-1}\gamma_{k}\alpha_{k}\norm{\boldlambda_{*}}_{2}^{2}}{2}+\sum_{k=0}^{n-1}\gamma_{k}^{2}\tilde{C}_{1}\\
&+\sum_{k=0}^{n-1}\gamma_{k}^{2}\sigma_{k+1}^{2},
\end{align*}
where:
\begin{equation}
\label{Eq:aiaishshhdhddgdgdhdgddd}
\Delta\uppsi=\sum_{i=1}^{N}\left[ \max_{\bx^{(i)}\in\X_{i}}\uppsi_{i}(\bx^{(i)})-\min_{\bx^{(i)}\in\X_{i}}\uppsi_{i}(\bx^{(i)})\right].
\end{equation}
%\begin{equation*}
%\Delta\uppsi=\sum_{i=1}^{N}\left[ \max_{\bx^{(i)}\in\X_{i}}\uppsi_{i}(\bx^{(i)})-\min_{\bx^{(i)}\in\X_{i}}\uppsi_{i}(\bx^{(i)})\right] 
%\end{equation*}
Thus by Jensen's inequality and triangle inequality:
\begin{align*}
\left( \Erw[\norm{\boldLambda_{n}}_{2}]-\norm{\boldlambda_{*}}_{2}\right) ^{2}&\leq 2 \Delta\uppsi+\norm{\boldlambda_{*}}_{2}^{2} +\sum_{k=0}^{n-1}\gamma_{k}\alpha_{k}\norm{\boldlambda_{*}}_{2}^{2}\\&+2\sum_{k=0}^{n-1}\gamma_{k}^{2}\tilde{C}_{1}
+2\sum_{k=0}^{n-1}\gamma_{k}^{2}\sigma_{k+1}^{2}.
\end{align*}
The desired statement follows from above since $\alpha_{k}=\alpha \gamma_{k}$ and $\sigma_{k}\leq \sigma$ for all $k$.  
%\begin{equation*}
%\Erw[\norm{\boldLambda_{n}}_{2}]\leq \mathcal{O}\left( (1+\sigma)\sqrt{\sum_{k=0}^{n-1}\gamma_{k}^{2}}\right) 
%\end{equation*}
\end{proof}
Now, if $\gamma_{k}=\Theta(1/\sqrt{k})$, we have $\Erw[\norm{\boldLambda_{n}}_{2}]\leq\mathcal{O}((1+\sigma)\sqrt{\ln(n)})$.
%\begin{equation*}
%\Erw[\norm{\boldLambda_{n}}_{2}]\leq\mathcal{O}((1+\sigma)\sqrt{\ln(n)}).
%\end{equation*}
Moreover, since:
\begin{align*}
\sum_{k=0}^{n-1}\gamma_{k}\alpha_{k}\Erw[\norm{\boldLambda_{k}}_{2}]&\leq\alpha\sum_{k=0}^{n-1}\gamma_{k}^{2}\mathcal{O}(\sqrt{\ln(k)})\\
&\leq\mathcal{O}(\sqrt{\ln(n)})\sum_{k=0}^{n-1}\gamma_{k}^{2},
\end{align*}
it holds $\Erw[\text{CVio}^{r}_{n}]\leq \mathcal{O}(\ln^{3/2}(n)/\sqrt{n})$.
%\begin{equation*}
%\Erw[\text{CVio}^{r}_{n}]\leq \mathcal{O}(\ln(n)/\sqrt{n}).
%\end{equation*}
This observation and \eqref{Eq:aiaiasshsgdgddhsgsgsgdffddd} immediately give the following non-asymptotic result for the violation of the coupled constraint:
\begin{theorem}
\label{thm:ErgConv}
Suppose that $\gamma_{k}=\Theta(1/\sqrt{k})$, then for every $r\in [R]$, we have the guarantee $\Erw[\bg_{r}(\overline{\boldX}_{n})]\leq \mathcal{O}(\ln^{3/2}(n)/\sqrt{n})$ 
\end{theorem}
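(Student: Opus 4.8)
The plan is to treat this theorem as a direct corollary of the two preceding auxiliary lemmas (the one bounding $\CVio_n^r$ in terms of the prices, and the one bounding $\Erw[\norm{\boldLambda_n}_2]$) together with the elementary asymptotic bookkeeping for the choice $\gamma_k=\Theta(1/\sqrt{k})$; almost all the substantive work has therefore already been done. First I would observe that the Jensen-type bound \eqref{Eq:aiaiasshsgdgddhsgsgsgdffddd} gives $\bg_r(\overline{\boldX}_n)\leq\CVio_n^r$ pointwise, so taking expectations reduces the claim to establishing $\Erw[\CVio_n^r]\leq\mathcal{O}(\ln^{3/2}(n)/\sqrt{n})$.

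Next I would invoke the first auxiliary lemma, which (under $\boldLambda_0=0$) bounds $\CVio_n^r$ by $(\norm{\boldLambda_n}_2+\sum_{k=0}^{n-1}\gamma_k\alpha_k\norm{\boldLambda_k}_2)/\sum_{k=0}^{n-1}\gamma_k$. Since this estimate is deterministic and linear in the prices, taking expectations yields
\begin{equation*}
\Erw[\CVio_n^r]\leq\frac{\Erw[\norm{\boldLambda_n}_2]+\sum_{k=0}^{n-1}\gamma_k\alpha_k\Erw[\norm{\boldLambda_k}_2]}{\sum_{k=0}^{n-1}\gamma_k}.
\end{equation*}
The remaining task is to estimate the three sums. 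From the price-growth lemma, valid under Assumption \ref{Assum:aaishshdgdgdhdgdgd} with $\boldLambda_0=\boldY_0=0$, I would substitute $\Erw[\norm{\boldLambda_k}_2]\leq\mathcal{O}((1+\sigma)\sqrt{\sum_{j=0}^{k-1}\gamma_j^2})$.

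Then I would carry out the harmonic-series estimates induced by $\gamma_k=\Theta(1/\sqrt{k})$ (with the usual convention that $\gamma_k$ has the form $\gamma/\sqrt{k+1}$ to avoid the singularity at $k=0$): namely $\sum_{k=0}^{n-1}\gamma_k^2=\Theta(\ln n)$ and $\sum_{k=0}^{n-1}\gamma_k=\Theta(\sqrt{n})$. The first gives $\Erw[\norm{\boldLambda_n}_2]=\mathcal{O}((1+\sigma)\sqrt{\ln n})$. For the weighted sum, using $\alpha_k=\alpha\gamma_k$ and monotonicity of $\sqrt{\ln(\cdot)}$ to pull $\sqrt{\ln k}\leq\sqrt{\ln n}$ out of the sum, I obtain $\sum_{k=0}^{n-1}\gamma_k\alpha_k\Erw[\norm{\boldLambda_k}_2]\leq\mathcal{O}(\sqrt{\ln n})\sum_{k=0}^{n-1}\gamma_k^2=\mathcal{O}(\ln^{3/2}n)$. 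Hence the numerator is $\mathcal{O}(\ln^{3/2}n)$ and, dividing by the denominator $\Theta(\sqrt{n})$, I conclude $\Erw[\CVio_n^r]\leq\mathcal{O}(\ln^{3/2}(n)/\sqrt{n})$, which together with the first reduction yields the theorem.

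Because the genuinely delicate estimate — the bound on $\Erw[\norm{\boldLambda_n}_2]$ — is already packaged in the price-growth lemma (which itself rests on the supermartingale recursion \eqref{Eq:aoaosjsjshdhdggfhff}), the main obstacle at the level of this theorem is merely the careful tracking of the logarithmic factors. The one point deserving attention is the extraction of $\sqrt{\ln k}$ from the weighted sum: one must check that replacing $\sqrt{\ln k}$ by its terminal value $\sqrt{\ln n}$ loses no polynomial factor, and that the resulting product $\sqrt{\ln n}\cdot\Theta(\ln n)$ is indeed $\Theta(\ln^{3/2}n)$ rather than a larger power. Everything else is routine substitution.
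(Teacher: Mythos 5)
Your proposal is correct and follows essentially the same route as the paper: the pointwise Jensen reduction $\bg_r(\overline{\boldX}_n)\leq\CVio_n^r$, the price-based bound on $\CVio_n^r$ from the first lemma, the estimate $\Erw[\norm{\boldLambda_k}_2]\leq\mathcal{O}((1+\sigma)\sqrt{\ln k})$ from the price-growth lemma, and the harmonic-sum bookkeeping $\sum_{k=0}^{n-1}\gamma_k^2=\Theta(\ln n)$, $\sum_{k=0}^{n-1}\gamma_k=\Theta(\sqrt{n})$ with $\alpha_k=\alpha\gamma_k$, yielding the numerator $\mathcal{O}(\ln^{3/2}n)$ and hence the claimed rate. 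The only difference is cosmetic ordering (you apply Jensen at the start, the paper at the end), so nothing substantive distinguishes the two arguments.
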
 

\begin{remark}
	\label{Rem:aaiisgsgsgsffsdsdss}
By choosing a mirror map different than the Euclidean projection, one can obtain a bound in Theorem \ref{thm:ErgConv} that has weaker dimensional dependence. 
 For instance, consider the case where $\mathcal{X}_{i}$ is a $D_{i}$-dimensional simplex and the case where the noise is a sequence of independent Gaussian normal vector in $\real^{D}$. In the case where, $\norm{\cdot}_{i}$ is the $2$-norm dual to itself,  $\uppsi_{i}$ is the Euclidean norm, and the mirror maps are Euclidean projections, it holds  $\Delta\uppsi=\sum_{i=1}^{N}\sqrt{D_{i}}$ and $\sigma^{2}=\sum_{i=1}^{N}D_{i}$. In contrast, we have in case $\norm{\cdot}_{i}$ is the $1$-norm dual whose dual is the $\infty$-norm, $\uppsi_{i}$ is the Gibbs entropy, and the mirror maps are logit choices, it holds $\Delta\uppsi=\sum_{i=1}^{N}\ln(D_{i})$ and $\sigma^{2}\leq \sum_{i=1}^{N}(2(\sqrt{\ln(D_{i})}+\ln(D_{i}))+1)$, where $C>0$ is a universal constant (for the latter see Example 2.7 in \cite{Boucheron2013}).
\end{remark}
\section{Convergence Analysis for Ergodic Average}
\label{Sec:iaaiiaahsgshgshhsssss}
Theorem \ref{thm:ErgConv} guarantees the decaying coupled constraints violation in expectation of the ergodic average of the population actions for step size $\gamma_{k}$ and augmentation sequence $\alpha_{k}$ of order $\Theta(1/\sqrt{k})$. In contrast, Theorem \ref{Thm:aoasjsjsjdhdjshshs} does not ensure\footnote{The convergence of the iterate implies the convergence of the ergodic average of the iterates} the a.s. fulfillment of the ergodic average in the asymptotic regime for the step size sequences of order $\Theta(1/\sqrt{k})$. To close this theoretical gap, we show the convergence of the ergodic average for that class of step sizes:
\begin{theorem}
	\label{Thm:aiaishshgdgdgdffsggss}
Suppose that $v$ is strictly monotone, and
suppose that there exists $\sigma>0$ s.t. $\sigma_{k}\leq \sigma$, for all $k\in\nat$, and that:
\begin{align}
&\sum_{k=0}^{\infty}\gamma_{k}=\infty\quad\tfrac{\sum_{k=0}^{n-1}\gamma^{2}_{k}}{\sum_{k=0}^{n-1}\gamma_{k}}\rightarrow 0\label{Eq:aasosjdhdhdggdshdggddhd}\\
&\tfrac{\sum_{k=0}^{n-1}\gamma_{k}\alpha_{k}}{\sum_{k=0}^{n-1}\gamma_{k}}\rightarrow 0\label{Eq:aiaiahshshsggs}\\
&\sum_{k=0}^{\infty}\tfrac{\gamma_{k}^{2}}{\left( \sum_{i=0}^{k}\gamma_{i}\right)^{2}}<\infty. \label{Eq:aiaiahshshsggs2}
\end{align}	
Then it holds $\lim_{n\rightarrow\infty}\overline{\boldX}_{n}=x_{*}$
\end{theorem}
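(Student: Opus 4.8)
My plan is to convert the one-step estimate of Theorem~\ref{Thm:UpperBoundFench} into an \emph{ergodic} variational-inequality bound and then identify its limit by a Minty-type argument. Fix a feasible test point $\bz=(\bx,0)$ with $\bx\in\mathcal{Q}$, and write $\tau_{n}:=\sum_{k=0}^{n-1}\gamma_{k}$ and $\overline{\boldZ}_{n}:=\tau_{n}^{-1}\sum_{k=0}^{n-1}\gamma_{k}\boldZ_{k}=(\overline{\boldX}_{n},\overline{\boldLambda}_{n})$. Monotonicity of $\tilde{\bv}$ (which follows from that of $\bv$) gives $\upeta_{k}(\bz)=\inn{\boldZ_{k}-\bz}{\tilde{\bv}(\boldZ_{k})}\leq\inn{\boldZ_{k}-\bz}{\tilde{\bv}(\bz)}$, so that $\sum_{k=0}^{n-1}\gamma_{k}\upeta_{k}(\bz)\leq\tau_{n}\inn{\overline{\boldZ}_{n}-\bz}{\tilde{\bv}(\bz)}$. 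Under the standing augmentation \eqref{Eq:AugFunc} with the trackability condition \eqref{Eq:aooajsjsjskkjdjjjddd}, the $\boldlambda=0$ augmentation term is non-positive, since $2\Uppsi_{k}(\boldLambda_{k},0)=(\beta_{k}-\alpha_{k})\norm{\boldLambda_{k}}_{2}^{2}\leq0$, so it may be dropped. Combining this with $\mathcal{V}_{n}(\bz)\geq-\tilde{\rmF}(\bz,\boldZ_{0})$, dividing by $\tau_{n}$, and using $\tilde{\bv}(\bx,0)=(\bv(\bx),\bg(\bx))$, I obtain
\[
\inn{\overline{\boldX}_{n}-\bx}{\bv(\bx)}+\inn{\overline{\boldLambda}_{n}}{\bg(\bx)}\ \geq\ -\epsilon_{n}(\bx),
\]
where $\epsilon_{n}(\bx):=\tau_{n}^{-1}\big(\tilde{\rmF}((\bx,0),\boldZ_{0})+\tilde{C}_{1}\sum_{k=0}^{n-1}\gamma_{k}^{2}+\boldS_{n}(\bx)+\tfrac{2}{K}\boldR_{n}\big)$. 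Because $\overline{\boldLambda}_{n}\geq0$ and $\bg(\bx)\leq0$ for $\bx\in\mathcal{Q}$, the coupling term is non-positive and drops out, leaving the purely primal estimate $\inn{\overline{\boldX}_{n}-\bx}{\bv(\bx)}\geq-\epsilon_{n}(\bx)$.

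The second step is to show $\epsilon_{n}(\bx)\to0$ a.s., uniformly in $\bx\in\mathcal{Q}$. The deterministic part vanishes because $\tau_{n}\to\infty$ and $\tau_{n}^{-1}\sum_{k<n}\gamma_{k}^{2}\to0$ by \eqref{Eq:aasosjdhdhdggdshdggddhd}. The noise part is the genuinely new difficulty: under persistent noise $\sum_{k}\gamma_{k}^{2}\sigma_{k+1}^{2}$ diverges, so Lemma~\ref{Lem:MartNoise} no longer applies and $\boldS_{n},\boldR_{n}$ need not converge. Instead I prove $\boldS_{n}/\tau_{n}\to0$ and $\boldR_{n}/\tau_{n}\to0$ a.s., and this is precisely where \eqref{Eq:aiaiahshshsggs2} is used. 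The auxiliary martingale $\sum_{k}\tau_{k+1}^{-1}\gamma_{k}\inn{\boldX_{k}-\bx}{\bM_{k+1}}$ satisfies $\sum_{k}\Erw\big[\tau_{k+1}^{-2}\gamma_{k}^{2}\inn{\boldX_{k}-\bx}{\bM_{k+1}}^{2}\big]\leq C_{\mathcal{X}}^{2}\sigma^{2}\sum_{k}\gamma_{k}^{2}/\tau_{k+1}^{2}<\infty$, hence is $L^{2}$-bounded and converges a.s.; Kronecker's lemma then yields $\boldS_{n}(\bx)/\tau_{n}\to0$. Writing $\boldS_{n}(\bx)=\boldS_{n}(0)-\inn{\bx}{\sum_{k<n}\gamma_{k}\bM_{k+1}}$ and applying the same argument to the vector martingale $\sum_{k}\tau_{k+1}^{-1}\gamma_{k}\bM_{k+1}$ makes the convergence uniform over the compact set $\mathcal{Q}$. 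For $\boldR_{n}$ I use $\sum_{k}\gamma_{k}^{2}/\tau_{k+1}<\infty$ (eventually $\gamma_{k}^{2}/\tau_{k+1}\leq\gamma_{k}^{2}/\tau_{k+1}^{2}$ as $\tau_{k+1}\to\infty$, then \eqref{Eq:aiaiahshshsggs2}), so $\sum_{k}\gamma_{k}^{2}\norm{\bM_{k+1}}_{*}^{2}/\tau_{k+1}<\infty$ a.s. by Tonelli, and Kronecker's lemma gives $\boldR_{n}/\tau_{n}\to0$ a.s.

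Granting the above, a.s. there is an event on which, for every $\bx$ in a countable dense subset of $\mathcal{Q}$ — hence, by continuity of $\bv$, for every $\bx\in\mathcal{Q}$ — one has $\liminf_{n}\inn{\overline{\boldX}_{n}-\bx}{\bv(\bx)}\geq0$. Since $\overline{\boldX}_{n}\in\mathcal{X}$ and $\mathcal{X}$ is compact, any subsequence of $(\overline{\boldX}_{n})$ has a further subsequence $\overline{\boldX}_{n_{j}}\to\bar{\bx}\in\mathcal{X}$, and in the limit $\inn{\bar{\bx}-\bx}{\bv(\bx)}\geq0$ for all $\bx\in\mathcal{Q}$, the Minty formulation of $\VI(\mathcal{Q},\bv)$. \emph{Provided} $\bar{\bx}\in\mathcal{Q}$, Minty's lemma (continuity and monotonicity of $\bv$) upgrades this to $\bar{\bx}\in\SOL(\mathcal{Q},\bv)$, which by strict monotonicity and Proposition~\ref{Prop:aiaisshshjdhdggddd3} equals $\{\bx_{*}\}$. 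As every subsequential limit coincides with $\bx_{*}$, I conclude $\overline{\boldX}_{n}\to\bx_{*}$.

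The remaining ingredient — and the step I expect to be the main obstacle — is the feasibility $\bar{\bx}\in\mathcal{Q}$, equivalently $\limsup_{n}\bg_{r}(\overline{\boldX}_{n})\leq0$ a.s. for each $r$. By Jensen and the dual recursion (the constraint-violation lemma), $\bg_{r}(\overline{\boldX}_{n})\leq\CVio_{n}^{r}\leq\tau_{n}^{-1}\big(\norm{\boldLambda_{n}}_{2}+\sum_{k<n}\gamma_{k}\alpha_{k}\norm{\boldLambda_{k}}_{2}\big)$, so I must control the a.s. price growth. Summing the pathwise estimate \eqref{Eq:aoaosjsjsdjdjdhfgfhgfhdhdgfgf} (with $\upeta_{n}\leq0$) and using $\tilde{\rmF}(\bz_{*},\boldZ_{n})\geq\tfrac12\norm{\boldLambda_{n}-\boldlambda_{*}}_{2}^{2}$ bounds $\norm{\boldLambda_{n}}_{2}^{2}$ by $\mathcal{O}\big(1+\sum_{k<n}\gamma_{k}\alpha_{k}+\sum_{k<n}\gamma_{k}^{2}+\boldS_{n}+\boldR_{n}\big)$; together with \eqref{Eq:aiaiahshshsggs}, \eqref{Eq:aasosjdhdhdggdshdggddhd} and the $o(\tau_{n})$ bounds just established this yields $\norm{\boldLambda_{n}}_{2}/\tau_{n}\to0$ a.s., disposing of the first term. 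The delicate term is the augmentation feedback $\tau_{n}^{-1}\sum_{k<n}\gamma_{k}\alpha_{k}\norm{\boldLambda_{k}}_{2}$: establishing its a.s. decay requires an Abel-summation/Kronecker treatment that feeds the refined price bound $\norm{\boldLambda_{k}}_{2}=o(\sqrt{\tau_{k}})$ back into the weighted sum, and it is here that the interplay of the dual growth, the augmentation schedule \eqref{Eq:aiaiahshshsggs}, and the step-size decay \eqref{Eq:aasosjdhdhdggdshdggddhd}--\eqref{Eq:aiaiahshshsggs2} must be exploited to the fullest; this is the part of the proof I would carry out most carefully.
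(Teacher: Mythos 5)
Your architecture is genuinely different from the paper's, and the difference is exactly where your proof breaks. The paper tests the ergodic bound once, at the solution $\bz_{*}$ of the \emph{extended} variational inequality on the decoupled set $\X\times\real_{\geq 0}^{R}$: it combines $\liminf_{n}\inn{\overline{\boldZ}_{n}-\bz_{*}}{\tilde{\bv}(\bz_{*})}\geq 0$ (from the ergodic estimate once $\boldS_{n}/\tau_{n},\boldR_{n}/\tau_{n}\to 0$) with $\limsup_{n}\inn{\overline{\boldZ}_{n}-\bz_{*}}{\tilde{\bv}(\bz_{*})}\leq 0$, and the latter is \emph{automatic} because $\overline{\boldZ}_{n}$ always lies in the convex set $\X\times\real_{\geq 0}^{R}$ on which $\bz_{*}$ solves the VI. Your route instead reduces to the primal VI on $\mathcal{Q}$ (test points $(\bx,0)$, $\bx\in\mathcal{Q}$, Minty limit, uniqueness), which re-introduces the coupled constraint: you must prove that every subsequential limit $\bar{\bx}$ of $\overline{\boldX}_{n}$ lies in $\mathcal{Q}$, a step the paper never needs. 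You correctly identify this as the main obstacle, but the bootstrap you sketch does not close it. The pathwise estimate gives $\norm{\boldLambda_{k}}_{2}=o(\sqrt{\tau_{k}})$, and feeding this into the augmentation term together with \eqref{Eq:aiaiahshshsggs} yields only $\sum_{k<n}\gamma_{k}\alpha_{k}\norm{\boldLambda_{k}}_{2}\leq o\bigl(\sqrt{\tau_{n}}\bigr)\sum_{k<n}\gamma_{k}\alpha_{k}=o\bigl(\tau_{n}^{3/2}\bigr)$, i.e.\ after dividing by $\tau_{n}$ a bound of order $o(\sqrt{\tau_{n}})$ rather than $o(1)$. For $\gamma_{k}=\Theta(k^{-1/2})$ a sharper direct computation saves the argument, but under the theorem's actual hypotheses (e.g.\ $\gamma_{k}=1/\ln(k+2)$, $\alpha_{k}=\alpha\gamma_{k}$) it genuinely fails; note also that the paper's feasibility result, Theorem \ref{thm:ErgConv}, is only in expectation and only for $\gamma_{k}=\Theta(1/\sqrt{k})$, so it cannot be cited here. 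As it stands, your proof establishes the theorem only conditionally on this unproven a.s.\ feasibility.

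There is also a concretely wrong step in the noise control. For $\boldR_{n}$ you claim that eventually $\gamma_{k}^{2}/\tau_{k+1}\leq\gamma_{k}^{2}/\tau_{k+1}^{2}$; the inequality is reversed, since once $\tau_{k+1}>1$ one has $1/\tau_{k+1}>1/\tau_{k+1}^{2}$. Consequently \eqref{Eq:aiaiahshshsggs2} does \emph{not} imply $\sum_{k}\gamma_{k}^{2}/\tau_{k+1}<\infty$, and this cannot be repaired within your Tonelli--Kronecker scheme: for $\gamma_{k}=1/\ln(k+2)$ all of \eqref{Eq:aasosjdhdhdggdshdggddhd}--\eqref{Eq:aiaiahshshsggs2} hold, yet $\gamma_{k}^{2}/\tau_{k+1}\sim 1/(k\ln k)$ is not summable. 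The repair is to argue as in the paper's Lemma \ref{Lem:aaisishhdhdgdggdhsgsgsdd2}: obtain an a.s.\ bound $\sup_{n}\norm{\bM_{n}}_{*}^{2}\leq A$ from the persistent-noise assumption and then use $\boldR_{n}/\tau_{n}\leq A\sum_{k<n}\gamma_{k}^{2}/\tau_{n}\to 0$, which needs only \eqref{Eq:aasosjdhdhdggdshdggddhd}. By contrast, the rest of your machinery is sound: the derivation of the primal ergodic bound (monotonicity of $\tilde{\bv}$, $\Uppsi_{k}(\boldLambda_{k},0)\leq 0$ under the trackability condition, dropping $\inn{\overline{\boldLambda}_{n}}{\bg(\bx)}\leq 0$) is correct, and your treatment of $\boldS_{n}(\bx)/\tau_{n}\to 0$ via the normalized martingale, \eqref{Eq:aiaiahshshsggs2}, and Kronecker's lemma is exactly the mechanism behind the paper's Lemma \ref{Lem:aaisishhdhdgdggdhsgsgsdd}, with the countable-dense-set argument a legitimate addition.
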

Before we provide the proof of the above theorem, let us first have a discussion about the step size and the augmentation sequence that fulfill the above requirement:
\begin{remark}
Clearly, a step size sequence of order $\gamma_{k}=\Theta(1/k^{p})$, where $p\in(0,1]$ fulfills \eqref{Eq:aasosjdhdhdggdshdggddhd} and \eqref{Eq:aiaiahshshsggs2}. 
Choosing $\alpha_{k}=\alpha\gamma_{k}$ for an $\alpha>0$, one sees that \eqref{Eq:aiaiahshshsggs} is also fulfilled. 
\end{remark}

\begin{lemma}
	\label{Lem:aaisishhdhdgdggdhsgsgsdd}
	Suppose that:
	\begin{equation*}
	\sum_{k=0}^{\infty}\gamma_{k}=\infty\quad \sum_{k=0}^{\infty}\tfrac{\gamma_{k}^{2}\sigma_{k}^{2}}{\left( \sum_{i=0}^{k}\gamma_{i}\right)^{2}}<\infty
	\end{equation*}
	\begin{equation*}
    \tfrac{\boldS_{n}(\bx)}{\sum_{k=0}^{n-1}\gamma_{k}}\rightarrow 0\quad \text{a.s. }k\rightarrow\infty
	\end{equation*}
\end{lemma}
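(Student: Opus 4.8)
The plan is to read the claim as a strong law of large numbers for the scalar martingale $\boldS_{n}(\bx)$ and to establish it by pairing an $L^{2}$ martingale convergence argument with Kronecker's lemma. Throughout I write $\tau_{n}:=\sum_{k=0}^{n-1}\gamma_{k}$ and $b_{k}:=\sum_{i=0}^{k}\gamma_{i}$, so that $b_{n-1}=\tau_{n}$; since the step sizes are positive, $(b_{k})_{k}$ is strictly increasing, and the hypothesis $\sum_{k}\gamma_{k}=\infty$ forces $b_{k}\uparrow\infty$. I also abbreviate $\tilde{\bM}_{k}:=\tilde{\bM}_{k}(\bx)$.

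First I would introduce the reweighted partial sums $W_{n}:=\sum_{k=0}^{n-1}\tfrac{\gamma_{k}}{b_{k}}\tilde{\bM}_{k}$ and argue that $(W_{n})_{n}$ is a mean-zero $\mathbb{F}$-martingale. This is precisely the structure already verified in the proof of Lemma~\ref{Lem:MartNoise}: each $\boldX_{k}$ is $\mathcal{F}_{k}$-measurable, so the conditionally zero-mean property of $\bM_{k+1}$ yields $\Erw_{k}[\tilde{\bM}_{k}]=0$, see~\eqref{Eq:sskallaa}. As the weights $\gamma_{k}/b_{k}$ are deterministic, each summand is an $\mathbb{F}$-martingale difference, so that $(W_{n})_{n}$ is adapted and satisfies $\Erw_{n}[W_{n+1}]=W_{n}$.

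Second I would show that $(W_{n})_{n}$ is bounded in $L^{2}$. By the orthogonality of the martingale differences established in Lemma~\ref{Lem:MartNoise},
\[
\Erw[W_{n}^{2}]=\sum_{k=0}^{n-1}\tfrac{\gamma_{k}^{2}}{b_{k}^{2}}\,\Erw[\tilde{\bM}_{k}^{2}].
\]
The pairing bound $\abs{\tilde{\bM}_{k}}=\abs{\inn{\boldX_{k}-\bx}{\bM_{k+1}}}\leq\norm{\boldX_{k}-\bx}\,\norm{\bM_{k+1}}_{*}\leq C_{\X}\norm{\bM_{k+1}}_{*}$, where $\norm{\boldX_{k}-\bx}\leq C_{\X}$ by compactness of $\mathcal{X}$ (as in Lemma~\ref{Lem:MartNoise}), gives $\Erw[\tilde{\bM}_{k}^{2}]\leq C_{\X}^{2}\sigma_{k+1}^{2}$ and hence $\Erw[W_{n}^{2}]\leq C_{\X}^{2}\sum_{k=0}^{n-1}\gamma_{k}^{2}\sigma_{k+1}^{2}/b_{k}^{2}$. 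This is bounded uniformly in $n$ by the summability hypothesis, the index shift between $\sigma_{k}$ and $\sigma_{k+1}$ being harmless for the step-size regimes considered here, in which $\gamma_{k-1}/\gamma_{k}\to 1$ and $b_{k-1}/b_{k}\to 1$. By Doob's $L^{2}$ martingale convergence theorem \cite{Hall1980}, $(W_{n})_{n}$ converges a.s.; equivalently, the series $\sum_{k=0}^{\infty}\tfrac{\gamma_{k}}{b_{k}}\tilde{\bM}_{k}$ converges a.s.

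Finally I would invoke Kronecker's lemma: since $b_{k}\uparrow\infty$ and the weighted series $\sum_{k}\tfrac{\gamma_{k}}{b_{k}}\tilde{\bM}_{k}$ converges a.s., it follows that $\tfrac{1}{b_{n-1}}\sum_{k=0}^{n-1}\gamma_{k}\tilde{\bM}_{k}\to 0$ a.s., i.e. $\boldS_{n}(\bx)/\tau_{n}\to 0$ a.s., which is the assertion. None of these steps is deep; the only points requiring care are the bookkeeping ones, namely checking the martingale and orthogonality structure against the definitions recorded in Lemma~\ref{Lem:MartNoise}, and matching the variance series $\sum_{k}\gamma_{k}^{2}\sigma_{k+1}^{2}/b_{k}^{2}$ that arises naturally to the hypothesis stated with $\sigma_{k}^{2}$.
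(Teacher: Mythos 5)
Your proof is correct, and it reaches the conclusion by a genuinely different, more self-contained route than the paper. The paper's own proof is two lines: using Cauchy--Schwarz, compactness of $\X$, and the Fubini/tower argument of Remark \ref{Rem:aaauiusgsghsgshshsddd}, it first notes that a.s. the weighted sum of \emph{conditional} variances $\sum_{k}\gamma_{k}^{2}\Erw_{k}[\abs{\tilde{\bM}_{k}}^{2}]/\left(\sum_{i=0}^{k}\gamma_{i}\right)^{2}$ is finite, and then invokes Theorem 2.18 of \cite{Hall1980} --- a martingale strong law of large numbers stated in terms of such weighted conditional variances --- as a black box, together with the fact (Lemma \ref{Lem:MartNoise}) that $(\boldS_{n}(\bx))_{n}$ is a martingale. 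You instead unpack that strong law in this specific setting: you form the weighted series $W_{n}=\sum_{k=0}^{n-1}(\gamma_{k}/b_{k})\tilde{\bM}_{k}$, verify it is a martingale, bound $\Erw[W_{n}^{2}]$ via the orthogonality of the increments and the deterministic domination $\abs{\tilde{\bM}_{k}}\leq C_{\X}\norm{\bM_{k+1}}_{*}$, deduce a.s. convergence of the series from Doob's $L^{2}$ martingale convergence theorem, and finish with Kronecker's lemma. This is exactly the classical proof scheme behind the theorem the paper cites, so the two arguments agree in substance; yours buys self-containedness and needs only unconditional second moments (the deterministic bound by $C_{\X}\norm{\bM_{k+1}}_{*}$ makes conditional variances unnecessary), while the paper's buys brevity by citation. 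Two bookkeeping remarks. First, the $\sigma_{k}$-versus-$\sigma_{k+1}$ mismatch you flag is an inconsistency in the paper's own statement of the lemma (compare \eqref{Eq:oaaosjsjdhpllalaala}, which carries the index $k+1$); since the lemma is only applied under the persistent-noise assumption $\sigma_{k}\leq\sigma$ of Theorem \ref{Thm:aiaishshgdgdgdffsggss}, where the hypothesis collapses to \eqref{Eq:aiaiahshshsggs2}, the shift is immaterial, and your ratio argument (or simply $\sigma_{k}\leq\sigma$) disposes of it. Second, that ratio argument is the only place where your proof is less general than the statement as written, but nothing is lost in the context in which the lemma is actually used.
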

\begin{proof}
By the Cauchy-Schwarz inequality, by the assumption that $\mathcal{X}$ is compact, and by a similar argumentation as in Remark \eqref{Rem:aaauiusgsghsgshshsddd}, it follows that a.s. $\sum_{k=0}^{\infty}\tfrac{\gamma_{k}^{2}\Erw_{k}[\abs{\tilde{\bM}_{k+1}}^{2}]}{\left( \sum_{i=0}^{k}\gamma_{i}\right)^{2}}<\infty$. 
%\begin{equation*}
%\sum_{k=0}^{\infty}\tfrac{\gamma_{k}^{2}\Erw_{k}[\abs{\tilde{\bM}_{k+1}}^{2}]}{\left( \sum_{i=0}^{k}\gamma_{i}\right)^{2}}<\infty\quad\text{a.s.}
%\end{equation*}  
%\begin{align*}
%&\Erw\left[ \sum_{k=0}^{\infty}\tfrac{\gamma_{k}^{2}\Erw_{k}[\abs{\tilde{\bM}_{k+1}}^{2}]}{\left( \sum_{i=0}^{k}\gamma_{i}\right)^{2}}\right]\leq\Erw\left[ \sum_{k=0}^{\infty}\tfrac{\gamma_{k}^{2}\norm{\X}^{2}\Erw_{k}[\norm{\bM_{k+1}}_{*}^{2}]}{\left( \sum_{i=0}^{k}\gamma_{i}\right)^{2}}\right]\\
%&= \norm{\X}^{2} \sum_{k=0}^{\infty}\tfrac{\gamma_{k}^{2}\Erw[\norm{\bM_{k+1}}_{*}^{2}]}{\left( \sum_{i=0}^{k}\gamma_{i}\right)^{2}}<\infty,
%\end{align*}
%\textcolor{red}{where the equality follows from Fubini's Theorem. Consequently since the summands are non-negative, we have:
%\begin{equation*}
%\sum_{k=0}^{\infty}\tfrac{\gamma_{k}^{2}\Erw_{k}[\abs{\tilde{\bM}_{k+1}}^{2}]}{\left( \sum_{i=0}^{k}\gamma_{i}\right)^{2}}<\infty\quad\text{a.s.}
%\end{equation*}}
Finally, since $(\boldS_{n}(\bx))_{n}$ is a martingale (see Lemma \ref{Lem:MartNoise}), we have from Theorem 2.18. in \cite{Hall1980} the desired statement.
\end{proof}
\begin{lemma}
	\label{Lem:aaisishhdhdgdggdhsgsgsdd2}
Suppose that $\sigma_{k}\leq \sigma$, for all $k$, and that:
\begin{equation}
\label{Eq:aaiisjshdjjdhhdggshhsss}
\tfrac{\sum_{k=0}^{n-1}\gamma_{k}^{2}}{\sum_{k=0}^{n-1}\gamma_{k}}\rightarrow 0,\quad n\rightarrow\infty.
\end{equation}	
Then it holds:
\begin{equation*}
\tfrac{\boldR_{n}}{\sum_{k=0}^{n-1}\gamma_{k}}\rightarrow 0,\quad n\rightarrow\infty,~\text{a.s.}
\end{equation*}
\end{lemma}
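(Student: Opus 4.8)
The plan is to apply Kronecker's lemma to the nondecreasing process $\boldR_n=\sum_{k=0}^{n-1}\gamma_k^2\norm{\bM_{k+1}}_*^2$, after first recording a reduction. Write $B_n:=\sum_{k=0}^{n-1}\gamma_k$ and $c_n:=\sum_{k=0}^{n-1}\gamma_k^2$; both are nondecreasing sums of strictly positive terms. I first observe that the hypothesis \eqref{Eq:aaiisjshdjjdhhdggshhsss}, i.e. $c_n/B_n\to0$, forces $B_n\to\infty$: were $B_n$ to stay bounded it would converge to a finite limit, and then $c_n/B_n$ would tend to the strictly positive constant $(\sum_k\gamma_k^2)/(\lim_n B_n)$, contradicting \eqref{Eq:aaiisjshdjjdhhdggshhsss}. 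Note also that we are genuinely outside the scope of Lemma \ref{Lem:MartNoise}: in the relevant regime (e.g. $\gamma_k=\Theta(1/\sqrt{k})$) one has $\sum_k\gamma_k^2=\infty$, so $\boldR_n$ itself diverges and must be rescaled by $B_n$.

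By Kronecker's lemma applied to the positive increasing sequence $B_{n+1}\uparrow\infty$, it suffices to prove that the nonnegative series $\sum_{k=0}^\infty \frac{\gamma_k^2\norm{\bM_{k+1}}_*^2}{B_{k+1}}$ converges almost surely; its Kronecker average is exactly $\boldR_{n+1}/B_{n+1}$, so a.s. convergence of the series yields $\boldR_n/B_n\to0$ a.s. Since the summands are nonnegative, Tonelli's theorem gives $\Erw\bigl[\sum_{k}\frac{\gamma_k^2\norm{\bM_{k+1}}_*^2}{B_{k+1}}\bigr]=\sum_k\frac{\gamma_k^2\sigma_{k+1}^2}{B_{k+1}}\le\sigma^2\sum_k\frac{\gamma_k^2}{B_{k+1}}$, using the persistent-noise bound $\sigma_{k+1}\le\sigma$. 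Hence, once the deterministic series $\sum_k\gamma_k^2/B_{k+1}$ is shown to be finite, the random series has finite expectation, is therefore a.s. finite, and Kronecker's lemma closes the argument.

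The summability of $\sum_k\gamma_k^2/B_{k+1}$ is the crux. Summation by parts, using $\gamma_k^2=c_{k+1}-c_k$ and $\tfrac{1}{B_k}-\tfrac{1}{B_{k+1}}=\tfrac{\gamma_k}{B_kB_{k+1}}$, gives $\sum_{k=0}^{N}\frac{\gamma_k^2}{B_{k+1}}=\frac{c_{N+1}}{B_{N+1}}+\sum_{k=1}^{N}\frac{\gamma_k\,c_k}{B_kB_{k+1}}$, whose boundary term vanishes by \eqref{Eq:aaiisjshdjjdhhdggshhsss}; for the step-size schedules of interest, $\gamma_k=\Theta(k^{-p})$ with $p\in(0,1]$, one checks directly that $\gamma_k^2/B_{k+1}=\Theta(k^{-(1+p)})$ (and $=\Theta(k^{-3/2})$ when $p=1/2$), so the series converges and the argument goes through.

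The main obstacle is precisely this passage from the immediate mean estimate $\Erw[\boldR_n/B_n]\le\sigma^2\,c_n/B_n\to0$ to \emph{almost-sure} convergence: since only a uniform second-moment bound $\sigma_k\le\sigma$ is available --- no higher moments, and no conditional variance control --- the Kronecker machinery must be driven entirely by the first-moment (Tonelli) bound on the nonnegative series $\sum_k\gamma_k^2\norm{\bM_{k+1}}_*^2/B_{k+1}$. This is why verifying $\sum_k\gamma_k^2/B_{k+1}<\infty$, rather than the weaker $\sum_k\gamma_k^2/B_{k+1}^2<\infty$ used for the martingale term $\boldS_n$ in Lemma \ref{Lem:aaisishhdhdgdggdhsgsgsdd}, is the essential point; it holds comfortably for the polynomially decaying schedules under consideration and is exactly what renders the rescaled noise average negligible in the limit.
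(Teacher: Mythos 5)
Your reduction---Kronecker's lemma plus the Tonelli/first-moment bound---is sound as far as it goes, but the step on which everything hinges, $\sum_{k}\gamma_k^2/B_{k+1}<\infty$ (writing $B_n:=\sum_{k=0}^{n-1}\gamma_k$, $c_n:=\sum_{k=0}^{n-1}\gamma_k^2$), is \emph{not} implied by the lemma's hypothesis \eqref{Eq:aaiisjshdjjdhhdggshhsss}, and this is a genuine gap. Your summation-by-parts identity does not close it: it leaves the sum $\sum_{k}\gamma_k c_k/(B_kB_{k+1})$, and the hypothesis $c_k/B_k\to 0$ only makes these terms $o\left(\gamma_k/B_{k+1}\right)$, while $\sum_k \gamma_k/B_{k+1}=\sum_k (B_{k+1}-B_k)/B_{k+1}$ \emph{always} diverges once $B_n\to\infty$. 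Concretely, $\gamma_k=1/\log(k+2)$ gives $B_n\sim n/\log n$ and $c_n\sim n/\log^2 n$, so \eqref{Eq:aaiisjshdjjdhhdggshhsss} holds, yet $\gamma_k^2/B_{k+1}=\Theta\left(1/(k\log k)\right)$ is not summable. Retreating to ``the schedules of interest'' $\gamma_k=\Theta(k^{-p})$ means you have proved the lemma only under a strictly stronger hypothesis than the one stated.

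The paper's proof is different in kind and avoids this summability requirement altogether: it claims that $(\norm{\bM_n}_{*}^{2})_n$ is a submartingale with uniformly bounded expectations, invokes the martingale convergence theorem to get a.s. convergence, hence an a.s. finite random constant $A$ with $\sup_n\norm{\bM_n}_{*}^{2}\leq A$, and then bounds pathwise $\boldR_n\leq A\,c_n$, so that \eqref{Eq:aaiisjshdjjdhhdggshhsss} finishes immediately. Note, moreover, that your first-moment route cannot be repaired under the stated hypotheses: with only $\sigma_k\leq\sigma$ available, take independent spike noise $\bM_{k+1}=\pm\sqrt{a_k}$ with probability $p_k/2$ each (and $0$ otherwise), where $a_k=B_{k+1}/\gamma_k^2$ and $p_k=\sigma^2\gamma_k^2/B_{k+1}$; this is a square-integrable martingale difference sequence with $\sigma_{k+1}^2=\sigma^2$, every spike forces $\boldR_{k+1}/B_{k+1}\geq 1$, and when $\sum_k\gamma_k^2/B_{k+1}=\infty$ (as for the logarithmic step size above) the second Borel--Cantelli lemma produces infinitely many spikes a.s., so the conclusion of the lemma itself fails. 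Thus the condition you isolated is genuinely necessary for any argument driven by second moments alone and must be added as an assumption (or the step-size class restricted); the lemma in its stated generality leans entirely on the paper's pathwise-boundedness claim---which is itself delicate, since the squared norms of a martingale \emph{difference} sequence need not form a submartingale, as your spike example also shows.
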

\begin{proof}
$(\norm{\bM_{n}}_{*}^{2})_{n}$ is a submartingale with $\sup_{k\in\nat}\Erw[\norm{\bM_{n}}_{*}^{2}]<\infty$. So, it follows from the Martingale Convergence Theorem (Thm. 2.5 in \cite{Hall1980}) the a.s. convergence of $(\norm{\bM_{n}}_{*}^{2})_{n}$ and thus a.s. the existence of $A>0$ s.t. $\sup_{n\in\nat}\norm{\bM_{n}}_{*}^{2}\leq A$. Combining this with \eqref{Eq:aaiisjshdjjdhhdggshhsss}, the result follows.
%Consequently we have a.s.:
%\begin{equation*}
%\tfrac{\sum_{k=0}^{n-1}\gamma_{k}^{2}\norm{\bM_{k}}_{*}^{2}}{\sum_{k=0}^{n-1}\gamma_{k}}\leq A\tfrac{\sum_{k=0}^{n-1}\gamma_{k}^{2}}{\sum_{k=0}^{n-1}\gamma_{k}} \rightarrow 0.
%\end{equation*}
\end{proof}
\begin{figure}[htbp]
	\begin{center}
		\includegraphics[scale=0.7, trim={4cm 11.5cm 4cm 11.9cm}, clip]{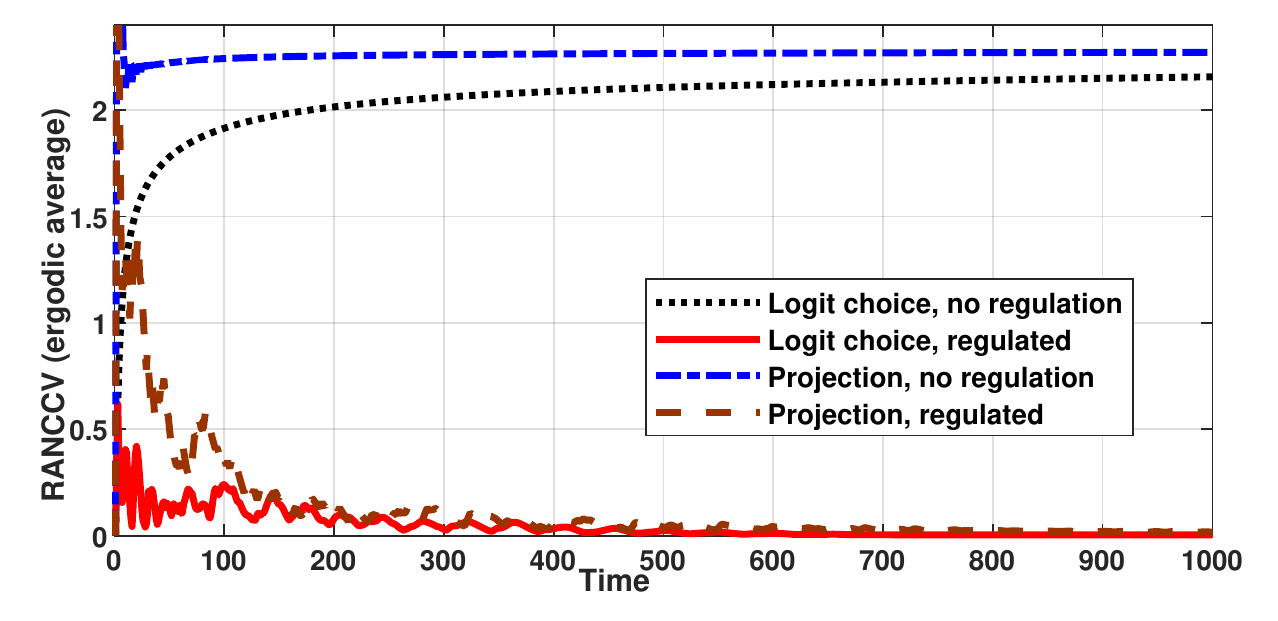}
	\end{center}
	\caption{RANCCV of the ergodic average of the population's dynamic for $D=R=20$, $N=50$, $d=10.5$, and $\sigma=0$.
		%, with either logit choice or (Euclidean) projection as mirror map and with/without the price regularization specified by MAARP.
	}
	\label{Fig:aoaojsjsjjddd1}
\end{figure}
\begin{proof}[Proof of Theorem \ref{Thm:aiaishshgdgdgdffsggss}]It holds:
\begin{equation*}	
\inn{\overline{\boldZ}_{n}-\bz_{*}}{\tilde{\bv}(\bz_{*})}=\tfrac{\sum_{k=0}^{n-1}\gamma_{k}\inn{\boldZ_{k}-\bz_{*}}{\tilde{\bv}(\bz_{*})}}{\sum_{k=0}^{n-1}\gamma_{k}}\geq\tfrac{\sum_{k=0}^{n-1}\gamma_{k}\upeta_{k}}{\sum_{k=0}^{n-1}\gamma_{k}},
\end{equation*}
where the inequality follows from the monotonicity of $\tilde{\bv}$. Thus, it follows from \eqref{Eq:aiaishhfggfhfhffffak} and the fact that $\mathcal{V}_{n}\geq \tilde{\rmF}(\boldZ_{0},z_{*})$:
\begin{align*}
\inn{\overline{\boldZ}_{n}-\bz_{*}}{\tilde{\bv}(\bz_{*})}&\geq-\tfrac{\tilde{\rmF}(\boldZ_{0},\bz_{*})}{\tau_{n}}-\tfrac{\tilde{C}_{1}\sum_{k=0}^{n-1}\gamma_{k}^{2}}{\tau_{n}}\\
&+\tfrac{\norm{\boldlambda_{*}}_{2}^{2}\sum_{k=n_{0}}^{n-1}\gamma_{k}\alpha_{k}}{2\tau_{n}}
+\tfrac{\boldS_{n}+\tfrac{2}{K}\boldR_{n}}{\tau_{n}}.
\end{align*}
Now, Lemma \ref{Lem:aaisishhdhdgdggdhsgsgsdd} and \ref{Lem:aaisishhdhdgdggdhsgsgsdd2} give $(\boldS_{n}+\tfrac{2}{K}\boldR_{n})/\tau_{n}\rightarrow 0$ a.s and consequently $\liminf_{n\rightarrow\infty}\inn{\overline{\boldZ}_{n}-\bz_{*}}{\tilde{\bv}(\bz_{*})}\geq 0$. Combining this statement with the fact that a.s. $\limsup_{n\rightarrow\infty}\inn{\overline{\boldZ}_{n}-\bz_{*}}{\tilde{\bv}(\bz_{*})}\leq 0$, which is implied by the fact that $\bz_{*}\in\SOL(\mathcal{X}\times\real^{R}_{\geq 0},\tilde{\bv})$, it follows that $\lim_{n\rightarrow\infty}\inn{\overline{\boldZ}_{n}-\bz_{*}}{\tilde{\bv}(\bz_{*})}= 0$.
%\begin{equation*}
%\lim_{n\rightarrow\infty}\inn{\overline{\boldZ}_{n}-\bz_{*}}{\tilde{\bv}(\bz_{*})}= 0
%\end{equation*} 
Finally, since $\tilde{\bv}$ is strictly monotone, it follows that a.s. $\lim_{n\rightarrow\infty}\overline{\boldZ}_{n}=z_{*}$ as desired.
 \end{proof}
\section{Numerical Experiment}
\label{Sec:NumExp}
In this section, we numerically investigate the behavior of Algorithm \ref{Alg:aoaishhjddhhddddeee} in case the underlying game model is quadratic. This sort of games is popular in several applications (see e.g. \cite{Mertikopoulos2018,Paccagnan2017,Grammatico2017}) such as competitive markets, cognitive radio networks, charging of electric vehicles, and congestion control of road networks. Our focus here lies on the difference of the sustainable resource behavior, between the population's states and the their ergodic averages, and between different choices of the mirror map.  

\subsection{Setting}
%\paragraph{Exponential Weights Online Learning in Quadratic Game}
\paragraph*{Exponential Weights Online Learning in the Quadratic Game}
\begin{figure}[htbp]
	\begin{center}
		\includegraphics[scale=0.7, trim={4cm 11.5cm 4cm 11.9cm}, clip]{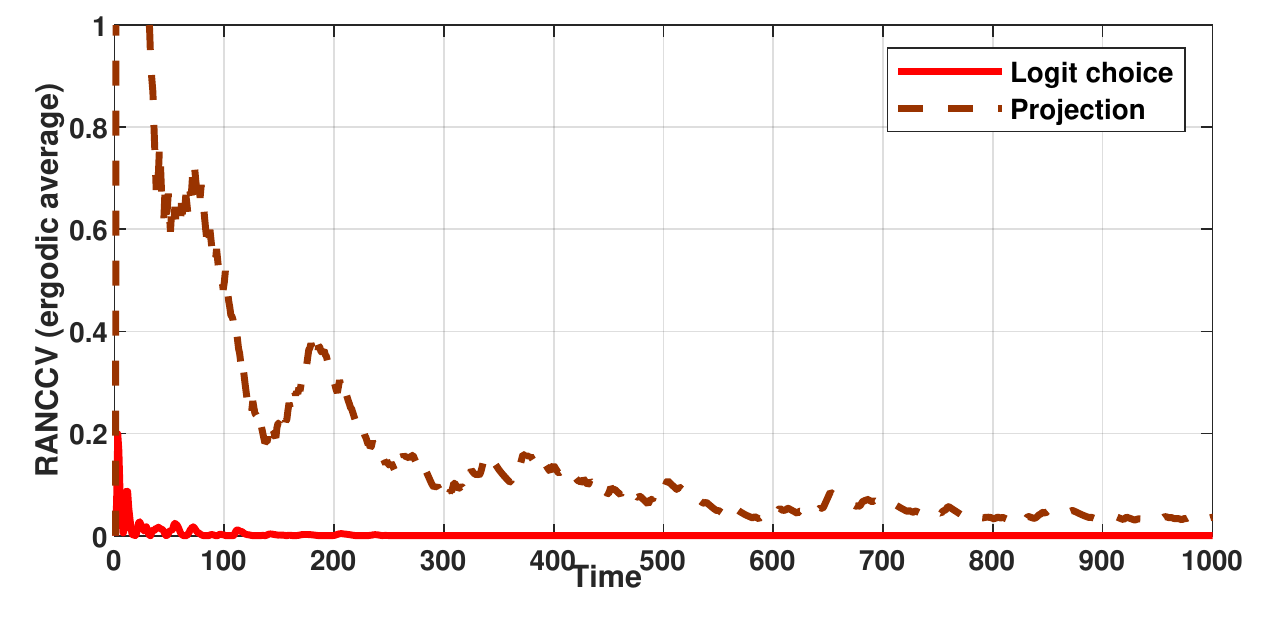}
	\end{center}
	\caption{RANCCV of the ergodic average of the population's dynamic for $D=R=50$, $N=100$, $d=8.5$, and $\sigma=0$.}
	\label{Fig:aoaojsjsjjddd2}
\end{figure}
We consider $N$ agents whose task is to allocate a certain amount of tasks to $R$ resources. The strategy space of agent $i$ corresponds to the simplex $\Delta:=\lrbrace{\bx^{(i)}\in\real^{R}_{\geq 0}:\sum_{r=1}^{R}\bx_{k}^{(i)}=1}$.
%\begin{equation*}
%\Delta:=\lrbrace{\bx^{(i)}\in\real^{R}_{\geq 0}:\sum_{r=1}^{R}\bx_{k}^{(i)}=1}.
%\end{equation*}
For a strategy, $\bx^{(i)}\in\Delta$, $x_{r}^{(i)}$ stands for the proportion of tasks agent $i$ assigns to resource $r\in [R]$. Moreover, we have, in this case, $D=R$. The cost function of player $i$ is quadratic and given by $$J^{(i)}(\bx^{(i)},\bx^{(-i)})=\tfrac{1}{2}\inn{\bx^{(i)}}{\bfQ \bx^{(i)}}+\inn{\bfC\sigma(\bx)+c^{i}}{\bx^{(i)}},$$ where $\sigma(\bx)=\tfrac{1}{N}\sum_{i=1}^{N}\bx^{(i)}$,
%\begin{equation*}
%J^{(i)}(\bx^{(i)},\bx^{(-i)})=\tfrac{1}{2}\inn{\bx^{(i)}}{\bfQ \bx^{(i)}}+\inn{C\sigma(\bx)+c^{i}}{\bx^{(i)}}, \quad \sigma(\bx)=\tfrac{1}{N}\sum_{i=1}^{N}\bx^{(i)}.
%\end{equation*}
$c_{i}\in\real^{D}$, $\bfQ\in\real^{D\times D}$ and $\bfC\in\real^{D\times D}$ are positive semi-definite, and either $\bfQ$ or $\bfC$ are positive definite. In order to apply our method, we set $\ru_{i}(\bx)=-J^{(i)}(\bx)$. The corresponding gradient mapping is given by: $$\bv(\bx)=-\left[ (\mathbf{I}_{N}\otimes \bfQ+ \tfrac{1}{N}\mathbf{1}_{N}\mathbf{1}_{N}^{\T}\otimes C)\bx+c+\tfrac{1}{N}(\mathbf{I}_{N}\otimes C^{\T})\bx\right],$$
%\begin{equation*}
%v(\bx)=-\left[ (I_{N}\otimes \bfQ+ \tfrac{1}{N}\mathbf{1}_{N}\mathbf{1}_{N}^{\T}\otimes C)\bx+c+\tfrac{1}{N}(I_{N}\otimes C^{\T})\bx\right] ,
%\end{equation*} 
where $\otimes$ denotes the Kronecker product between two matrices, and where $\mathbf{1}_{N}$ denotes a vector in $\real^{N}$ whose entries are one and $\mathbf{I}_{N}$ denotes the identity matrix in $\real^{N\times N}$.
%\paragraph{Game Parameter}

\paragraph*{Game Parameter}
\begin{figure}[htbp]
	\begin{center}
			\includegraphics[scale=0.7, trim={4cm 11.8cm 4.4cm 12cm}, clip]{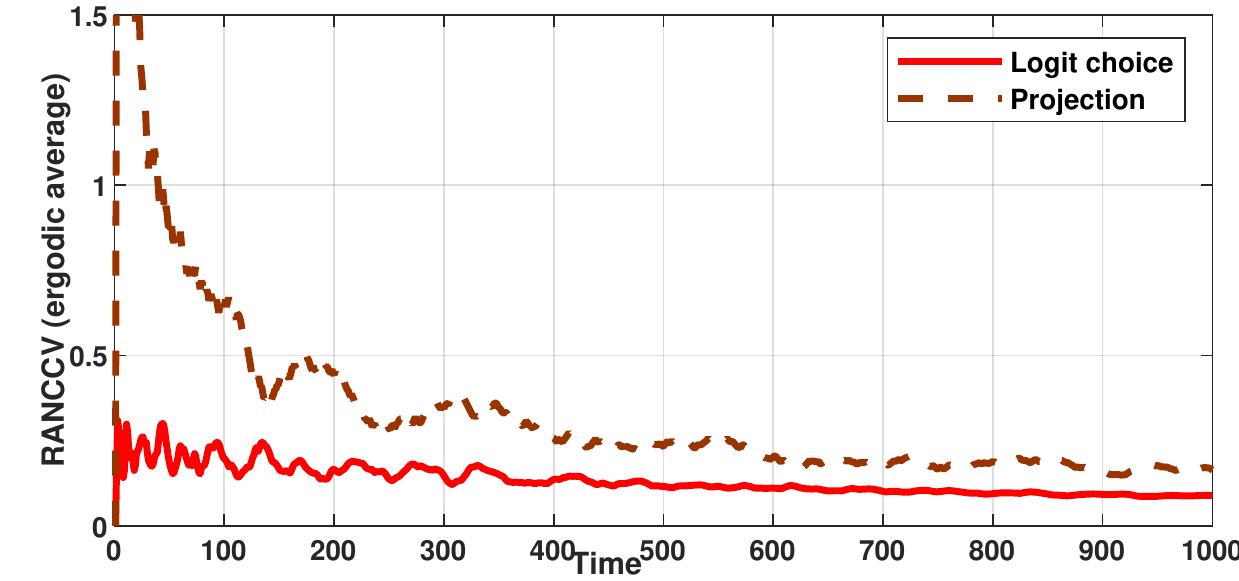}
	
	\end{center}
	\caption{RANCCV of the ergodic average of the population's dynamic for $D=R=50$, $N=100$, $d=8$, and $\sigma=0$}
	\label{Fig:aoaojsjsjjddd3}
\end{figure}
As the mirror map of the agents, we use either the Euclidean projection onto the simplex or the logit choice. In order to avoid numerical overflow in the implementation of the logit choice we use the log-sum trick. In the simulation, we set $\bfQ=2\sqrt{\tilde{\bfQ}^{\T}\tilde{\bfQ}}+\mathbf{I}_{D}$, where the entries of $\tilde{\bfQ}$ are chosen independently and normal distributed. Moreover we consider the case where $\bfC=4\mathbf{I}_{D}$ and $c=0$. We adapt as the model for resource constraints, the affine constraints as described in Remark \ref{Rem:aaiaishshshdjjdhdhdggsss}, with $\bfA=4\mathbf{I}_{D}$ and $b=d\mathbf{1}_{D}$ for different $d>0$. As the model for the stochastic feedback, we use the Gaussian vector with the covariance matrix $\sigma^{2}\mathbf{I}_{D}$, where either $\sigma=0$ or $\sigma=5$. Throughout the simulations, we choose the step size sequence $\gamma_{n}=0.5/\sqrt{n+1}$ and the augmentation sequence $\alpha_{n}=\alpha\gamma_{n}$ with $\alpha=5$. We plot the corresponding resource average negative clipped constraints violation (RANCCV), both of the ergodic average of the population's dynamic, i.e., $\sum_{r=1}^{R}[\bg_{r}(\overline{\boldX}_{n})]_{+}/R$, and of the population's dynamic, i.e., $\sum_{r=1}^{R}[\bg_{r}(\boldX_{n})]_{+}/R$.
\begin{figure}[htbp]
	\begin{center}
		\includegraphics[scale=0.7, trim={4cm 11.5cm 4cm 11.9cm}, clip]{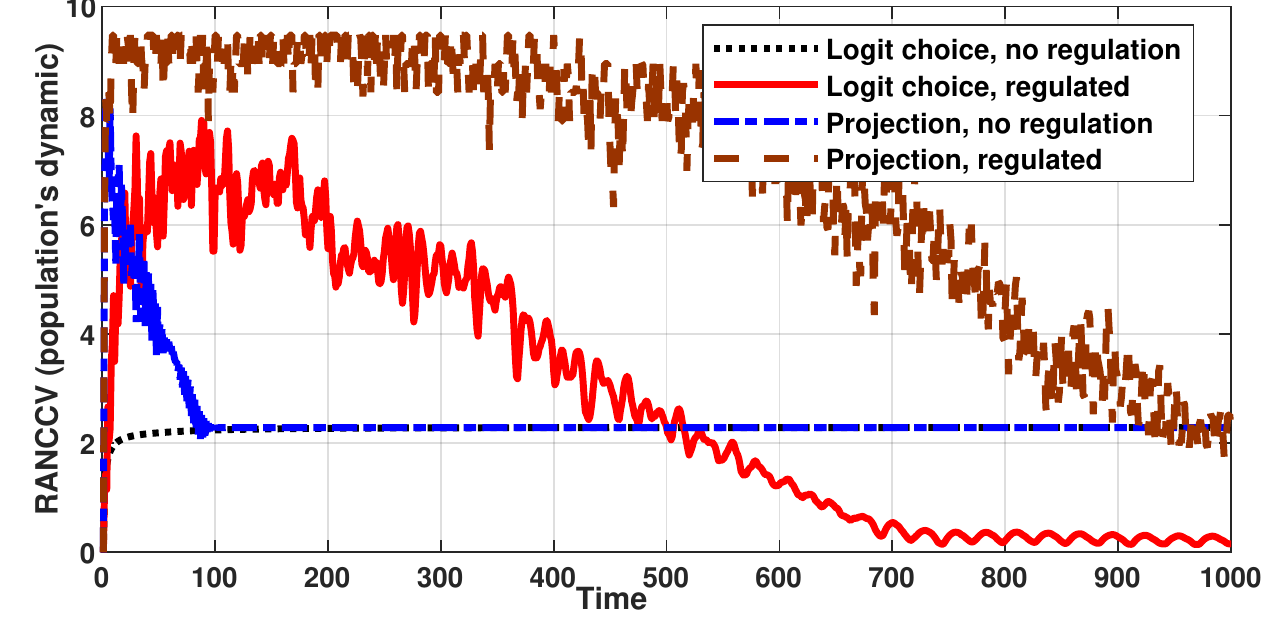}
	\end{center}
	\caption{RANCCV of the population's dynamic for $D=R=20$, $N=50$, $d=10.5$, and $\sigma=0$.}
	\label{Fig:aoaojsjsjjddd4}
\end{figure}
\begin{figure}[htbp]
	\centering
	%\scalebox{.7}{\input{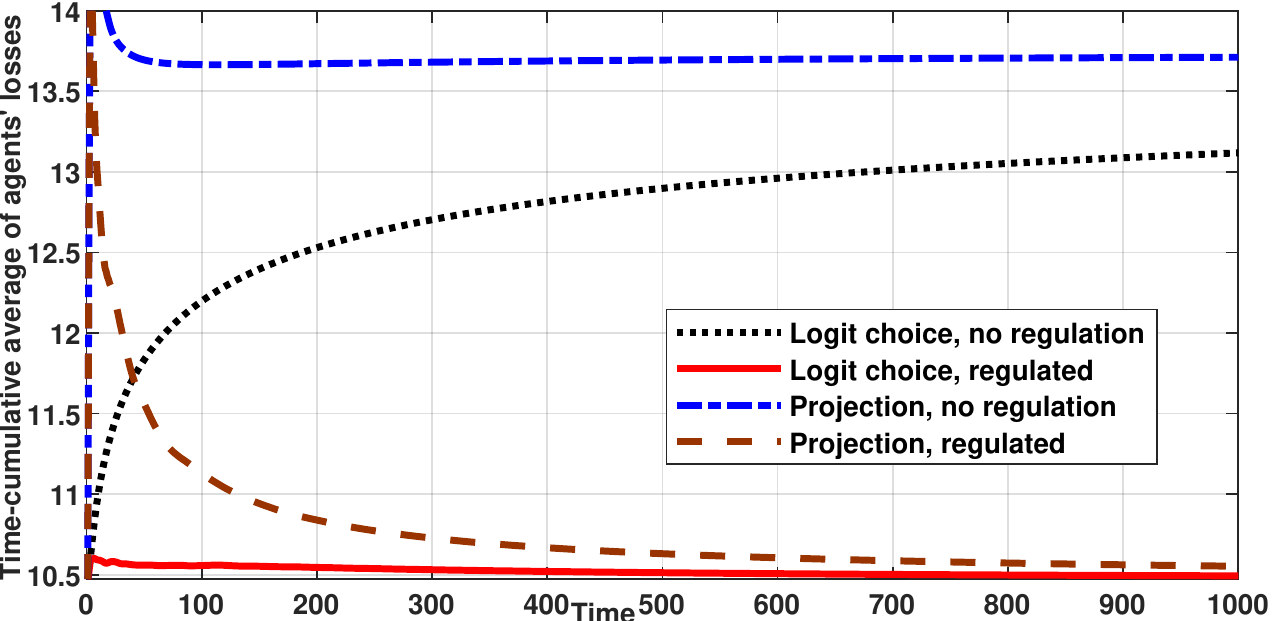}}
	\includegraphics[scale=0.7, trim={4cm 11.5cm 4cm 11.9cm}, clip]{TimeCum.pdf}
	\caption{Comparison of Time-cumulative of average agents' losses for mirror ascent dynamic with (MAARP) and without pricing regulation.}
	\label{Fig:TimeCum}	
\end{figure}
\subsection{Evaluation}
\paragraph*{Price regulation vs. Anarchy - RANCCV}We first evaluate the results in case of no feedback noise, i.e., $\sigma=0$ (Figure \ref{Fig:aoaojsjsjjddd1}--\ref{Fig:aoaojsjsjjddd4}), in the case of no feedback noise, i.e., $\sigma=0$.
As apparent from Figure \ref{Fig:aoaojsjsjjddd1}, control of selfish agents is crucial for sustainable resource behaviour, since the average of the negative clipped resource constraints violation in the pure anarchistic case (black dotted - and blue dash-dotted line) \eqref{Eq:aoaaosjsdhdjdhhddss} is significantly higher than the case where MAARP is applied (red - and brown dashed line). We observe that better results yield if we use the logit choice (red line) instead of the usual Euclidean projection (brown dashed line). This effect is more pronounced in a high strategy space dimension than in the low one (cf. Figure \ref{Fig:aoaojsjsjjddd1} and Figures \ref{Fig:aoaojsjsjjddd2}--\ref{Fig:aoaojsjsjjddd3}), which is aligned with the discussion made in Remark \ref{Rem:aaiisgsgsgsffsdsdss}. 
\paragraph*{Price regulation vs. Anarchy - Utilities}
Given the previous discussion, one may think that the reduction of constraint violation comes with a reduction of the population's welfare. For this reason, we investigate the average of agents' losses, i.e. $\sum_{i=1}^{N}J^{(i)}(\overline{\boldX}_{n})/N$, numerically . We plot the time-average\footnote{The reason that we plot the time-average instead of the quantity itself is to make the performance distinction between MAARP with logit choice and with Euclidean projection clearer.} of this quantity in Figure \ref{Fig:TimeCum}. There, we observe that MAARP, in contrast to the pure anarchistic case, not only promotes sustainable behaviour, but also reduces the average loss, and therefore improves the population welfare. Furthermore, we see that the use of a mirror map other than Euclidean projection also improves not only sustainable behavior but also the population's welfare.

\paragraph*{Ergodic Average vs. Actual Trajectory} 
\begin{figure}[htbp]
	\begin{center}
		\includegraphics[scale=0.7, trim={4cm 11.8cm 4.4cm 12cm}, clip]{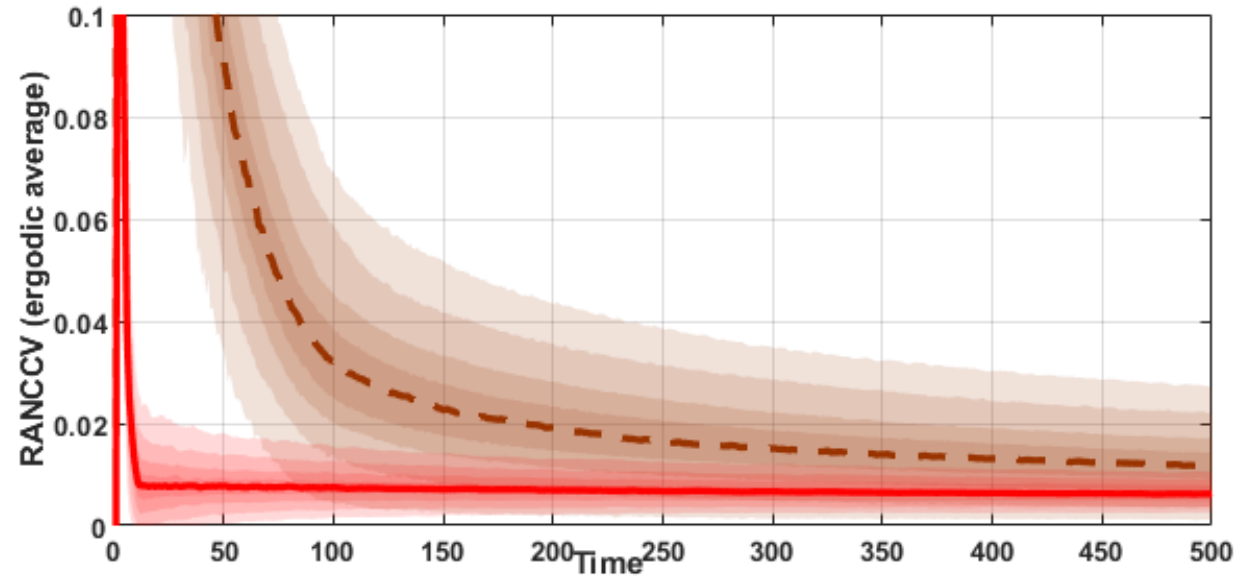}
	\end{center}
	\caption{RANCCV of the ergodic average of the population's dynamic for $D=R=20$, $N=50$, $d=10.5$, $\sigma=5$, and sample size $=500$. Red line corresponds to the sample average of RANCCV in the logit choice case and brown dashed line resp. in the Euclidean case. Shaded areas correspond to $25\%$-, $50\%$-, $75\%$-, and $90\%$-percentile.}
	\label{Fig:aoaojsjsjjddd5}
\end{figure}
\begin{figure}[htbp]
	\begin{center}
			\includegraphics[scale=0.7, trim={4cm 11.7cm 4.4cm 12cm}, clip]{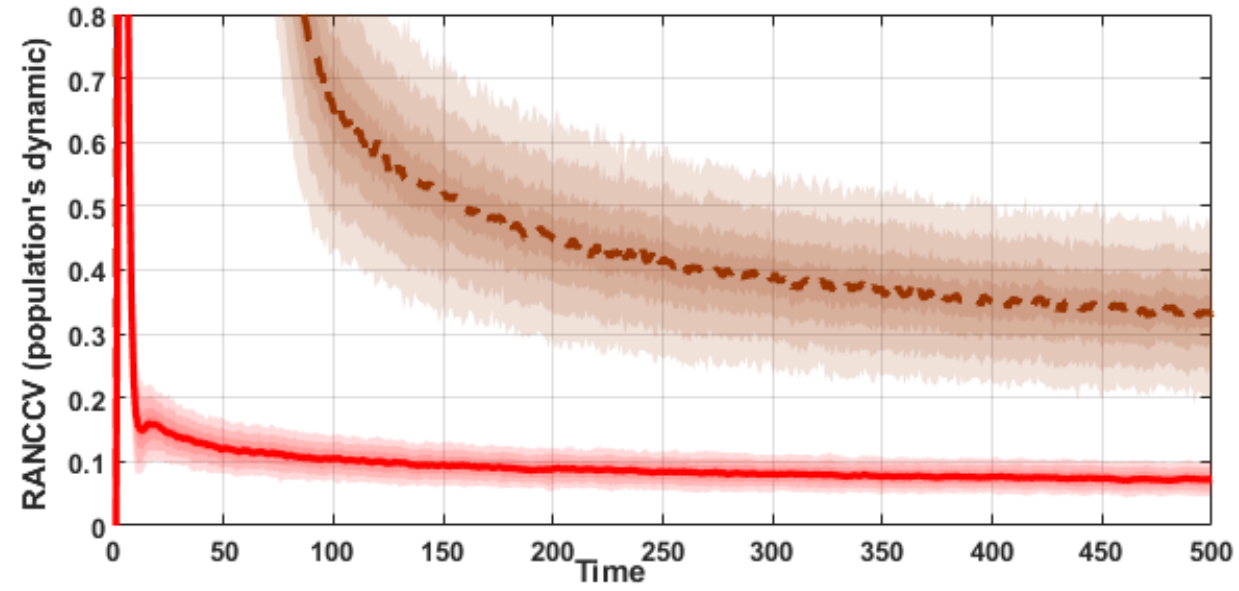}
	\end{center}
	\caption{RANCCV of the population's dynamic for $D=R=50$, $N=100$, $d=10.5$, $\sigma=5$, and sample size $=500$. Red line corresponds to the sample average of RANCCV in the logit choice case and brown dashed line resp. in the Euclidean case. Shaded areas correspond to $25\%$-, $50\%$-, $75\%$-, and $90\%$-percentile.}
	\label{Fig:aoaojsjsjjddd6}
\end{figure}
Figure \ref{Fig:aoaojsjsjjddd2} depicts the behavior of the actual population's dynamic. Therein we can observe the tendency of the decaying number of violations of the resource constraints. However, the corresponding decay might be much slower in comparison to decay of the ergodically time-averaged population's dynamic (see Figure \ref{Fig:aoaojsjsjjddd1}).

\paragraph*{Noise-Robustness of MAARP with Logit choice}Now, we evaluate the simulation results in case of feedback noise with $\sigma=5$ (Figures \ref{Fig:aoaojsjsjjddd5} and \ref{Fig:aoaojsjsjjddd6}) and $500$ noisy samples (i.i.d). One can see that using logit choice effects, in average, better results than using the Euclidean projection as forecasted by Remark \ref{Rem:aaiisgsgsgsffsdsdss}. Moreover, one can observe that the RANCCV is more volatile in the case where the Euclidean projection is used, compared to the case where the mirror map is the logit choice. Comparing Figures \ref{Fig:aoaojsjsjjddd5} and \ref{Fig:aoaojsjsjjddd6}, it is apparent that using the ergodic average yields a significantly lower and less volatile number of resource constraint violations.        

%\paragraph*{Utility analysis}
%In order to see whether our method might be overly conservative in the sense that it reduces the resource violation of cost of decreasing population welfare, we investigate the average of agents' utilities numerically. In order to see the tendency better, we average the results in addition w.r.t. the time. We plot our result in Figure 	\ref{Fig:TimeCum}. There we	
\paragraph*{Comparison to the State of the Art}
At last, we give a comparison of our method to some existing methods comparable to ours, i.e., the primal-dual method (see e.g., \cite{Facchinei1}) and the asymmetric projection (AP) (see e.g., Algorithm 2 \cite{Paccagnan2017}). The primal-dual method can be seen as the MAARP without augmentation (i.e., $\alpha=0$). Moreover, we give the primal-dual method a leverage by equipping it with the logit choice instead of the Euclidean projection. We plot the corresponding RANCCV in Figure \ref{Fig:RANCCVCComp}. There, one can see that the MARRP with logit choice (red line) outperforms the primal-dual method (yellow dash-dotted), and the best result yields if one uses AP (purple dotted). However, the dual variable update of AP requires, in contrast to MAARP, two consecutive congestion states of the resources and can not be implemented in parallel with the population's strategy update (see Remark \eqref{Rem:aaiishssggdhddgdggdgdd}). Besides this fact, one can see from the plot of the average of agents' losses in Figure \ref{Fig:AVUtil}, that the excellent performance of RANCCV of AP comes with the increase of agents' losses.
\begin{figure}[htbp]
	\centering
	%\scalebox{.6}{\input{RANCCVCComp}}
	\includegraphics[scale=0.7, trim={4cm 11.9cm 4.4cm 12cm}, clip]{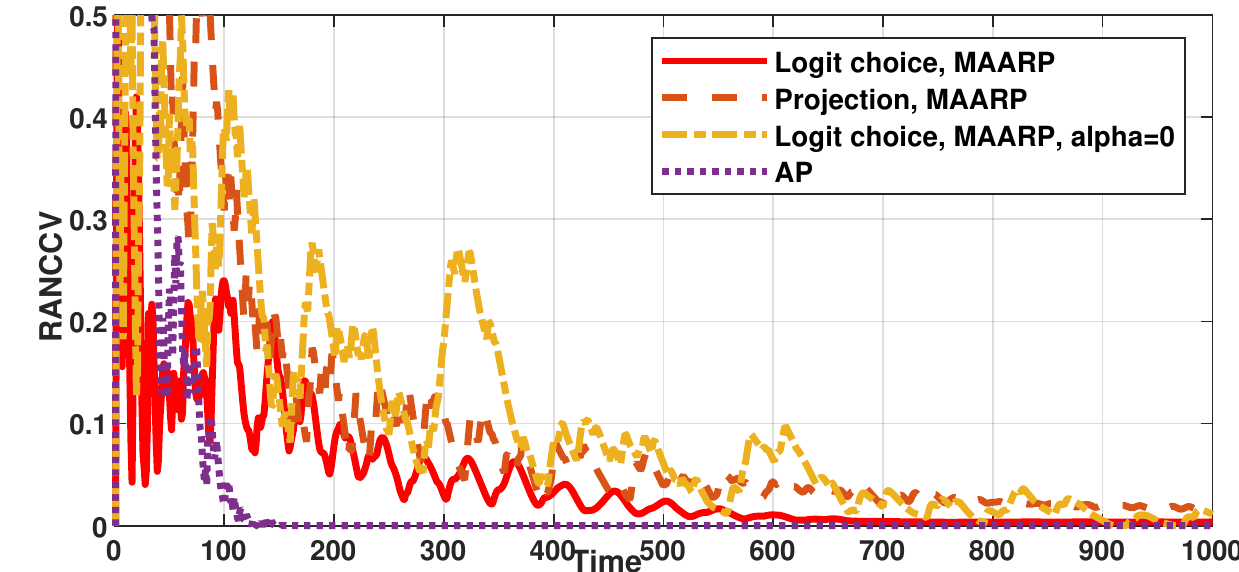}
	\caption{Comparison of RANCCV of MAARP with the state of the art.}
	\label{Fig:RANCCVCComp}	
\end{figure}

\begin{figure}[htbp]
	\centering
	%\scalebox{.6}{\input{AVUtil}}
		\includegraphics[scale=0.7, trim={4cm 11.9cm 4.4cm 12cm}, clip]{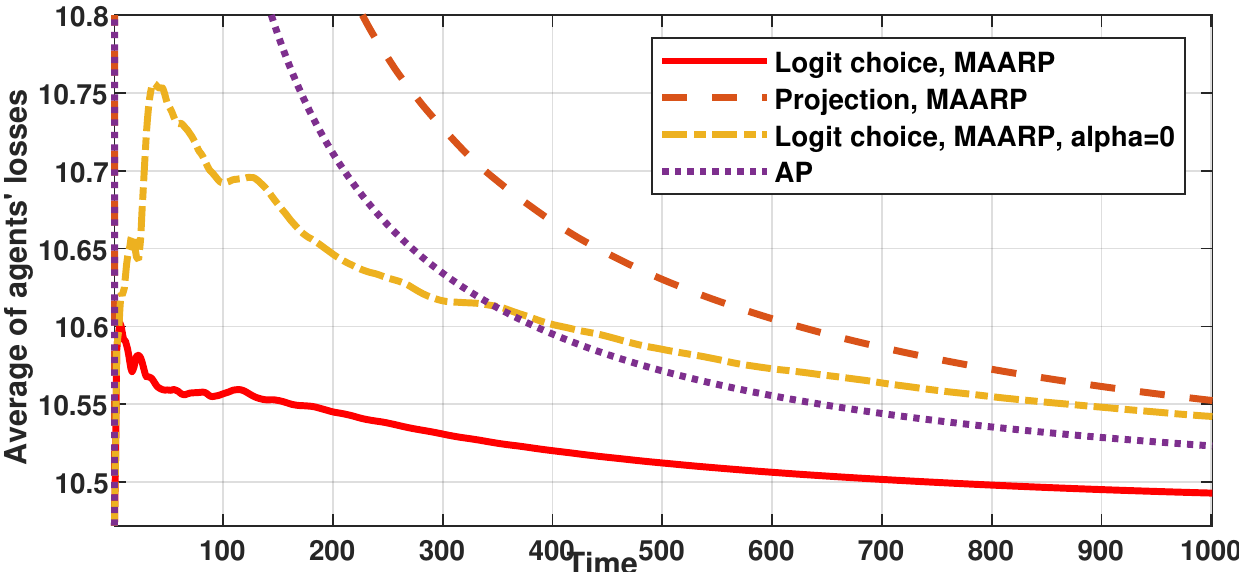}
	\caption{Comparison of the average agents' losses of MAARP with the state of the art.}
	\label{Fig:AVUtil}	
\end{figure}
\section{Summary, Discussion, Outlook, and Future Work}
In this work, we have proposed a novel decentralized pricing method that aims to encourage resource sustainable behavior in a population of selfish online learning agents having noisy first-order feedback by leading them toward a generalized Nash equilibrium of the game with corresponding coupled constraints.

The given results are based on the assumption that the utility functions and the constraint violation functions are continuously differentiable and that the utility function is strictly convex. However, it is straightforward to generalize the results in order to handle (not-necessarily continuously differentiable) convex function by replacing the gradients with subgradients, and by involving, in addition, the notion of convergence to a set (see \cite{Mertikopoulos2018}). The latter is necessary since the set of variational Nash equilibria $\SOL(\mathcal{Q},\bv)$ in the convex utility case is, in general, not a singleton. The simplification made in this work is only for the sake of readability. 

In the case that the martingale noise is persistent, the method gives a.s. convergence of the population's state to the generalized Nash equilibrium and consequently a.s. compliance of the resource constraints in the asymptotic limit for the rich class of polynomially decaying step-size policies of order $\gamma_{n}=\Theta(n^{-p})$ where $p\in(0,1]$. However, we were only able to give this guarantee for the ergodic average of the population iterates. For the actual population's iterate, we only could show the a.s. convergence for $p\in (1/2,1]$. Thus the case $p\in (0,1/2]$ remains open.

In the case where $\gamma_{n}=\Theta(n^{-1/2})$, we were able to provide in the persistent noise case, a non-asymptotic decaying expectation bound of order $\mathcal{O}(\ln^{3/2}(n)/\sqrt{n})$ for the amount of resource constraints violations caused by the ergodic average of the population's iterates. This decay rate matches, up to the $\ln$-factor, with the fundamental limit of the convergence speed described by the lower complexity bound for black-box subgradient methods (see Theorem 3.2.1 in \cite{Nesterov2018}). Thus we expect that our result is optimal.

Another interesting occurrence we observed, both in the theoretical and numerical investigations, is that the choice of the mirror map might have an impact on the dimensional dependence of the quality of MAARP. This effect is rarely considered in the literature of the Nash equilibrium finding since it mostly uses the Euclidean projection as the mapping which realizes the first-order update in the feasible strategy set (to name a few: \cite{Paccagnan2017,Tatarenko2019,Pavel2016}). In future work, we aim to exploit this aspect further.   

%% Acknowledgments---Will not appear in anonymized version
%
%%\acks{We thank a bunch of people.}
\appendix
In this appendix, we relate the concept of the Nash equilibrium of the NGCC to another alternative concepts, which is more suitable for algorithmic analysis. The results stated in this appendix are known and can be found, for instance in \cite{Facchinei1}.
\subsection{Nash Equilibrium \& Variational Inequality}
\label{Subsec:aaojshddhdhjff}
%This section aims to relate the concept of the Nash equilibrium of the NGCC to another alternative concept, which is more suitable for algorithmic analysis. For this sake, we introduce in the first part of this section the concept of variational inequality (VI), which is known to be a powerful and unifying method to study equilibrium problems both in infinite - and finite dimensions \cite{Facchinei1}. We will further see that the solutions to this inequality problem are Nash equilibriums. Furthermore, it is, in general, not comfortable working with a set that has no product structure, such as the feasible set of NGCC $\mathcal{Q}$. Thus in the second part of this section, we consider a Karush-Kuhn-Tucker-system-based technique to extend the corresponding VI to a VI on a set with a product structure (having possibly higher dimension) such that the solution structure remains preserved. The results stated in this chapter are known and can be found, for instance in \cite{Facchinei1}.
%\subsubsection{Nash Equilibrium \& Variational Inequality}
%\label{Subsec:aaojshddhdhjff}
%Rather than with the concept of Nash equilibrium, it is advantageous from the analytical point of view to work with the concept of the so-called variational inequality (VI):
The following concept is central to the analysis of first-order methods: 
\begin{definition}[Variational Inequality (VI)]
	\label{Def:VI}
	Let $\Z$ be a subset of a Euclidean normed space $\V$, and suppose that $\bg:\mathcal{Z}\rightarrow \V^{*}$. A point $\overline{\bx}\in\Z$ is a solution of the variational inequality $\text{VI}(\mathcal{Z},\bg)$, if $\left\langle \bx-\overline{\bx} ,\bg(\overline{\bx})\right\rangle\leq 0$, $\forall \bx\in\mathcal{Z}$.
	%\begin{equation*}
	%\left\langle \bx-\overline{\bx} ,\bg(\overline{\bx})\right\rangle\leq 0,\quad \forall \bx\in\mathcal{Z}.
	%\end{equation*}
	The set of solution of $\VI(\Z,\bg)$ is denoted by $\SOL(\Z,\bg)$.
\end{definition}
One helpful fact working with $\VI$ is that first-order methods tend to gain a drift toward the solution of it. For instance consider the iterate \eqref{Eq:aoaaosjsdhdjdhhddss} and $x_{*}\in\SOL(\X,\bv)$. If $\bv$ is monotone (which is fulfilled by Assumption \ref{Ass:aiishhfggfjhhdjjdhhdjjddd}), then it follows that:
\begin{equation}
\label{Eq:aaoaosjjshhdhdggshshsggssss}
\inn{\boldX_{k}-\bx_{*}}{\bv(\boldX_{k})}\leq \inn{\boldX_{k}-\bx_{*}}{\bv(\bx_{*}))}\leq 0,
\end{equation}
where the last inequality follows from the definition of $\VI$.
Thus the first-order feedback $v(\boldX_{k})$ for each time step $k+1$ forms an obtuse angle with the residual vector $X_{k}-\bx_{*}$, and consequently, $v(\boldX_{k})$ provides a direction toward $x_{*}$.

There is no burden working with $\SOL(\mathcal{Q},\bv)$ instead with $\GNE(\Gamma)$ since, as asserted in the following proposition, the solutions of $\VI(\mathcal{Q})$ is automatically a Nash equilibrium of $\Gamma$:
\begin{proposition}
	\label{Prop:aaoaojssjhhdjdhsss}
	%Suppose that for all $i\in[N]$, $\X_{i}$ is a non-empty convex set.
	%\begin{enumerate}
	%\item 
	%If for all $i\in [N]$, $\ru_{i}((\cdot),\bx^{(-i)})$ is %(pseudo-)concave 
	%      concave for all $x_{-i}\in \X_{-i}$, 
	If Assumption \ref{Ass:aiishhfggfjhhdjjdhhdjjddd} holds, then $\SOL (\mathcal{Q},\bv)\subseteq\GNE(\Gamma)$.
	%\item If for every $i\in [N]$, $g_{i}((\cdot),\by)$ is %(pseudo-)concave 
	%      concave for all $\by\in\mathcal{Y}$. Then the set of Wardrop equilibrium coincides with $\SOL (\X,G^{\Ward})$
	%\end{enumerate}
\end{proposition}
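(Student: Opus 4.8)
The plan is to take an arbitrary solution $\overline{\bx}\in\SOL(\mathcal{Q},\bv)$ and verify directly that it satisfies the defining inequality of a generalized Nash equilibrium, player by player. The key observation is that a \emph{unilateral} deviation of a single player $i$ that remains feasible produces a legitimate test point for the variational inequality, and that the inner product appearing in $\VI(\mathcal{Q},\bv)$ then collapses to a single block because all other coordinates are left unchanged.

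Concretely, I would fix $i\in[N]$ and an admissible deviation $\bx^{(i)}\in\mathcal{Q}^{(i)}(\overline{\bx}^{(-i)})$, i.e. $\bx^{(i)}\in\X_{i}$ with $\bg(\bx^{(i)},\overline{\bx}^{(-i)})\le 0$. First note that the point $\bx:=(\bx^{(i)},\overline{\bx}^{(-i)})$ then lies in $\mathcal{Q}=\mathcal{C}\cap\X$: it is in $\X$ since $\bx^{(i)}\in\X_{i}$ and $\overline{\bx}^{(-i)}\in\X_{-i}$, and it satisfies $\bg(\bx)\le 0$ by the very definition of $\mathcal{Q}^{(i)}$. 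Hence $\bx$ is an admissible test vector in $\VI(\mathcal{Q},\bv)$, and Definition \ref{Def:VI} gives $\inn{\bx-\overline{\bx}}{\bv(\overline{\bx})}\le 0$. Since $\bx$ and $\overline{\bx}$ agree in every block except the $i$-th, the difference $\bx-\overline{\bx}$ has only its $i$-th component $\bx^{(i)}-\overline{\bx}^{(i)}$ nonzero, so the inner product reduces to
\begin{equation*}
\inn{\bx^{(i)}-\overline{\bx}^{(i)}}{\bv^{(i)}(\overline{\bx})}\le 0 .
\end{equation*}

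The second step is to convert this first-order inequality into the required payoff inequality using Assumption \ref{Ass:aiishhfggfjhhdjjdhhdjjddd}. Since $\ru_{i}((\cdot),\overline{\bx}^{(-i)})$ is concave and differentiable with gradient $\bv^{(i)}(\overline{\bx})=\nabla_{\bx^{(i)}}\ru_{i}(\overline{\bx})$ at $\overline{\bx}^{(i)}$, the gradient characterization of concavity yields
\begin{equation*}
\ru_{i}(\bx^{(i)},\overline{\bx}^{(-i)})\le \ru_{i}(\overline{\bx}^{(i)},\overline{\bx}^{(-i)})+\inn{\bx^{(i)}-\overline{\bx}^{(i)}}{\bv^{(i)}(\overline{\bx})}.
\end{equation*}
Combining the two displays, the inner product term is nonpositive, whence $\ru_{i}(\bx^{(i)},\overline{\bx}^{(-i)})\le \ru_{i}(\overline{\bx}^{(i)},\overline{\bx}^{(-i)})$. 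As $i$ and the feasible deviation $\bx^{(i)}$ were arbitrary, $\overline{\bx}$ meets the definition of a GNE, and since $\overline{\bx}$ was an arbitrary VI solution, the inclusion $\SOL(\mathcal{Q},\bv)\subseteq\GNE(\Gamma)$ follows.

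I do not expect a genuine obstacle here; the argument is a direct unpacking of the two definitions glued together by concavity. The one point that deserves care---and is really the crux---is the first step: one must check that restricting the feasible set $\mathcal{Q}$ to the slice $\{(\cdot,\overline{\bx}^{(-i)})\}$ reproduces exactly the player-$i$ constraint set $\mathcal{Q}^{(i)}(\overline{\bx}^{(-i)})$, so that admissible unilateral deviations correspond precisely to admissible test directions of the VI. This is where the coupled-constraint structure enters, and it is the reason the VI is posed over $\mathcal{Q}$ rather than over a product set. (Note that the converse inclusion generally fails, which is exactly why $\SOL(\mathcal{Q},\bv)$ singles out the \emph{variational} subclass of $\GNE(\Gamma)$.)
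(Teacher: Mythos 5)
Your proof is correct and complete: a feasible unilateral deviation $(\bx^{(i)},\overline{\bx}^{(-i)})$ is a legitimate test point for $\VI(\mathcal{Q},\bv)$, the inner product collapses to the $i$-th block, and the first-order concavity inequality from Assumption \ref{Ass:aiishhfggfjhhdjjdhhdjjddd} converts this into the payoff inequality defining a GNE. The paper itself gives no proof of this proposition (it defers to the cited reference \cite{Facchinei1}), and your argument is precisely the standard one found there, so there is nothing to reconcile.
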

%The corresponding proof of the above statement is standard and based on first-order optimality condition for convex optimization. %However for sake of completeness, we provide it in Appendix \ref{App:oiodhhdhhjsjss}.  
%%\begin{remark}
%%	In the case where no coupling constraint is present, i.e., $\mathcal{C}=\mathcal{X}$, the converse of the above proposition holds. However, due to the coupling constraint, a Nash equilibrium has not to be a solution of the variational inequality. 
%%\end{remark}
%%This remark and 
Proposition \ref{Prop:aaoaojssjhhdjdhsss} motivate us to highlight the generalized Nash equilibrium solving the corresponding variational inequality and call it as \textit{variational Nash equilibrium}.

Another nice thing about VI is that under mild condition one can establish existence of its solution:
\begin{proposition}
	%[Existence and Uniqueness of the Solution of VI]%[Stampacchia 1966]
	\label{Prop:aoaosjsjshhdjdhdjd}
	Let $\mathcal{Z}$ be a non-empty subset of a Euclidean normed space $\V$ and $\bg:\Z\rightarrow \V^{*}$. 
	\begin{enumerate}
		\item If that $\Z$ is compact and convex, and if $\bg$ is continuous, then $\SOL(\Z,\bg)\neq \emptyset$.
		\item If $\SOL(\Z,\bg)\neq \emptyset$ and $\bg$ is strictly monotone on $\Z$, then $\SOL(\Z,\bg)$ is a singleton.
		%\begin{enumerate}
		%\item Then $\SOL(\Z,\bg)\neq \emptyset$.
		%\item If $\bg$ is in addition strictly monotone on $\Z$, 
		%then for all $\overline{\bx}\in\SOL(\Z,\bg)$ and $\bx\in\Z$, $\inn{\bg(\bx)}{\bx-\overline{\bx}}\leq 0$
		%\begin{equation*}
		%\inn{\bg(\bx)}{\bx-\overline{\bx}}\leq 0,
		%\end{equation*}
		%with equality if and only if $\bx=\overline{\bx}$. Moreover $\SOL(\Z,\bg)$ is a singleton.
		%%\item \textcolor{red}{[Since $\mathcal{Z}$ is compact, strict monotonicity implies strong monotonicity, see "Nash and Wardrop equilibria in aggregative games with coupling constraints" Lemma 2!]}
	\end{enumerate}
\end{proposition}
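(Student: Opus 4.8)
The plan is to treat the two assertions separately: I would establish existence (part~1) by recasting $\VI(\Z,\bg)$ as a fixed-point problem amenable to Brouwer's theorem, and uniqueness (part~2) by the standard two-point monotonicity argument. In both cases the one point requiring real care is the sign convention of Definition~\ref{Def:VI}, which is reversed relative to the usual VI literature because here $\bg$ plays the role of a concave-utility gradient (and ``monotone'' means $\inn{\bx_{1}-\bx_{2}}{\bg(\bx_{1})-\bg(\bx_{2})}\leq 0$).

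For the first part I would introduce the map $T:\Z\rightarrow\Z$, $T(\bx):=\bfPi_{\Z}(\bx+\bg(\bx))$, and observe that solutions of $\VI(\Z,\bg)$ are exactly its fixed points. Indeed, by the variational characterization of the Euclidean projection onto a closed convex set --- namely $\bz=\bfPi_{\Z}(\bw)$ iff $\inn{\bw-\bz}{\bx-\bz}\leq 0$ for all $\bx\in\Z$ --- the identity $\overline{\bx}=\bfPi_{\Z}(\overline{\bx}+\bg(\overline{\bx}))$ is equivalent to $\inn{\bg(\overline{\bx})}{\bx-\overline{\bx}}\leq 0$ for all $\bx\in\Z$, which is precisely the defining inequality. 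Since $\Z$ is compact and convex, since $\bg$ is continuous, and since the Euclidean projection onto a convex set is nonexpansive and hence continuous, $T$ is a continuous self-map of the compact convex set $\Z$. Brouwer's fixed-point theorem then furnishes a fixed point, so $\SOL(\Z,\bg)\neq\emptyset$.

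For the second part I would argue by contradiction. Suppose $\overline{\bx}_{1},\overline{\bx}_{2}\in\SOL(\Z,\bg)$ with $\overline{\bx}_{1}\neq\overline{\bx}_{2}$. Testing the defining inequality for $\overline{\bx}_{1}$ at $\bx=\overline{\bx}_{2}$ and the one for $\overline{\bx}_{2}$ at $\bx=\overline{\bx}_{1}$ gives
\begin{equation*}
\inn{\overline{\bx}_{2}-\overline{\bx}_{1}}{\bg(\overline{\bx}_{1})}\leq 0,\qquad \inn{\overline{\bx}_{1}-\overline{\bx}_{2}}{\bg(\overline{\bx}_{2})}\leq 0.
\end{equation*}
Adding these and regrouping the inner products yields $\inn{\overline{\bx}_{2}-\overline{\bx}_{1}}{\bg(\overline{\bx}_{1})-\bg(\overline{\bx}_{2})}\leq 0$, i.e.\ $\inn{\overline{\bx}_{1}-\overline{\bx}_{2}}{\bg(\overline{\bx}_{1})-\bg(\overline{\bx}_{2})}\geq 0$. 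This contradicts the strict monotonicity of $\bg$ on $\Z$, which forces $\inn{\overline{\bx}_{1}-\overline{\bx}_{2}}{\bg(\overline{\bx}_{1})-\bg(\overline{\bx}_{2})}<0$ whenever $\overline{\bx}_{1}\neq\overline{\bx}_{2}$. Hence $\overline{\bx}_{1}=\overline{\bx}_{2}$ and $\SOL(\Z,\bg)$ is a singleton.

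I expect the only genuinely delicate aspect to be the sign bookkeeping induced by the reversed convention: the projection reformulation must use $T(\bx)=\bfPi_{\Z}(\bx+\bg(\bx))$ with a plus sign rather than the textbook $\bfPi_{\Z}(\bx-\bg(\bx))$, and correspondingly the uniqueness contradiction surfaces as a reversed inequality. The topological input --- continuity of the projection together with Brouwer's theorem --- is entirely routine once the fixed-point reformulation is in place.
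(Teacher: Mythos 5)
Your proof is correct and follows essentially the same route the paper sketches: existence via the fixed-point reformulation $\overline{\bx}=\bfPi_{\Z}\left(\overline{\bx}+\bg(\overline{\bx})\right)$ combined with Brouwer's theorem, and uniqueness via the two-point strict-monotonicity contradiction, with the paper's reversed sign conventions for the VI and for monotonicity handled correctly. The only cosmetic difference is in part 2, where the paper phrases the argument through the chain inequality \eqref{Eq:StrongMon} while you add the two defining VI inequalities directly; the underlying argument is identical.
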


The proof of the first statement is based on the connection between $\VI$ and the fixed point of the Euclidean projection onto $\mathcal{Z}$. The latter can be described by means of Brouwer Fixed Point Theorem. The proof of second statement is closely related to the fact that if $\bg$ is strictly monotone and $\overline{\bx}\in\SOL(\Z,\bg)$, then:
\begin{equation}
\label{Eq:StrongMon}
\inn{\bx-\overline{\bx}}{\bg(\bx)}\leq\inn{\bx-\overline{\bx}}{\bg(\overline{\bx}}\leq 0,\quad\forall \bx\in\mathcal{Z},\bx\neq\overline{\bx}
\end{equation}
with equality if and only if $\bx=\overline{\bx}$.
% For convenience we provide the proof of Proposition \ref{Prop:aoaosjsjshhdjdhdjd} in Appendix \ref{App:oiodhhdhhjsjss}.
%\textcolor{red}{
%\begin{itemize}
%\item For the existence of $\SOL(\X,\bg)$ see Kinderlehrer, An introduction to variational inequality Theorem 3.1 (Condition $\X$ compact and convex, \bg Continuous)
%%Siehe Auch:https://www.genconv.org/files/Kaohsiung_Aussel.pdf
%\end{itemize}
%}

From here, it is immediate to infer the existence of a Nash equilibrium. Indeed, this is obvious by setting $\bg=\bv$ and $\Z=\mathcal{Q}$ in Proposition	\ref{Prop:aoaosjsjshhdjdhdjd} and by noticing that the solution of $\VI$ is a Nash equilibrium (Proposition \ref{Prop:aaoaojssjhhdjdhsss}). 
\subsection{Decoupling the Coupled Constraints via of Lagrangian}
\label{Subsec:Decoup}
In order to investigate $\VI(\mathcal{Q},v)$, it is convenient to extend $\VI(\mathcal{Q},v)$ to $\VI(\mathcal{X}\times \real^{R}_{+},\tilde{\bv})$, where $\tilde{\bv}:\mathcal{X}\times \real^{R}_{+}$,
\begin{equation}
\label{Eq:iaaiisjsjdhhdhdhdhdjjsss}
\tilde{\bv}:\mathcal{X}\times \real^{R}_{+}\rightarrow \real^{D+R}, (\bx,\boldlambda)\mapsto \left[\bv(\bx)-[\nabla \bg(\bx)]^{\T}\boldlambda, \bg(\bx)\right]^{\T}.
\end{equation}
The advantage of this method is the decoupling of the constraint set. Specifically, employing this procedure, we only have to work with the constraint set $\X\times \real^{R}_{\geq 0}$ with product structure rather than with $\mathcal{Q}$. 
%In the primal-dual space $\mathcal{X}\times \real_{\geq 0}^{R}$, we use the following inner product structure directly induced by $\X\subset \prod_{i}\real^{D_{i}}$ and $\real^{R}_{\geq 0}$:
%\begin{equation*}
%\inni{(\bx,\boldlambda)}{(\tilde{\bx},\tilde{\boldlambda})}_{\sim}:=\inni{\bx}{\tilde{\bx}}+\inn{\boldlambda}{\tilde{\boldlambda}},~\bx,\tilde{\bx}\in \prod_{i}\real^{D_{i}},~\boldlambda,\tilde{\boldlambda}\in\real^{R}.
%\end{equation*} 

Usual KKT argumentation asserts that $\VI(\mathcal{Q},\bv)$ and $\VI(\mathcal{X}\times \real^{R}_{+},\tilde{\bv})$ are essentially the same in the following sense:
\begin{proposition}
	\label{Prop:aiaisshshjdhdggddd}
	Suppose that the Assumptions \ref{Ass:aiishhfggfjhhdjjdhhdjjddd}, \ref{Ass:aiishhfggfjhhdjjdhhdjjddd2} and \ref{Eq:aioaoosjjshhdjjddss} holds. Then the following statements are equivalent:
	\begin{enumerate}
		\item $\overline{\bx}\in\mathcal{Q}$ is a solution of $\VI(\mathcal{Q},\bv)$
		\item There exists $\overline{\boldlambda}\in\real^{R}_{\geq 0}$ s.t. $(\overline{\bx},\overline{\boldlambda})$ is a solution of $\VI(\mathcal{X}\times \real^{R}_{+},\tilde{\bv})$. 
	\end{enumerate}
\end{proposition}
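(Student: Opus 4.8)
The plan is to exploit the product structure of the constraint set $\X\times\real^{R}_{\geq 0}$ to split the extended variational inequality into two independent conditions, and then to recognize these as the Karush--Kuhn--Tucker system associated with $\VI(\mathcal{Q},\bv)$. Writing out the defining inequality $\inn{(\bx,\boldlambda)-(\overline{\bx},\overline{\boldlambda})}{\tilde{\bv}(\overline{\bx},\overline{\boldlambda})}\leq 0$ and inserting \eqref{Eq:iaaiisjsjdhhdhdhdhdjjsss}, statement~2 is equivalent to the simultaneous validity of
\begin{align}
&\inn{\bx-\overline{\bx}}{\bv(\overline{\bx})-[\nabla\bg(\overline{\bx})]^{\T}\overline{\boldlambda}}\leq 0\quad\forall\bx\in\X,\label{Eq:planA}\\
&\inn{\boldlambda-\overline{\boldlambda}}{\bg(\overline{\bx})}\leq 0\quad\forall\boldlambda\in\real^{R}_{\geq 0},\label{Eq:planB}
\end{align}
because setting $\boldlambda=\overline{\boldlambda}$ isolates \eqref{Eq:planA} and setting $\bx=\overline{\bx}$ isolates \eqref{Eq:planB}, while conversely adding the two recovers the joint inequality. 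A short computation over the orthant shows that \eqref{Eq:planB} is itself equivalent to the triple $\bg(\overline{\bx})\leq 0$ (insert $\boldlambda=\overline{\boldlambda}+\be_{r}$), $\overline{\boldlambda}\geq 0$ (given), and complementary slackness $\inn{\overline{\boldlambda}}{\bg(\overline{\bx})}=0$ (insert $\boldlambda=0$ and $\boldlambda=2\overline{\boldlambda}$). Thus statement~2 is nothing but the KKT system, and it remains to relate this to $\VI(\mathcal{Q},\bv)$ in both directions.

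For the implication $2\Rightarrow 1$ I would assume \eqref{Eq:planA}--\eqref{Eq:planB} and verify the defining inequality of $\VI(\mathcal{Q},\bv)$ directly. First, $\overline{\bx}\in\mathcal{Q}$ since $\overline{\bx}\in\X$ and $\bg(\overline{\bx})\leq 0$. Then, for arbitrary $\bx\in\mathcal{Q}$, \eqref{Eq:planA} gives $\inn{\bx-\overline{\bx}}{\bv(\overline{\bx})}\leq\inn{\bx-\overline{\bx}}{[\nabla\bg(\overline{\bx})]^{\T}\overline{\boldlambda}}=\sum_{r}\overline{\boldlambda}_{r}\inn{\nabla\bg_{r}(\overline{\bx})}{\bx-\overline{\bx}}$. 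Convexity of each $\bg_{r}$ (Assumption \ref{Ass:aiishhfggfjhhdjjdhhdjjddd2}) bounds $\inn{\nabla\bg_{r}(\overline{\bx})}{\bx-\overline{\bx}}\leq\bg_{r}(\bx)-\bg_{r}(\overline{\bx})$; summing against $\overline{\boldlambda}\geq 0$ and using $\inn{\overline{\boldlambda}}{\bg(\overline{\bx})}=0$ yields $\inn{\bx-\overline{\bx}}{\bv(\overline{\bx})}\leq\inn{\overline{\boldlambda}}{\bg(\bx)}\leq 0$, the last inequality because $\overline{\boldlambda}\geq 0$ and $\bg(\bx)\leq 0$ on $\mathcal{Q}$. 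This is exactly $\overline{\bx}\in\SOL(\mathcal{Q},\bv)$.

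The implication $1\Rightarrow 2$ is where the real work lies and where Slater's condition enters. Since $\overline{\bx}\in\SOL(\mathcal{Q},\bv)$ reads $\inn{\bv(\overline{\bx})}{\bx}\leq\inn{\bv(\overline{\bx})}{\overline{\bx}}$ for all $\bx\in\mathcal{Q}$, the point $\overline{\bx}$ maximizes over $\mathcal{Q}$ the linear (hence concave) functional $\bx\mapsto\inn{\bv(\overline{\bx})}{\bx}$ subject to the convex constraints $\bg(\bx)\leq 0$ and $\bx\in\X$. Because Assumption~\ref{Eq:aioaoosjjshhdjjddss} furnishes a strictly feasible point in $\textnormal{relint}(\X)$, the convex KKT theorem applies and produces a multiplier $\overline{\boldlambda}\in\real^{R}_{\geq 0}$ with $\inn{\overline{\boldlambda}}{\bg(\overline{\bx})}=0$ for which $\overline{\bx}$ maximizes the Lagrangian $\bx\mapsto\inn{\bv(\overline{\bx})}{\bx}-\inn{\overline{\boldlambda}}{\bg(\bx)}$ over $\X$. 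The first-order optimality condition for this concave maximization over the convex set $\X$ is precisely \eqref{Eq:planA}, while primal feasibility and complementary slackness reproduce \eqref{Eq:planB}; together they reconstitute statement~2.

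I expect the main obstacle to be exactly the existence of the multiplier $\overline{\boldlambda}$ in $1\Rightarrow 2$: producing it rigorously requires a constraint qualification, and that is the role of Slater's condition, without which the normal cone of $\mathcal{Q}$ at $\overline{\bx}$ need not decompose as $N_{\X}(\overline{\bx})+\sum_{r}\real_{\geq 0}\nabla\bg_{r}(\overline{\bx})$. All the remaining steps are routine inner-product manipulations together with the convexity inequality. Since the statement is classical (see \cite{Facchinei1}), I would keep the exposition brief and cite the Lagrange-multiplier/KKT theorem for the existence step rather than reprove it.
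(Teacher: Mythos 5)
Your proposal is correct and takes essentially the same route the paper intends: the paper gives no detailed proof, stating only that ``usual KKT argumentation'' yields the equivalence and deferring to \cite{Facchinei1}, and your argument --- decomposing the VI over the product set $\X\times\real^{R}_{\geq 0}$ into the stationarity condition plus the orthant condition, identifying the latter with primal feasibility and complementary slackness, then using convexity of $\bg_{r}$ for sufficiency and Slater's condition with the convex KKT theorem for necessity --- is precisely that standard argumentation, with all steps valid.
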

%A proof of this statement is provided in Appendix \ref{App:aoaosjjshshhdhdshsjsss}.
%The proof is standard KKT argumentation and based on e.g. 1.3.4 Proposition in \citet{Facchinei1} (see also Subsubsection 4.3.2.2 in \citet{Scutari2012}).
%%\begin{proof}
%%The KKT condition for $\VI(\mathcal{Q},\bv)$ is:
%%\begin{align*}
%%\bg(\bx)-\bfA^{T}\boldlambda=0,~0\geq\boldlambda\bot \bg(\bx)\leq 0.
%%\end{align*}
%%and for
%%\end{proof}
%\textcolor{red}{See http://www.ece.ust.hk/~palomar/ELEC5470_lectures/extra/VI_and_CR.pdf pp 48}
%Proposition \ref{Prop:aiaisshshjdhdggddd} tells us that in order to find a variational Nash equilibrium of $\Gamma$ (in case it exists), it is sufficient to find the solution of $\VI(\mathcal{X}\times \real^{R}_{+},\tilde{\bv})$. Moreover, if $v$ is additionaly strictly monotone, we can strengthen the statement given in that proposition as follows:
\begin{proposition}
	\label{Prop:aiaisshshjdhdggddd3}
	Suppose that Assumptions \ref{Ass:aiishhfggfjhhdjjdhhdjjddd}, \ref{Ass:aiishhfggfjhhdjjdhhdjjddd2} and \ref{Eq:aioaoosjjshhdjjddss} hold. If $v$ is strictly monotone, then there exists a unique solution $\overline{\bx}$ of $\VI(\mathcal{Q},\bv)$ and $\overline{\boldlambda}\in\real^{R}_{\geq 0}$ s.t. $(\overline{\bx},\overline{\boldlambda})$ is a unique solution of $\VI(\mathcal{X}\times \real^{R}_{\geq 0},\tilde{\bv})$.
\end{proposition}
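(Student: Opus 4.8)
The plan is to assemble the statement from the two structural facts already available: the generic existence/uniqueness theorem for variational inequalities (Proposition \ref{Prop:aoaosjsjshhdjdhdjd}) and the KKT-type equivalence (Proposition \ref{Prop:aiaisshshjdhdggddd}). First I would settle the primal claim. The feasible set $\mathcal{Q}=\mathcal{C}\cap\mathcal{X}$ is convex, since $\mathcal{X}$ is convex and each sublevel set $\{\bg_{r}\leq 0\}$ is convex by the convexity of $\bg_{r}$ (Assumption \ref{Ass:aiishhfggfjhhdjjdhhdjjddd2}); it is compact as a closed subset of the compact set $\mathcal{X}$ (closedness following from continuity of $\bg$); and it is non-empty by Slater's condition (Assumption \ref{Eq:aioaoosjjshhdjjddss}), which supplies a point with $\bg(\bx_{*})<0$. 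Since $\bv$ is continuous (Assumption \ref{Ass:aiishhfggfjhhdjjdhhdjjddd}), Proposition \ref{Prop:aoaosjsjshhdjdhdjd}(1) gives $\SOL(\mathcal{Q},\bv)\neq\emptyset$, and since $\bv$ is strictly monotone, part~(2) of the same proposition upgrades this to a unique solution $\overline{\bx}$.

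Next, existence of the dual component is immediate. Assumptions \ref{Ass:aiishhfggfjhhdjjdhhdjjddd}--\ref{Eq:aioaoosjjshhdjjddss} let me invoke Proposition \ref{Prop:aiaisshshjdhdggddd}, so from $\overline{\bx}\in\SOL(\mathcal{Q},\bv)$ I obtain some $\overline{\boldlambda}\in\real^{R}_{\geq 0}$ with $(\overline{\bx},\overline{\boldlambda})\in\SOL(\mathcal{X}\times\real^{R}_{\geq 0},\tilde{\bv})$. The same proposition also disposes of half of the uniqueness: if $(\bx',\boldlambda')$ is any solution of the extended VI, then its primal part $\bx'$ solves $\VI(\mathcal{Q},\bv)$, whence $\bx'=\overline{\bx}$ by the primal uniqueness just established. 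Thus the solution set of $\VI(\mathcal{X}\times\real^{R}_{\geq 0},\tilde{\bv})$ has the form $\{\overline{\bx}\}\times\Lambda_{*}$ for some set $\Lambda_{*}\subseteq\real^{R}_{\geq 0}$ of admissible multipliers.

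The \textbf{main obstacle} is showing that $\Lambda_{*}$ reduces to a single point. One would like to apply Proposition \ref{Prop:aoaosjsjshhdjdhdjd}(2) directly to $\tilde{\bv}$, but this fails: expanding $\langle \bz_{1}-\bz_{2},\tilde{\bv}(\bz_{1})-\tilde{\bv}(\bz_{2})\rangle$ for the operator $\tilde{\bv}$ of \eqref{Eq:iaaiisjsjdhhdhdhdhdjjsss}, splitting the cross terms per constraint and applying the subgradient inequality to each convex $\bg_{r}$, shows that $\tilde{\bv}$ is monotone and that the inner product is strictly negative only when the primal parts $\bx_{1},\bx_{2}$ differ. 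Hence $\tilde{\bv}$ is not strictly monotone along the dual directions, and Proposition \ref{Prop:aoaosjsjshhdjdhdjd}(2) does not apply. Because monotone variational inequalities have convex solution sets, $\Lambda_{*}$ is at least a convex subset of $\real^{R}_{\geq 0}$, but convexity alone does not force a singleton.

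To close this last step I would characterize $\Lambda_{*}$ by the KKT system attached to $\overline{\bx}$, namely the stationarity relation $\bv(\overline{\bx})=\nabla\bg(\overline{\bx})^{\T}\boldlambda$ together with $\boldlambda\geq 0$ and complementary slackness, and observe that this system has a unique nonnegative solution exactly when the gradients of the active constraints at $\overline{\bx}$ are linearly independent. Slater's condition alone is insufficient here (one can already exhibit two proportional active constraints for which $\Lambda_{*}$ is a nondegenerate segment), so genuine uniqueness of the pair requires strengthening Assumption \ref{Eq:aioaoosjjshhdjjddss} to such a constraint qualification. This qualification is in particular satisfied by the full-rank affine constraints $\bg(\bx)=\bfA\rmh(\bx)-\mathbold{b}$ employed later in the paper, and under it the stationarity system pins down $\overline{\boldlambda}$ uniquely, making $(\overline{\bx},\overline{\boldlambda})$ the unique solution of the extended variational inequality.
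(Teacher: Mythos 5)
Your proposal cannot be compared line-by-line against the paper's own argument for a simple reason: the paper never proves this proposition. The appendix declares its results "known" and defers to \cite{Facchinei1}, so the only check available is whether your assembly is sound — and it is, in every step that can actually be carried out. Deriving existence of $\overline{\bx}$ from compactness, convexity and non-emptiness of $\mathcal{Q}$ together with continuity of $\bv$ via Proposition \ref{Prop:aoaosjsjshhdjdhdjd}(1), primal uniqueness from strict monotonicity via Proposition \ref{Prop:aoaosjsjshhdjdhdjd}(2), and then invoking Proposition \ref{Prop:aiaisshshjdhdggddd} both to produce a multiplier $\overline{\boldlambda}$ and to force the primal part of every solution of the extended VI to coincide with $\overline{\bx}$, is exactly the route these propositions are designed for. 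Your computation showing that $\tilde{\bv}$ of \eqref{Eq:iaaiisjsjdhhdhdhdhdjjsss} is monotone but never strictly monotone along the $\boldlambda$-directions is also correct, so Proposition \ref{Prop:aoaosjsjshhdjdhdjd}(2) indeed cannot close the argument.

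Your main critical point is genuine, not a gap in your own proof: under the stated hypotheses the dual component is not unique, so the proposition over-claims. Slater's condition guarantees that the multiplier set $\Lambda_{*}$ is non-empty, convex and bounded, but not a singleton, and your "two proportional active constraints" scenario can be made completely explicit. Take $N=1$, $\X=[-1,1]$, $\ru_{1}(x)=x-x^{2}/2$ (so $\bv(x)=1-x$ is strictly monotone and concavity holds), $\bg_{1}(x)=x$, $\bg_{2}(x)=2x$. Slater holds at $x=-1/2$, the unique solution of $\VI(\mathcal{Q},\bv)$ is $\overline{x}=0$, and $(0,\boldlambda)$ solves $\VI(\X\times\real_{\geq 0}^{2},\tilde{\bv})$ precisely when $\boldlambda\geq 0$ and $\boldlambda_{1}+2\boldlambda_{2}=1$ — a nondegenerate segment, as you predicted. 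So the uniqueness of the pair does require a constraint qualification beyond Assumption \ref{Eq:aioaoosjjshhdjjddss}, e.g.\ linear independence of the active constraint gradients at $\overline{\bx}$, under which your KKT stationarity argument pins down $\overline{\boldlambda}$. This is worth flagging beyond the proposition itself: Lemma \ref{Lem:aoaosjsjshdhdjshssjjddd}, Lemma \ref{Lem:aaosjjshhdhdhdhd} and Theorem \ref{Thm:aoasjsjsjdhdjshshs} all argue with "the" unique $\bz_{*}\in\SOL(\X\times\real_{\geq 0}^{R},\tilde{\bv})$, so without a qualification of the kind you propose those statements would need to be rephrased as convergence to the (convex, compact) solution set rather than to a point.
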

%\begin{proof}
%Proposition \ref{Prop:aiaisshshjdhdggddd} ensures the existence of such a pair $(\overline{\bx},\overline{\boldlambda})$. Moreover, since $v$ is strictly monotone, it follows from Proposition \ref{Prop:aoasjjsshdjdhdhddssssaaaa} that $\tilde{\bv}$ is strictly monotone. Consequently Proposition \ref{Prop:aoaosjsjshhdjdhdjd} ensures the uniqueness of $\VI(\mathcal{X}\times \real^{R}_{\geq 0},\tilde{\bv})$
%\end{proof}

%\textcolor{red}{[To be added:
%\begin{itemize}
%\item If for all $i\in[N]$ and $\bx^{(-i)}\in\mathcal{X}_{i}$, $\ru_{i}((\cdot),\bx^{(-i)})$ is strictly convex, then $v$ is strictly monotone.
%\item $v$ strictly monotone on $\mathcal{X}$ implies that $\tilde{\bv}$ is strictly monotone on $\mathcal{X}\times \real^{R}_{\geq 0}$.
%\item $\tilde{\bv}$ strictly monotone implies that for the solution $\overline{z}:=(\overline{\bx},\overline{\boldlambda})$ of $\VI(\X\times\real_{\geq 0}^{R},\tilde{\bv})$:
%      \begin{equation*}
%			\inni{z-\overline{z}}{\tilde{\bv}(\bz)}\leq 0,\quad\forall z\in\X\times\real_{\geq 0}^{R},
%			\end{equation*}
%			with inequality if and only if $\bz=\overline{z}$. Thus for arbitrary neighborhood $U$ of $\overline{z}$, there exists $c>0$ s.t.:
%			\begin{equation*}
%			\inni{z-\overline{z}}{\tilde{\bv}(\bz)}< -c,\quad\forall z\notin U.
%			\end{equation*}
%\end{itemize}
%}
%\bibliographystyle{unsrt}
  \bibliographystyle{IEEEtran}
\bibliography{BibWard}
\vskip 0pt plus -1fil
\begin{IEEEbiography}[{\includegraphics[width=1in,height=1.25in,clip,keepaspectratio]{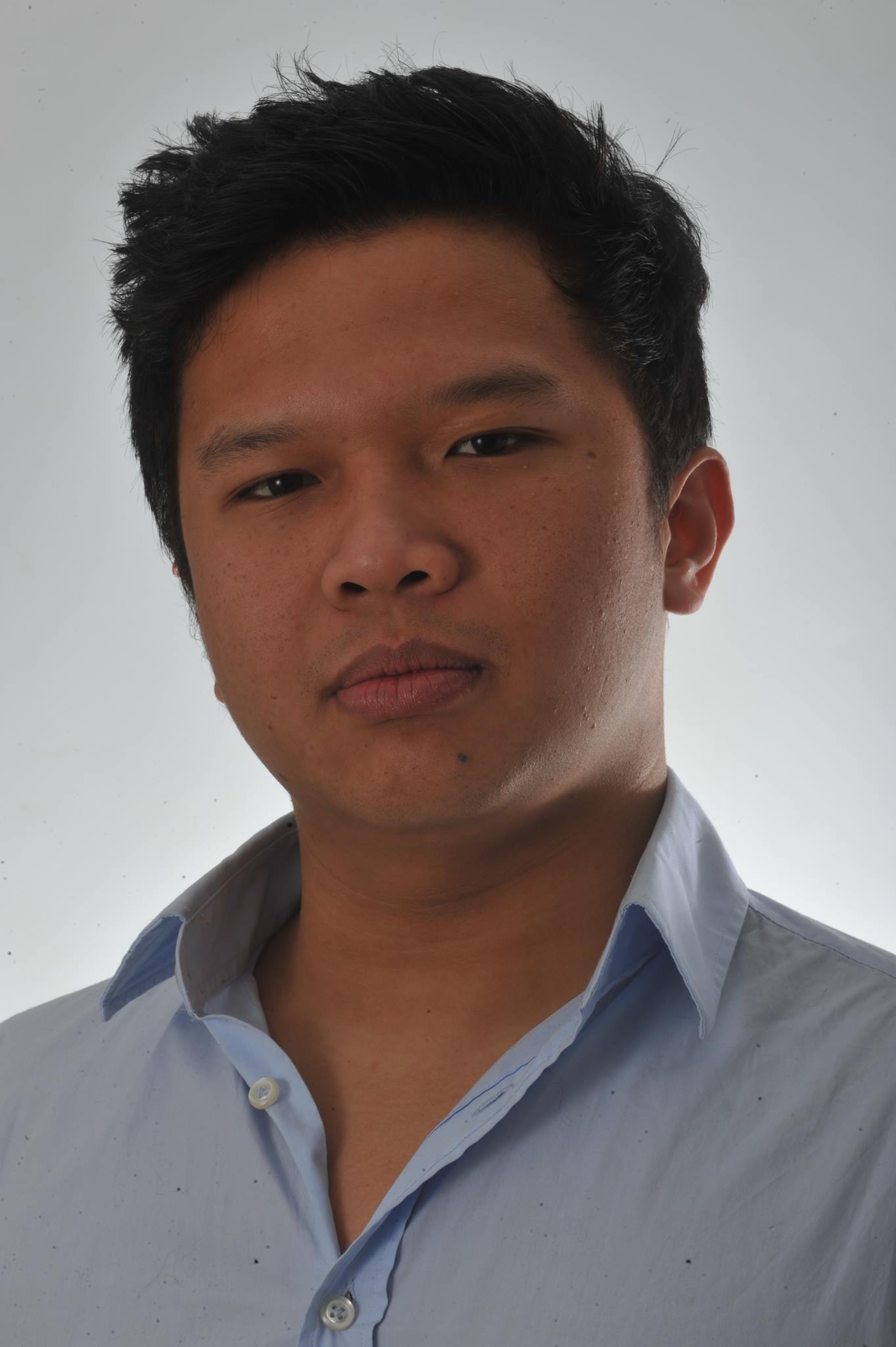}}]{Ezra Tampubolon}
	(S'16) is currently a Ph.D. candidate in the Department of Electrical and Computer Engineering at the Technische Universit{\"a}t M{\"u}nchen, Munich, Germany. He received the B.Sc. degree in 2013 and the M.Sc. degree in 2015 in Electrical and Computer Engineering from the Technische Universit{\"a}t M{\"u}nchen. His research interests lie in the area of stochastic optimization, distributed algorithms, and their applications to the machine learning, signal processing and communications.   
\end{IEEEbiography}
\vskip 0pt plus -1fil
\begin{IEEEbiography}[{\includegraphics[width=1in,height=1.25in,clip,keepaspectratio]{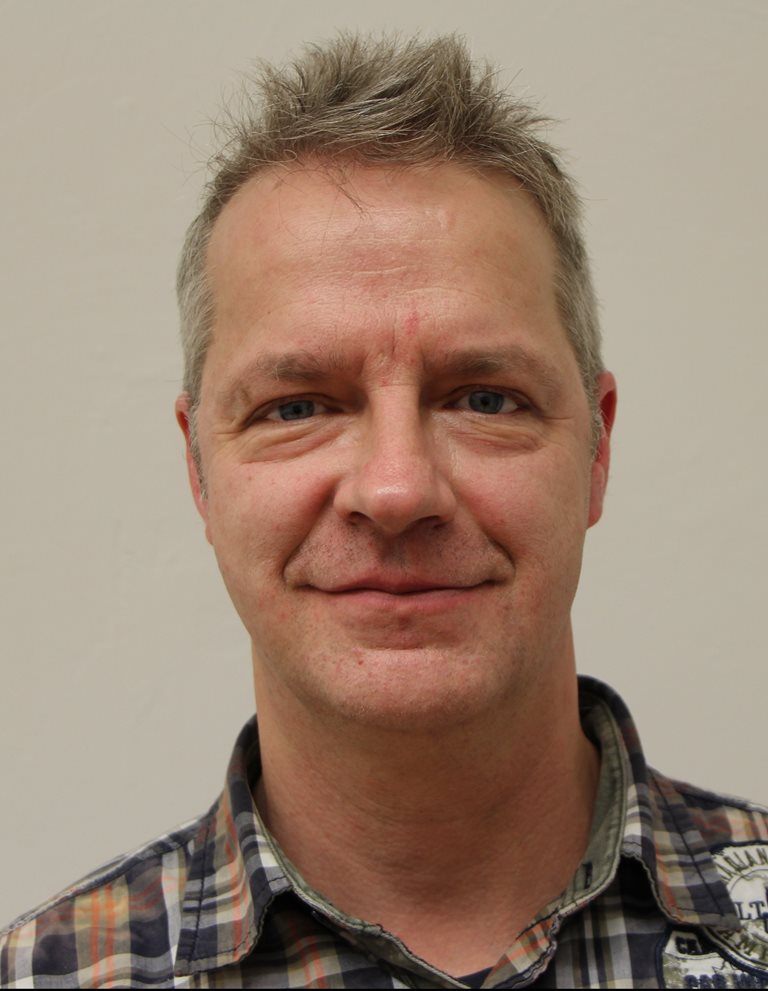}}]{Holger
Boche}
(M'04 - SM'07 - F'11) received the Dipl.-Ing.\ and Dr.-Ing.\ degrees in electrical engineering from the Technische Universit{\"a}t Dresden, Dresden, Germany, in 1990 and 1994, respectively. He graduated in mathematics from the Technische Universit{\"a}t Dresden in 1992. From
1994 to 1997, he did Postgraduate studies in mathematics at the Friedrich-Schiller Universit{\"a}t Jena, Jena, Germany. He received his Dr. rer. nat. degree in pure mathematics from the Technische Universit{\"a}t Berlin, Berlin, Germany, in 1998. In 1997, he joined the Heinrich-Hertz-Institut (HHI) f{\"u}r Nachrichtentechnik Berlin, Berlin, Germany. Starting in 2002, he was a Full Professor for mobile communication networks with the Institute for Communications Systems, Technische Universit{\"a}t Berlin. In 2003, he became Director of the Fraunhofer German-Sino Lab for Mobile Communications, Berlin, Germany, and in 2004 he became the Director of the Fraunhofer Institute for Telecommunications (HHI), Berlin, Germany. Since October 2010 he has been with the Institute of Theoretical Information Technology and Full Professor at the Technische Universit{\"a}t M{\"u}nchen, Munich, Germany. Since 2014 he has been a member and honorary fellow of the TUM Institute for Advanced Study, Munich, Germany. He was a Visiting Professor with the ETH Zurich, Zurich, Switzerland, during the 2004 and
2006 Winter terms, and with KTH Stockholm, Stockholm, Sweden, during the
2005 Summer term. Prof. Boche is a Member of IEEE Signal Processing Society SPCOM and SPTM Technical Committee. He was elected a Member of the German Academy of Sciences (Leopoldina) in 2008 and of the Berlin Brandenburg Academy of Sciences and Humanities in 2009. He received the Research Award ``Technische Kommunikation'' from the Alcatel SEL Foundation in October 2003, the ``Innovation Award'' from the Vodafone Foundation in June 2006, and the Gottfried Wilhelm Leibniz Prize from the Deutsche Forschungsgemeinschaft (German Research Foundation) in 2008. He was co-recipient of the 2006 IEEE Signal Processing Society Best Paper Award and recipient of the 2007 IEEE Signal Processing Society Best Paper Award. He was the General Chair of the Symposium on Information Theoretic Approaches to Security and Privacy at IEEE GlobalSIP 2016. Among his publications is the recent book Information Theoretic Security and Privacy of Information Systems (Cambridge University Press).
\end{IEEEbiography}

\end{document}